\newcolumntype{L}[1]{>{\raggedright\let\newline\\\arraybackslash\hspace{0pt}}m{#1}}
\newcolumntype{C}[1]{>{\centering\let\newline\\\arraybackslash\hspace{0pt}}m{#1}}
\newcolumntype{R}[1]{>{\raggedleft\let\newline\\\arraybackslash\hspace{0pt}}m{#1}}
\newtheorem{theorem}{Theorem}[section]
\newtheorem{prop}[theorem]{Proposition}
\newtheorem{lemma}[theorem]{Lemma}
\newtheorem{conj}[theorem]{Conjecture}
\newtheorem{hyp}{Hypothesis}
\newtheorem*{theorem*}{Theorem}
\newtheorem*{conj*}{Conjecture}
\newtheorem*{qn*}{Question}
\theoremstyle{definition}
\newtheorem{defi}{Definition}
\newtheorem{remark}[defi]{Remark}
\theoremstyle{remark}
\numberwithin{equation}{section}
\title{Almost primes in various settings}
\date{}
\author{Pawe\l  ~Lewulis\thanks{Supported by NCN Preludium 11, 2016/21/N/ST1/02599.}}
\begin{document}

\maketitle

\renewcommand{\thepage}{\roman{page}}

\selectlanguage{english}
\begin{abstract}
Let $k \geq 3$ and let $L_i(n) = A_in + B_i$ be some linear forms such that $A_i$ and $B_i$ are integers. Define ${\mathcal{P}(n) = \prod_{i=1}^k L_i(n)}$. For each $k$ it is known that $\Omega (\mathcal{P} (n) ) \leq \rho_k$ infinitely often for some integer $\rho_k$. We improve the possible values of $\rho_k$ for $4 \leq k \leq 10$ assuming $GEH$. We also show that we can take $\rho_5=14$ unconditionally. As a by-product of our approach we reprove the $\rho_3=7$ result which was previously obtained by Maynard who used techniques specifically designed for this case. 
\end{abstract}

\renewcommand{\thepage}{\roman{page}}

\selectlanguage{english}

\renewcommand{\thetable}{\Alph{table}} 


\section{Introduction}
\renewcommand{\thepage}{\arabic{page}}
\setcounter{page}{1}

\subsection{State of the art}

One of the most famous problems concerning prime numbers is the so called twin prime conjecture, which can be stated in the following form:
\begin{conj}\label{TPconj}
There are infinitely many primes $p$ such that $p+2$ is also a prime. 
\end{conj}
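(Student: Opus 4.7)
The plan is to attack the conjecture via sieve methods, which have historically been the most fruitful tools for such problems. First I would set up a sifting function $S(\mathcal{A}, \mathcal{P}, z)$ counting the integers $n \leq x$ for which neither $n$ nor $n+2$ has a prime factor below $z$, aiming to show $S > 0$ for a suitable $z$ close to $x^{1/2}$. A direct application of the Selberg upper bound sieve already yields the classical estimate that the number of twin primes up to $x$ is $O(x / \log^2 x)$, but this is only an upper bound; extracting a matching positive lower bound is the crux of the difficulty.

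A second, more modern line of attack would be to embed the problem in the Dickson--Hardy--Littlewood framework used throughout the rest of the paper. Taking the admissible tuple $\mathcal{H} = (h_1, h_2) = (0, 2)$ and Maynard--Tao multidimensional sieve weights
\[
w_n \;=\; \Bigl( \sum_{\substack{d_1, d_2 \\ d_i \mid n + h_i}} \lambda_{d_1, d_2} \Bigr)^{\!2},
\]
one would try to show that
\[
\sum_{x \leq n < 2x} \bigl( \mathbf{1}_{\mathbb{P}}(n) + \mathbf{1}_{\mathbb{P}}(n+2) - 1 \bigr)\, w_n \;>\; 0
\]
for all sufficiently large $x$, which would force both $n$ and $n+2$ to be prime infinitely often. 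This is precisely the mechanism by which Maynard established bounded gaps between primes, but for $k = 2$ the associated variational problem falls well short of the threshold needed to guarantee two primes in the tuple, and this deficit is not closed even under $GEH$.

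The fundamental obstacle, and the reason the conjecture has resisted every sieve-theoretic attack for more than a century, is Selberg's \emph{parity problem}: any sieve that weights integers with an even number of prime factors in the same way as those with an odd number cannot distinguish primes from $P_2$-numbers, and so can at best produce results of Chen type, $\Omega(n(n+2)) \leq 3$ infinitely often, rather than the required $\Omega(n(n+2)) = 2$. A complete proof would therefore need either to break the parity barrier -- for which no general technique is known -- or to inject arithmetic input strictly stronger than $GEH$, such as an unconditional level of distribution exceeding $1$, and even then further new ideas would be required. My honest expectation is that within the scope of the present paper the twin prime conjecture is stated as motivation only, and the quantitative contributions summarised in the abstract concern the weaker but tractable almost-prime bounds $\rho_k$ rather than the conjecture itself.
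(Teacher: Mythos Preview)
Your assessment is correct: the paper does not prove this statement. It is stated explicitly as a \emph{conjecture} (Conjecture~\ref{TPconj}) and immediately followed by the remark that ``This statement seems to be unachievable by current techniques and can be considered as a major target of studies in analytic number theory.'' The paper then cites Chen's theorem (Theorem~\ref{chenthm}) as the best known approximation and moves on to the bounded-gaps story and the $\rho_k$ results that form the actual content of the work. So there is no proof in the paper to compare against; your expectation in the final paragraph is exactly right.

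Your diagnosis of \emph{why} a proof is out of reach is also accurate and matches the spirit of the paper's discussion. The parity problem is indeed the obstruction, and the paper alludes to it when noting that the bound $\liminf (p_{n+1}-p_n) \leq 6$ under $GEH$ ``reaches a limit of what is potentially provable by the sieve techniques due to the parity problem.'' One small sharpening: in your Maynard--Tao sketch, the relevant variational quantity $M_k$ for $k=2$ would need to exceed $2$ to force both members of the pair to be prime, whereas even the $GEH$-level inputs only push $M_k$ past $1$ for small $k$ (yielding one prime in the tuple, hence bounded gaps). This is consistent with what you wrote but worth stating precisely.
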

This statement seems to be unachievable by current techniques and can be considered as a major target of studies in analytic number theory. In \cite{Chen} J. Chen proved the following result which can be seen as a weak variation of Conjecture \ref{TPconj}.
\begin{theorem}\label{chenthm}
There are infinitely many primes $p$ such that $p+2$ has at most two prime factors, each greater than $p^{1/10}$. 
\end{theorem}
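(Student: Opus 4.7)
The plan is to apply a weighted linear sieve to the sifting set $\mathcal{A} = \{p+2 : p \leq x\}$, combined with Chen's switching principle. Setting $z = x^{1/10}$ and $P(z) = \prod_{p < z} p$, one considers the sifting function
\[
S(\mathcal{A}, z) := \#\{n \in \mathcal{A} : (n, P(z)) = 1\}.
\]
Each $n$ counted by $S(\mathcal{A}, z)$ has all prime factors $\geq z$, and since $n \leq x + 2$ it has at most nine of them. To isolate the favourable configurations I would introduce the Chen-type weight
\[
W(n) = 1 - \tfrac{1}{2} \sum_{\substack{q \mid n,\ q \text{ prime}\\ x^{1/10} \leq q < x^{1/3}}} 1,
\]
and study $T(x) := \sum_{n} W(n)$, the sum being over $n \in \mathcal{A}$ coprime to $P(z)$. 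A short case analysis shows that $W(n) > 0$ only when $n$ is $P_2$ or when $n = q_1 q_2 q_3$ with $q_1 \in [x^{1/10}, x^{1/3})$ and $q_2, q_3 \geq x^{1/3}$; the latter ``bad'' configuration contributes exactly $1/2$, while all other multi-factor survivors contribute $\leq 0$. Letting $\mathcal{B}(x)$ denote the set of primes $p \leq x$ whose shift $p+2$ has such a bad factorisation, it therefore suffices to prove the strict inequality $T(x) > \tfrac{1}{2} \# \mathcal{B}(x)$ for all large $x$.

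The main term $T(x)$ is estimated via the Jurkat--Richert linear sieve combined with the Bombieri--Vinogradov theorem, which supplies level of distribution $\tfrac{1}{2} - \varepsilon$ for $\mathcal{A}$. One obtains
\[
S(\mathcal{A}, z) \geq \bigl( f(s) - o(1) \bigr) \frac{C x}{\log^{2} x}, \qquad s = \frac{\log x^{1/2}}{\log z} = 5,
\]
where $C$ is the usual singular series for the pair $\{n, n+2\}$ and $f$ is the standard linear-sieve lower-bound function. The weighted middle sum is handled by applying the upper-bound linear sieve to each subsequence $\mathcal{A}_q = \{n \in \mathcal{A} : q \mid n\}$ for fixed $q \in [x^{1/10}, x^{1/3})$ and summing over $q$ by Mertens' theorem.

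The hard step is controlling $\# \mathcal{B}(x)$ from above. A direct Selberg sieve in the three variables $q_1, q_2, q_3$ has the wrong dimension and produces a bound too large to be useful. Chen's switching principle circumvents this: one rewrites the constraint $p = q_1 q_2 q_3 - 2$ as a statement about primes in the progression $-2 \pmod{d}$ with $d = q_1 q_2 < x^{2/3 - \varepsilon}$, switches the sifting roles of $p$ and the largest factor $q_3$, and obtains an upper bound for $\# \mathcal{B}(x)$ of the shape $\lambda C x / \log^{2} x$ by sifting the auxiliary sequence
\[
\mathcal{A}' = \bigl\{ d m - 2 : d = q_1 q_2,\ q_1 \in [x^{1/10}, x^{1/3}),\ q_2 \geq x^{1/3},\ m \leq x/d \bigr\}
\]
with the upper-bound linear sieve. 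The key arithmetic input is that the moduli $d$ still lie within the Bombieri--Vinogradov range, so the distribution of $\mathcal{A}'$ in residue classes can be controlled on average.

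The principal obstacle is strictly arithmetic: one must verify that the constant $\lambda$ produced by the switching argument is small enough that $T(x) - \tfrac{1}{2} \# \mathcal{B}(x) \gg x / \log^{2} x$. This demands a careful optimisation of the Buchstab iterations defining the sieve functions $f$ and $F$ against the cut-off $x^{1/3}$ and the sieve limit $z = x^{1/10}$; the resulting numerical margin is thin, and it is precisely this tightness that forces the exponent $1/10$ in the statement of the theorem.
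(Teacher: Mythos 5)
The paper does not prove this theorem; it is quoted as Chen's result \cite{Chen} and serves only as motivating background, so there is no proof of record to compare against. On its own terms, your sketch follows the standard Chen strategy --- a weighted linear sieve on $\mathcal{A} = \{p+2 : p \leq x\}$ with a Chen-type weight, Bombieri--Vinogradov at level $1/2 - \varepsilon$, and the switching principle to control the bad $q_1 q_2 q_3$ configurations --- and the case analysis of the weight function is correctly set up.

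The gap is that the switching step, which carries essentially all of the difficulty, is asserted rather than argued. After switching, the object to be sifted is $\{q_1 q_2 q_3 - 2\}$, not $\{p+2\}$, and the distributional input you need is not merely Bombieri--Vinogradov applied to shifted primes: one must show that products $q_1 q_2 q_3$ with $q_1, q_2$ confined to prescribed ranges are well distributed in arithmetic progressions to moduli near $x^{1/2}$, \emph{uniformly} over the side conditions. This is a Bombieri--Vinogradov theorem for a bilinear/trilinear sequence, and establishing it (via Vaughan/Heath-Brown-type decompositions, or Chen's own elementary treatment) is the technical heart of the proof; the remark that ``the moduli $d$ still lie within the Bombieri--Vinogradov range'' does not supply it. Moreover, your auxiliary sequence $\mathcal{A}' = \{dm - 2 : d = q_1 q_2,\ m \leq x/d\}$ has quietly dropped the primality of $q_3$; one must either reinstate it by an additional sieve over $m$ or argue separately that the unrestricted count still gives a usable bound. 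Finally, the claim that the lower bound coming from $f(5)$ beats the loss $\lambda$ from switching is exactly where the exponent $1/10$ is forced, and it requires an explicit numerical evaluation of the linear-sieve functions $f, F$ on the relevant ranges; it is not a formality. As written, the proposal is a plausible roadmap for Chen's theorem but not a proof.
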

Unfortunately, neither techniques developed in order to prove Theorem \ref{chenthm} nor any other like the circle method are able to find infinitely many twin primes. 

Let $p_n$ denotes the $n$-th consecutive prime number. D. Goldston, J. Pintz and C. Y. {Y{\i}ld{\i}r{\i}m in \cite{GYPprimestuple} proved the following revolutionary result.
\begin{theorem} We have 
\[ \liminf_{n \rightarrow \infty} (p_{n+1}-p_n) \leq 16\]
under the Elliott--Halberstam conjecture (cf. Conjecture \ref{EH}) and 
\[  \liminf_{n \rightarrow \infty} \frac{p_{n+1}-p_n}{\log p_n} = 0\]
unconditionally.
\end{theorem}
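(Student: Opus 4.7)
The strategy is a weighted sieve argument in the spirit of Selberg. Fix an admissible $k$-tuple $\mathcal{H} = \{h_1, \ldots, h_k\} \subset \mathbb{Z}_{\geq 0}$ and consider the weighted sum
\begin{equation*}
S(N) := \sum_{N < n \leq 2N} \Biggl( \sum_{i=1}^k \mathbf{1}_{\mathbb{P}}(n+h_i) - \varrho \Biggr) w(n)^2,
\end{equation*}
with non-negative weights $w(n)$ and a parameter $\varrho$ to be set. Positivity of $S(N)$ for arbitrarily large $N$ forces, for some $n \in (N, 2N]$, the count $\#\{i : n+h_i \text{ prime}\}$ to exceed $\varrho$. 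Taking $\varrho = 1$ in the bounded-gap regime therefore yields two primes in $n+\mathcal{H}$ and hence $\liminf(p_{n+1}-p_n) \leq \operatorname{diam}(\mathcal{H})$, so the whole game is to choose $\mathcal{H}$ and $w$ cleverly enough to make $S(N) > 0$.

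The weights will be of GPY shape
\begin{equation*}
w(n) = \frac{1}{(k+\ell)!} \sum_{\substack{d \mid P_\mathcal{H}(n) \\ d \leq R}} \mu(d) \Bigl( \log \tfrac{R}{d} \Bigr)^{k+\ell}, \qquad P_\mathcal{H}(n) = \prod_{i=1}^k (n+h_i),
\end{equation*}
depending on a sieve cut-off $R = N^{\theta/2 - \varepsilon}$ and an auxiliary integer parameter $\ell$ to be optimised. Evaluating $S(N)$ reduces to two asymptotic computations: the smooth sum $\sum_n w(n)^2$, and, for each fixed $h_0 \in \mathcal{H}$, the prime-weighted sum $\sum_n \mathbf{1}_{\mathbb{P}}(n+h_0)\, w(n)^2$. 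Opening the squares, switching the order of summation, and applying a Perron-type contour shift reduces each expression to an explicit closed form whose leading term is the singular series $\mathfrak{S}(\mathcal{H})$ times a beta-integral-type expression in $k$ and $\ell$. Comparing the two asymptotics yields an inequality of schematic shape $F(k,\ell)\cdot \theta > \varrho + o(1)$.

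The delicate step is the prime-weighted sum: it requires the primes to be well-distributed in arithmetic progressions with moduli $[d_1,d_2]$ for $d_1,d_2 \leq R$, i.e.\ up to moduli of size $R^2 = N^\theta$. This is exactly the level-of-distribution input: Bombieri--Vinogradov supplies any $\theta < 1/2$ unconditionally, while the Elliott--Halberstam conjecture supplies any $\theta < 1$. Under EH, optimising $\ell$ makes the inequality with $\varrho = 1$ solvable for moderate $k$, and one finishes by exhibiting a concrete admissible tuple of diameter at most $16$, which is a purely combinatorial verification.

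For the unconditional statement the inequality with $\varrho = 1$ is unsolvable at $\theta < 1/2$, so one lets $k = k(N) \to \infty$ slowly and weakens the target by replacing $\varrho$ with a quantity of order $\nu$: positivity of the resulting weighted sum demonstrates that some tuple of length $o(\log N)$ contains two primes, giving $\liminf (p_{n+1}-p_n)/\log p_n \leq \nu$ for every $\nu > 0$. The principal obstacle, and indeed the only reason EH is invoked, is the level-of-distribution barrier at $\theta = 1/2$: all the combinatorics and asymptotic evaluations can be carried out by elementary, albeit technical, means, but pushing $\theta$ past $1/2$ requires genuinely deep and still-unproven input on primes in arithmetic progressions.
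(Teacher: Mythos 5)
The paper does not prove this theorem; it is quoted with a citation to Goldston--Pintz--Y{\i}ld{\i}r{\i}m \cite{GYPprimestuple}, so your sketch should be compared to their original argument. Your account of the Elliott--Halberstam half is essentially correct: the GPY weight of shape $\tfrac{1}{(k+\ell)!}\sum_{d\mid P_{\mathcal H}(n),\,d\leq R}\mu(d)(\log R/d)^{k+\ell}$, the two asymptotic evaluations (the pure square and the prime-twisted square), the entry of the level of distribution through moduli $[d_1,d_2]\leq R^2$, the optimisation in $\ell$, and the concluding combinatorial step of exhibiting an admissible $6$-tuple of diameter $16$ (e.g.\ $\{0,4,6,10,12,16\}$) are all as in GPY.

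Your description of the unconditional part, however, has a genuine gap. Lowering the threshold from $\varrho=1$ to some $\nu<1$ cannot detect two primes: $\sum_i\mathbf{1}_{\mathbf{P}}(n+h_i)$ is a non-negative integer, so positivity of $\sum_n\bigl(\sum_i\mathbf{1}_{\mathbf{P}}(n+h_i)-\nu\bigr)w(n)^2$ only forces $\sum_i\mathbf{1}_{\mathbf{P}}(n+h_i)\geq 1$ for some $n$, which is trivial. Nor does sending $k=k(N)\to\infty$ help on its own, since the diameter of any admissible $k$-tuple grows like $k\log k$, so the gap you would detect grows with $k$ rather than shrinking relative to $\log N$. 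The actual GPY device keeps the comparison against $\log 3N$ (i.e.\ $\varrho=1$) and instead \emph{enlarges the prime-detecting sum}: one studies
\begin{equation*}
\sum_{N<n\leq 2N}\Bigl(\sum_{1\leq h_0\leq H}\theta(n+h_0)-\log 3N\Bigr)\Lambda_R(n;\mathcal H,\ell)^2,\qquad H=\nu\log 3N,
\end{equation*}
with $k$ fixed but large. The shifts $h_0\notin\mathcal H$ are essentially decorrelated from the sieve weight and contribute a term of size roughly $\nu\log 3N$ times the pure square sum; this extra mass is what pushes the total over the threshold when $k$ is chosen large enough in terms of $\nu$. Positivity then forces two primes in an interval of length $H=\nu\log 3N$, and letting $\nu\to 0$ gives $\liminf(p_{n+1}-p_n)/\log p_n=0$. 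So the missing idea in your sketch is this auxiliary sum over all small shifts $h_0$, which is the mechanism that converts a sieve inequality that \emph{just barely fails} at level $\theta=1/2$ into the short-gap statement.
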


In \cite{GYPprimestuple} it was also proven that $EH[\frac{1}{2} + \eta]$ for any $\eta>0$ implies
\begin{equation}\label{Zhang}
 \liminf_{n \rightarrow \infty} (p_{n+1}-p_n) \leq C,
\end{equation}
for some explicit constant $C$ depending on $\eta$. On the other hand, proving $EH[\frac{1}{2} + \eta]$ seems to be unreachable by currently known techniques. 

The next giant step towards twin primes came with the work of Y. Zhang \cite{Zhang}. He showed that the inequality (\ref{Zhang}) is unconditionally true for $C=70~000~000$. His main idea was to get around the troubles with the Elliott--Halberstam conjecture by restricting attention only to sufficiently smooth numbers (i.e. numbers $n$ of the same order of magnitude as $x$ which have only prime divisors smaller than $x^\delta$ for some small, fixed $\delta$ and some large $x$). In such a case, Zhang was able to use the well developed theory of bilinear sums in order to cross the $1/2$ limit induced by the Bombieri--Vinogradov theorem.

The result of Zhang was later dramatically improved by J. Maynard in \cite{Maynard}, where he was able to take $C=600$.  For $i=1,\dots, k$ put $L_i(n)= A_i n + B_i$, where $A_i \in \mathbf{N}$, $B_i \in \mathbf{Z}$ and let $\mathcal{H}=\{L_1 , \dots , L_k \}$. Define
\begin{equation}\label{P(n)}
\mathcal{P}(n) = \prod_{i=1}^k L_i (n),
\end{equation}
\begin{equation}
\nu_p (\mathcal{H} )= \#\{ 1 \leq n \leq p \colon \mathcal{P} (n) \equiv 0 \bmod p \}.
\end{equation}
We call a tuple $\mathcal{H}$ \textit{admissible} if $\nu_p (\mathcal{H})<p$ for all primes and each two of the $L_i$ are distinct. We can also assume that each of the coefficients $A_i$ is composed of the same primes, none of which divides any of the $B_i$ (which can be done without loss of generality due to the nature of the problems studied here; cf. Conjecture \ref{HL} and Theorem \ref{MAIN}). Maynard's idea was to use a multidimensional variant of the Selberg sieve, i.e. he considered the expression
\begin{equation}
\sum_{N < n \leq 2N} \left(  \sum_{i=1}^k \mathbf{1}_{\mathbf{P}} (n+h_i) -1 \right) 
\left(   \sum_{\substack{ d_1, \dots , d_k \\ \forall i ~ d_i|L_i(n)}} \lambda_{d_1,\dots, d_k}  \right)^2
\end{equation}
for $\mathbf{1}_{\mathbf{P}}$ being the indicator function of primes, an admissible tuple $\{ n+h_1, \dots , n+h_k \}$ and the weights $\lambda_{d_1, \dots , d_k}$ suitably chosen to make the whole sum greater than $0$ for sufficiently large $N$. In the same paper Maynard also proved that 
\begin{equation}
\liminf_{n \rightarrow \infty} (p_{n+m} - p_n) < \infty
\end{equation}
for every positive integer $m$. Suprisingly, his proof of these two facts does not rely on any distributional claims stronger than the Bombieri--Vinogradov theorem.

Methods developed by Maynard were further investigated and up to this moment the best achievements concerning small gaps between primes were obtained in the Polymath Project \cite{Polymath8}. 

\begin{theorem} We have 
\begin{enumerate}
\item $\liminf_{n \rightarrow \infty} \left( p_{n+1}-p_n \right) \leq 246$ unconditionally,
\item $\liminf_{n \rightarrow \infty} \left( p_{n+1}-p_n \right) \leq 12$ on $EH$,
\item $\liminf_{n \rightarrow \infty} \left( p_{n+1}-p_n \right) \leq 6$ on $GEH$  (cf. Conjecture \ref{GEH}). 
\end{enumerate}
\end{theorem}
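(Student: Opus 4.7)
The approach is a direct application of the Maynard--Tao multidimensional Selberg sieve, refined in the Polymath8b project. Fix an admissible $k$-tuple $\mathcal{H}=\{h_1,\dots,h_k\}$ and, for large $N$, consider the two sums
\begin{equation*}
S_1 := \sum_{N < n \leq 2N} w_n, \qquad S_2 := \sum_{N < n \leq 2N} \left( \sum_{i=1}^{k} \mathbf{1}_{\mathbf{P}}(n+h_i) \right) w_n,
\end{equation*}
where $w_n = \left( \sum_{d_i \mid n+h_i} \lambda_{d_1,\dots,d_k} \right)^2$ and the weights are chosen in the GPY/Maynard form
$\lambda_{d_1,\dots,d_k} = \left( \prod_i \mu(d_i) d_i \right) F\!\left(\frac{\log d_1}{\log R},\dots,\frac{\log d_k}{\log R}\right)$
for a smooth function $F$ supported on the simplex $\{x_i\geq 0,\ \sum x_i \leq 1\}$. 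If for some choice of $F$ one has $S_2 > m S_1$, then there exists $n$ for which at least $m+1$ of the shifted values $n+h_i$ are prime, giving $\liminf(p_{n+m}-p_n) \leq \mathrm{diam}(\mathcal{H})$.

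The plan is to evaluate both sums asymptotically. A standard computation using Selberg sieve machinery shows
\begin{equation*}
S_1 \sim C_1 \cdot N (\log R)^k \cdot I_k(F), \qquad S_2 \sim C_2 \cdot \frac{N (\log R)^{k+1}}{\log N} \cdot \sum_{j=1}^{k} J_k^{(j)}(F),
\end{equation*}
where $I_k$ and $J_k^{(j)}$ are explicit quadratic functionals in $F$ and $R = N^{\theta/2-\varepsilon}$. The evaluation of $S_2$ is the delicate step: it requires the level of distribution $\theta$ for primes in arithmetic progressions. Unconditionally one takes $\theta = 1/2$ by Bombieri--Vinogradov; under $EH$ one can push $\theta$ arbitrarily close to $1$; under $GEH$ the same holds but now for general smooth coefficient sequences, which permits truncating the sieve support on a richer region than the simplex. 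Setting $M_k(\theta) := \sup_F \left( \sum_j J_k^{(j)}(F) / I_k(F) \right)$, the inequality $S_2 > m S_1$ reduces to the purely analytic condition $M_k(\theta) > 2m/\theta$.

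For each of the three claims one then chooses $k$ minimal for which the relevant variational threshold is exceeded, and exhibits an admissible $k$-tuple of optimal diameter. Under $EH$ ($\theta \to 1^{-}$) the Polymath8b lower bound $M_5 > 2$ suffices with $m=1$; the admissible tuple $\{0,2,6,8,12\}$ has diameter $12$. Under $GEH$, a separate argument using the epsilon-enlargement of the support (taking $F$ supported slightly beyond the simplex, exploiting cross-terms that $GEH$ legitimises in contrast to $EH$) yields a result strong enough to handle $k=3$, realised by the admissible triple $\{0,2,6\}$. Unconditionally, one needs $M_k > 4$; the Polymath threshold $k=50$ works, and the optimal admissible $50$-tuple has diameter $246$.

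The technical heart of the argument, and the main obstacle, is the quantitative analysis of $M_k(\theta)$: the functional extremisation is genuinely hard and must be attacked by a judicious finite-dimensional Ansatz (symmetric polynomials of bounded degree in $x_1,\dots,x_k$), followed by linear-algebraic computation of the resulting generalised eigenvalue problem. For the $GEH$ part, the additional obstacle is establishing the validity of the sieve weights with slightly extended support; this rests on treating the primes as a Dirichlet convolution and invoking $GEH$ for divisor-like functions, where the traditional $EH$ is insufficient. Verifying admissibility and optimality of the tuples is comparatively routine given tables of narrow admissible $k$-tuples.
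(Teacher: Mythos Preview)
This theorem is not proved in the present paper; it is quoted in the introduction as background, with the unconditional and $GEH$ parts attributed to the Polymath project \cite{Polymath8} and the $EH$ part to Maynard \cite{Maynard}. There is therefore no ``paper's own proof'' to compare against. Your sketch is a fair high-level summary of the Maynard--Tao/Polymath strategy that those references carry out: set up $S_1$, $S_2$ with multidimensional Selberg weights, reduce the existence of $m{+}1$ primes among the $n+h_i$ to the analytic inequality $\frac{\theta}{2}M_k>m$, and then optimise $F$ numerically to cross the threshold at $k=50$ (unconditional), $k=5$ (under $EH$), $k=3$ (under $GEH$), paired with narrow admissible tuples of diameters $246$, $12$, $6$.

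Two small corrections to your write-up. First, the weights $\lambda_{d_1,\dots,d_k}$ are not given directly by $\big(\prod_i\mu(d_i)d_i\big)F(\cdot)$; rather one specifies the dual variables $y_{r_1,\dots,r_k}=F(\cdot)$ and recovers $\lambda$ via the reciprocity relation (cf.\ equations~(\ref{opis_lambda})--(\ref{opis_y2}) of this paper). Second, your account of the $GEH$ case is slightly off: the Polymath argument for the bound $6$ does not merely enlarge the support of $F$ past the simplex. It combines the sieve with an additional ``epsilon-trick'' that inserts a further nonnegative term counting $n$ for which some $n+h_i$ is a product of exactly two large primes; evaluating that term asymptotically requires equidistribution of genuinely bilinear convolutions in arithmetic progressions, which is precisely what $GEH$ supplies and $EH$ does not. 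That is the mechanism by which $GEH$ reaches $k=3$ while $EH$ stalls at $k=5$.
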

The second statement is actually due to Maynard \cite{Maynard}. There is a hope that some advances on the numerics will allow us to change $12$ into $8$ in the nearest future. The third result can be considered to be the most interesting because it is already shown that it reaches a limit of what is potentially provable by the sieve techniques due to the parity problem (see \cite{Polymath8} for details).

\subsection{Distributional claims on arithmetic functions}

The question how well the prime numbers are distributed among arithmetic progressions is one of the major problems in analytic number theory. We state a famous conjecture which seems to be true due to heuristic reasoning.
\begin{conj}[Elliott--Halberstam conjecture]\label{EH} Let $\theta \in (0,1)$, $A \geq 1$ be fixed. For every $Q \ll x^\theta$,  we have 
\[ \sum_{q \leqslant  Q} \max_{\substack{a \\ (a,q)=1 }} \left| \pi (x; a, q) - \frac{\pi (x)}{\varphi (q)} \right| \ll  x\log^{-A} x \]
(cf. Subsection 1.4 for the notation).
\end{conj}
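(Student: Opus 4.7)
The statement is the Elliott--Halberstam conjecture, a famous open problem, so the plan can only outline what route one might attempt. The natural starting point is the Bombieri--Vinogradov theorem, which establishes the claimed bound in the range $Q \ll x^{1/2} \log^{-B} x$. The first step would be to recall its classical proof: apply Vaughan's identity to decompose $\Lambda(n)$ into Type~I (essentially linear) and Type~II (bilinear) Dirichlet convolutions, bound the bilinear pieces via the large sieve inequality for Dirichlet characters, and control the error relative to the Siegel--Walfisz approximation uniformly. Removing the supremum over residues $a$ in the statement is achieved by reducing to primitive characters and invoking Gallagher's inequality.

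To push beyond the $1/2$ barrier one would follow the strategy pioneered by Fouvry--Iwaniec and Bombieri--Friedlander--Iwaniec. After a Heath-Brown or Vaughan decomposition one is faced with bilinear sums of the shape
\[
\sum_{q \leq Q} \sum_{\substack{m \sim M \\ n \sim N \\ mn \equiv a \,(q)}} \alpha_m \beta_n,
\]
and after Fourier or Poisson completion, cancellation must be extracted from averages of incomplete Kloosterman sums over the moduli $q$. This leads naturally to inputs from the spectral theory of automorphic forms via Deshouillers--Iwaniec-type estimates, combined with a careful factorization of the modulus.

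The main obstacle is that every known result of this type requires either restricting the moduli (for example to smooth moduli, as in Zhang's argument), restricting to a single fixed residue class $a$ (as in Bombieri--Friedlander--Iwaniec), or averaging over $a$. None of these weakenings is permitted in Conjecture \ref{EH}, which demands a uniform supremum over all $a$ coprime to $q$. Moreover, as Friedlander and Granville have shown, versions of the statement with $\theta$ arbitrarily close to $1$ are in fact false in this uniform form, so any successful approach must stay clear of overshooting the plausible range.

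In summary, the honest plan is: first reprove Bombieri--Vinogradov to secure the range $\theta < 1/2$; next, using bilinear methods together with bounds for averaged Kloosterman sums, extend it to some $\theta = 1/2 + \eta$ under auxiliary restrictions; finally, devise a new arithmetic input that handles the supremum over residues uniformly and without smoothness hypotheses on the moduli. The hard part (indeed essentially the entire difficulty) is this last step, which lies beyond the reach of any currently known technique.
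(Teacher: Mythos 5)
You correctly recognize that the statement is the Elliott--Halberstam conjecture, an open problem, and that the paper offers no proof of it: it is stated purely as a conjecture that motivates the distributional hypotheses ($EH[\theta]$, $GEH[\theta]$) used later. Your survey of the known partial progress is accurate — Bombieri--Vinogradov secures $\theta < 1/2$ via Vaughan's identity, the large sieve, and reduction to primitive characters; the Fouvry--Iwaniec and Bombieri--Friedlander--Iwaniec bilinear/Kloosterman-sum machinery, and Zhang's smooth-moduli variant, all require weakenings (fixed residue, smooth moduli, or averaging over $a$) that are not permitted by the uniform supremum in the conjecture; and the Friedlander--Granville counterexamples rule out $\theta$ close to $1$ in this uniform form. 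Since nothing needs to be checked against a proof in the paper, there is nothing further to compare; your assessment of the status and the essential difficulty is correct.
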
 
 We will refer to this conjecture for some specific exponent $\theta$ by $EH[\theta ]$. The best known result of this kind is due to Bombieri and Vinogradov.

\begin{theorem}\label{BV} $EH[ \theta ]$ holds for every $\theta \in (0,1/2)$.
\end{theorem}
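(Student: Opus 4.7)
The plan is to reduce the statement to a bound on sums of character sums and then apply the large sieve. First I would pass from $\pi(x;a,q)$ to the associated Chebyshev function $\psi(x;a,q) = \sum_{n \leq x,\, n\equiv a(q)} \Lambda(n)$ by partial summation, so it suffices to bound
\[
\sum_{q \leq Q} \max_{(a,q)=1} \left| \psi(x;a,q) - \frac{x}{\varphi(q)} \right|.
\]
Using the orthogonality of Dirichlet characters modulo $q$ this quantity is controlled by $\frac{1}{\varphi(q)} \sum_{\chi \neq \chi_0} |\psi(x,\chi)|$ (up to a principal-character discrepancy of size $O(\log x)$ per modulus, which is acceptable). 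Each non-principal $\chi \mod q$ is induced by a unique primitive character $\chi^* \mod q^*$ with $q^*|q$, and $\psi(x,\chi)$ and $\psi(x,\chi^*)$ differ only by a sum over the common prime divisors of $q$ and $n$, a term easily controlled. So the task reduces to estimating
\[
\sum_{q^* \leq Q} \frac{q^*}{\varphi(q^*)} \sideset{}{^*}\sum_{\chi \bmod q^*} \max_{y \leq x} |\psi(y,\chi)|,
\]
where $\sum^*$ ranges over primitive characters.

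Next I would handle the small and large moduli separately. For $q^* \leq \log^{2A} x$, the Siegel--Walfisz theorem gives $\psi(y,\chi) \ll y \exp(-c\sqrt{\log y})$ uniformly, and summing over such moduli contributes $\ll x \log^{-A} x$. For the bulk range $\log^{2A} x < q^* \leq Q$, I would decompose $\Lambda$ via Vaughan's identity (taking cutoffs $U = V = x^{1/3}$), which writes $\Lambda = \Lambda_1 - \Lambda_2 - \Lambda_3 + \Lambda_4$ where $\Lambda_1$ is supported on small integers (contribution is bounded trivially using Siegel--Walfisz), $\Lambda_2, \Lambda_4$ give Type~I sums $\sum_m a_m \sum_n \chi(mn)$ with a smooth $n$-sum, and $\Lambda_3$ gives a Type~II bilinear sum $\sum_m \sum_n a_m b_n \chi(mn)$ with both $m,n$ of size between $x^{1/3}$ and $x^{2/3}$.

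The Type~I sums are handled directly: the inner sum over $n$ is a character sum of length $\gg x/m$, estimated by the Pólya--Vinogradov bound $\ll \sqrt{q^*}\log q^*$, which after summing over the primitive characters and $q^* \leq Q$ is acceptable provided $Q \leq x^{1/2-\varepsilon}$. The core of the argument is the Type~II (bilinear) piece, which I would bound using the multiplicative large sieve inequality
\[
\sum_{q \leq Q} \frac{q}{\varphi(q)} \sideset{}{^*}\sum_{\chi \bmod q} \left| \sum_{n \leq N} c_n \chi(n) \right|^2 \ll (N + Q^2) \sum_{n \leq N} |c_n|^2,
\]
applied separately in the $m$ and $n$ variables after Cauchy--Schwarz. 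The ranges $M, N \in [x^{1/3}, x^{2/3}]$ with $MN \asymp x$ yield a total contribution $\ll (MN + Q^2 M)^{1/2}(MN+ Q^2 N)^{1/2}$ times logarithms, which is $\ll x \log^{-A} x$ precisely when $Q^2 \leq x^{1-\varepsilon}$.

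The main obstacle, and the reason the theorem stops at exponent $1/2$, is the Type~II estimate: the large sieve forces $Q^2$ to lie below the length of the bilinear sum, and since one cannot push either $m$ or $n$ beyond $x$, any approach of this form is limited to $\theta < 1/2$. Crossing this barrier (as in Zhang's work for smooth moduli) requires genuinely new input beyond the large sieve, so within the classical framework everything hinges on making the Vaughan decomposition and the dyadic bookkeeping produce sums where the large-sieve inequality applies in its sharp form.
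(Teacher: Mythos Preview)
The paper does not actually prove this theorem: it is simply stated as the classical Bombieri--Vinogradov theorem, attributed to Bombieri and Vinogradov with references \cite{Bombieri} and \cite{Vino}, and then used as input throughout. There is no ``paper's own proof'' to compare against.

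Your sketch is the standard modern proof of Bombieri--Vinogradov and is correct in outline: pass to $\psi$, expand via Dirichlet characters and reduce to primitive ones, dispose of small conductors by Siegel--Walfisz, decompose $\Lambda$ by Vaughan's identity, and estimate the resulting Type~I and Type~II sums with the multiplicative large sieve. One small remark: in practice the Type~I sums are also most cleanly handled by the large sieve (summing $|\sum_n \chi(n)|$ over primitive characters) rather than P\'olya--Vinogradov alone, since P\'olya--Vinogradov by itself loses a factor that is only just acceptable at the boundary; but as a sketch your version is fine. Your closing paragraph correctly identifies the $Q^2 \ll x$ barrier from the large sieve as the reason the method stops at $\theta = 1/2$.
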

One can view the Bombieri--Vinogradov theorem as a `generalised Riemann hypothesis on average'. It is also a very powerful substitute for $GRH$ in the sieve-theoretical context. 

There exists also a conjecture much more general than $EH$. It asserts that not only the prime counting function $\pi$ or von Mangoldt function $\Lambda$ but all functions equipped with sufficiently strong bilinear structure and having a good correlation with arithmetic sequences do satisfy the Elliott--Halberstam conjecture. 

\begin{conj}[Generalised Elliott--Halberstam conjecture]\label{GEH}
Let $\theta \in (0,1)$, $\varepsilon>0$, $A \geq 1$ be fixed. Let $N,M$ be quantities such that $x^\varepsilon \ll N$, $M \ll x^{1-\varepsilon}$ with $x \ll NM \ll x$, and let $\alpha,\beta \colon \mathbf{N} \rightarrow \mathbf{R}$ be sequences supported on $[N,2N]$ and $[M,2M]$, respectively, such that one has the pointwise bound 
\[ |\alpha (n) | \ll \tau (n)^{O(1)} \log^{O(1)} x;~~~~|\beta (m)| \ll \tau (m)^{O(1)} \log^{O(1)} x \]
for all natural numbes $n,m$. Suppose also that $\beta$ obeys the Siegel--Walfisz type bound
\[ \left| ~\sum_{\substack{ (n,r)=1 \\ n \equiv a \bmod q }} \beta (n) - \frac{1}{\varphi (q)} \sum_{\substack{ (n,qr)=1}} \beta (n) \right| \ll \tau (qr)^{O(1)} M \log^{-A} x \]
for any $q,r \geq 1$, any fixed $A$, fixed $C \geq 0$, and any primitive residue class $a~(q)$. Then for any $Q \ll x^{\theta}$, we have
\[ \sum_{q \leq Q} \tau(q)^C \max_{y \leq x} \max_{\substack{a \\ (a,q)=1} }  \left| ~\sum_{\substack{ n \leq y  \\ n \equiv a \bmod q }} (\alpha * \beta ) (n) - \frac{1}{\varphi (q)} \sum_{\substack{ n \leq y \\ (n,q)=1 }} (\alpha * \beta ) (n) \right|
 \ll x \log^{-A} x. \]
\end{conj}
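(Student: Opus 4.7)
I should flag at the outset that the statement is a conjecture that is widely believed to lie beyond the reach of current methods when $\theta \geq 1/2$; what follows is therefore the standard attack template, together with an honest account of where it stalls. The plan is to mimic the proof of Bombieri--Vinogradov (Theorem \ref{BV}) with the bilinear input $\alpha * \beta$ playing the role of $\Lambda$.

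First I would convert the discrepancy into a multiplicative character sum via the orthogonality identity
\[ \mathbf{1}_{n \equiv a \bmod q} - \frac{\mathbf{1}_{(n,q)=1}}{\varphi(q)} = \frac{1}{\varphi(q)} \sum_{\substack{\chi \bmod q \\ \chi \neq \chi_0}} \overline{\chi(a)} \chi(n), \]
remove the max over $a$ by Cauchy--Schwarz, and reduce characters to primitive ones. This reduces the claim to a mean value estimate of the shape
\[ \sum_{q \leq Q} \frac{q}{\varphi(q)} \sum_{\chi \neq \chi_0 \bmod q}^{*} \left| \sum_n (\alpha * \beta)(n) \chi(n) \right|^2 \ll x^2 \log^{-A} x, \]
after which each inner sum factorises as $\left( \sum_n \alpha(n) \chi(n) \right) \left( \sum_m \beta(m) \chi(m) \right)$.

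Next I would feed this bilinear expression into the multiplicative large sieve inequality, which bounds the double sum over $q$ and $\chi$ by $(Q^2 + N)(Q^2 + M) \|\alpha\|_2 \|\beta\|_2$ up to logarithmic factors. Using $NM \asymp x$ and the pointwise size hypotheses on $\alpha, \beta$, this yields the desired saving precisely in the range $Q \ll x^{1/2 - \varepsilon}$. The Siegel--Walfisz assumption on $\beta$ would then be used to absorb the contribution of characters of small conductor, which are not adequately controlled by the large sieve alone; this is handled by the standard small-conductor versus large-conductor dichotomy together with a Page-type bound.

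The main obstacle, as expected, is the range $\theta \in [1/2, 1)$: the multiplicative large sieve is sharp at $Q^2 \asymp x$, and crossing this barrier requires genuinely exploiting the $\alpha * \beta$ structure beyond Cauchy--Schwarz. In specific situations one can bring in dispersion estimates, spectral theory of automorphic forms in the style of Deshouillers--Iwaniec, or algebraic-geometric bounds for Kloosterman sums, but no such argument is known to deliver the full conjecture at this level of generality; indeed, any such argument would imply the ordinary Elliott--Halberstam conjecture as a special case. Thus the proof plan above is honest up to the Bombieri--Vinogradov range and explains \emph{why} the conjecture is stated rather than proven past it.
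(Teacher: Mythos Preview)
Your write-up is appropriate: the statement is a \emph{conjecture}, and the paper offers no proof of it. It is simply stated and then the paper cites Motohashi for the known partial result $GEH[\theta]$ for $\theta<1/2$ (Theorem~\ref{GBV}). Your sketch of the Bombieri--Vinogradov style argument via characters, the multiplicative large sieve, and the Siegel--Walfisz hypothesis to handle small conductors is exactly the standard route to that partial result, and your diagnosis of why the method stalls at $\theta=1/2$ is correct. There is nothing further to compare, since the paper does not attempt a proof beyond citing the literature.
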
 

The broad generalization of this kind first appeared in \cite{GEH}. The best known result in this direction is currently proven by Motohashi \cite{Motohashi}.
\begin{theorem}\label{GBV} $GEH[ \theta ]$ holds for every $\theta \in (0,1/2)$.
\end{theorem}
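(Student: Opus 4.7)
My plan is to follow the classical Bombieri--Vinogradov strategy adapted to bilinear convolutions, essentially as in Motohashi's original argument \cite{Motohashi}. The first move is the standard reduction via Dirichlet characters: orthogonality rewrites the inner discrepancy for $n \equiv a \pmod q$ as a sum over non-principal characters $\chi \bmod q$ of $\overline{\chi}(a) S(\chi)$, where $S(\chi) := \sum_n (\alpha \ast \beta)(n) \chi(n)$. Inducing each such $\chi$ from its primitive conductor and using the trivial estimate $\max_a |\cdot| \leq \sum_\chi^{\ast} |\cdot|$ reduces matters to bounding
\[
\Sigma := \sum_{q \leq Q} \frac{1}{\varphi(q)} \sum_{\chi \bmod q}^{\ast} |S(\chi)|,
\]
where $\ast$ restricts to primitive characters. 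The side contributions coming from $(n,q)>1$ and from uninducing are handled by Möbius inversion against the small prime divisors of $q$ and absorbed using the polynomial-in-$\tau$ pointwise bounds on $\alpha$ and $\beta$.

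Second, I would exploit the convolution structure: since $\chi$ is completely multiplicative, $S(\chi) = A(\chi) B(\chi)$ with $A(\chi) := \sum_n \alpha(n) \chi(n)$ and $B(\chi) := \sum_m \beta(m) \chi(m)$. I then split the range of $q$ at a threshold $Q_0 := \log^{B_0} x$ for a large $B_0 = B_0(A, \varepsilon)$ to be chosen. For $q \leq Q_0$, the Siegel--Walfisz hypothesis on $\beta$ provides $|B(\chi)| \ll \tau(q)^{O(1)} M \log^{-C} x$ for arbitrary $C$, while the trivial bound $|A(\chi)| \ll N \log^{O(1)} x$ from the divisor-bounded size of $\alpha$ closes this range provided $C$ is chosen large in terms of $A$.

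Third, for the hard range $Q_0 < q \leq Q$, I would apply Cauchy--Schwarz to split $|AB|$ and then invoke the multiplicative large sieve inequality
\[
\sum_{q \leq Q} \frac{q}{\varphi(q)} \sum_\chi^{\ast} \left| \sum_{N < n \leq 2N} a_n \chi(n) \right|^2 \ll (Q^2 + N) \sum_n |a_n|^2
\]
to each of the two factors. Since $NM \asymp x$, both $N, M \gg x^\varepsilon$, and $Q^2 \ll x^{2\theta}$ with $\theta < 1/2$, the large-sieve outputs combine to $\ll x \log^{O(1)} x$ after absorbing the divisor factors into $\|\alpha\|_2^2$ and $\|\beta\|_2^2$.

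The main obstacle is the gap between this $\log^{O(1)}$ bound and the required $\log^{-A}$ saving: the large sieve alone cannot produce an arbitrary negative power of $\log$. The remedy, which is the technical heart of the argument, is Motohashi's bootstrap: one leverages the Siegel--Walfisz input on $B(\chi)$ also inside the range $q > Q_0$ by a further decomposition of the primitive characters according to the size of their conductor relative to $Q_0$, applying Siegel--Walfisz on $B(\chi)$ whenever the conductor is small and the large sieve on the companion factor $A(\chi)$; iterating this dichotomy and taking $B_0$ sufficiently large in terms of $A$ yields the required gain of $\log^{-A} x$, as packaged in \cite{Motohashi}.
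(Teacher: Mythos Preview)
The paper does not give its own proof of this theorem; it simply attributes the result to Motohashi \cite{Motohashi}, and your sketch follows exactly that character-theoretic large-sieve argument. One minor comment: your final paragraph overstates the remaining difficulty, since in the genuinely bilinear setting of $GEH$ the split at $Q_0 = (\log x)^{B_0}$ already suffices --- Siegel--Walfisz handles conductors $q \le Q_0$, and for a dyadic block $D > Q_0$ the large sieve together with Cauchy--Schwarz gives a bound $\ll D^{-1}(D^2+N)^{1/2}(D^2+M)^{1/2}\|\alpha\|_2\|\beta\|_2$, which is $\ll x(\log x)^{O(1)}/D \le x(\log x)^{O(1)-B_0}$ in the worst case, so choosing $B_0$ large yields the $\log^{-A}$ saving without any further iteration.
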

It is possible to get $EH[\theta ]$ easily from $GEH [\theta]$ by Vaughan's identity as shown in \cite{Polymath8}. The importance of $GEH [\theta ]$ stands on the fact that it allows us to obtain important results of $EH$ type concerning almost primes as in the following result almost completely analogous to what is proven in \cite[Subsection `The generalized Elliott--Halberstam case']{Polymath8}. 

\begin{theorem}\label{GEHwniosek} Assume $GEH[\theta]$. Let $r \geq 1$, $\epsilon > 0$ and $A\geq 1$ be fixed, let 
\[ \Delta_{r,\epsilon} = \{ (t_1,\dots,t_r) \in [\epsilon,1]^r \colon ~ t_1 \leq \dots \leq t_r;~ t_1+\dots+t_r=1\}, \]
and let $F \colon \Delta_{r,\epsilon} \rightarrow {\bf R}$ be a fixed smooth function. Let $\widetilde F \colon {\bf N} \rightarrow {\bf R}$ be the function defined by setting
\[\displaystyle  \widetilde F(n) = F \left( \frac{\log p_1}{\log n}, \dots, \frac{\log p_r}{\log n} \right) \]
whenever $n=p_1 \dots p_r$ is the product of $r$ distinct primes $p_1 < \dots < p_r$ with $p_1 \geq x^\epsilon$ for some fixed $\epsilon>0$, fixed $C \geq 0$, and $\widetilde F(n)=0$ otherwise. Then for every $Q \ll x^\theta$, we have
\[ \displaystyle \sum_{q \leq Q}  \tau(q)^C \max_{y \leq x} \max_{\substack{a \\ (a,q)=1}}
 \left| ~\sum_{\substack{ n \leq x  \\ n \equiv a \bmod q }} \widetilde{F} (n) - \frac{1}{\varphi (q)} \sum_{\substack{ n \leq x \\ (n,q)=1}} \widetilde{F} (n) \right|
  \ll x \log^{-A} x.\]
\end{theorem}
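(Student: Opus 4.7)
The plan is to reduce the statement to the Generalized Elliott--Halberstam conjecture \ref{GEH} by rewriting $\widetilde{F}$ as a sum of Dirichlet convolutions $\alpha \ast \beta$ to which that conjecture applies termwise. The key observation is that if $n=p_1\cdots p_r$ with $p_1<\dots<p_r$ and $p_1\geq x^\epsilon$, then upon factoring $n=m\cdot p$ with $p=p_r$ and $m=p_1\cdots p_{r-1}$, the weight $\widetilde{F}(n)$ depends on $m$ and $p$ only through the ratios $\log p_i/\log n$; after replacing $F$ by a tensor-product approximation, these dependencies decouple into $\alpha(m)\beta(p)$, with $\beta$ supported on primes (precisely the setting in which the Siegel--Walfisz hypothesis required by \ref{GEH} is known).

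I would first dyadically split the range $n\leq x$ into subintervals $[X,2X]$, which contributes a harmless $O(\log x)$ factor; on such a range $\log n=\log X+O(1)$ and, by smoothness of $F$, replacing $\log n$ by $\log X$ in the arguments of $F$ introduces only a negligible error. Next I would cover $\Delta_{r,\epsilon}$ by small boxes of side $\delta$ centred at points $\vec c=(c_1,\dots,c_r)$ with $c_1+\dots+c_r=1$, take a smooth partition of unity $\{\psi_{\vec c}\}$ subordinate to this cover, and on each surrounding cube expand the localised function $\psi_{\vec c}F$ in a rapidly convergent Fourier series
\[ (\psi_{\vec c}F)(t_1,\dots,t_r)=\sum_{\vec k\in\mathbf{Z}^r}a_{\vec c,\vec k}\prod_{i=1}^r e^{2\pi i k_i t_i/\delta'}, \]
with $|a_{\vec c,\vec k}|\ll_A(1+|\vec k|)^{-A}$ for any fixed $A$. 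Each Fourier mode is a tensor product of one-variable exponentials, so after grouping the first $r-1$ variables into a function $\alpha(m)$ and the last into $\beta(p)$, the contribution of a single mode to $\widetilde{F}(n)$ takes the form $\alpha(m)\beta(p)$.

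Because the box centres satisfy $c_1<\dots<c_r$ with gaps depending on $\delta$, for $\delta$ small enough the prime factors of $m$ lie in a range disjoint from and strictly below the range of $p$, so the constraints that $p=p_r$ be the largest prime factor of $n$ and that all $p_i$ be distinct are built into the supports for free. In particular $p\asymp X^{c_r}$ and $m\asymp X^{1-c_r}$, and the hypotheses $N\gg x^\varepsilon$, $M\ll x^{1-\varepsilon}$ of Conjecture \ref{GEH} follow automatically. The pointwise bound $|\alpha|,|\beta|\ll\tau^{O(1)}\log^{O(1)}x$ is immediate since the weights are bounded and $\tau$ is uniformly bounded on products of at most $r$ primes. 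The Siegel--Walfisz property of $\beta$ reduces by partial summation to the classical Siegel--Walfisz theorem for primes in arithmetic progressions.

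Applying \ref{GEH} to each piece and summing over the $O(\log x)$ dyadic scales, the $O(\delta^{-r})$ partition-of-unity boxes, and the rapidly convergent Fourier modes (whose coefficients supply the required decay) yields an accumulated error $\ll x\log^{-A}x$ after adjusting $A$. The main technical obstacle I anticipate is bookkeeping: one has to ensure that all implied constants in the pointwise and Siegel--Walfisz bounds, and the choice of $\delta$, are uniform in the Fourier index $\vec k$ and box centre $\vec c$, so that the rapid decay $|a_{\vec c,\vec k}|\ll_A(1+|\vec k|)^{-A}$ genuinely absorbs the dependence and produces a single clean estimate at the end.
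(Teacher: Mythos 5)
Note first that the paper does not give a proof of Theorem~\ref{GEHwniosek}; it is cited as established in the subsection ``The generalized Elliott--Halberstam case'' of \cite{Polymath8}. Your overall strategy --- decomposing $\widetilde F$ into a rapidly convergent combination of Dirichlet convolutions $\alpha*\beta$ via a partition of unity and a Fourier (tensor--product) expansion of $F$, then applying Conjecture~\ref{GEH} termwise --- is the same general route taken there, so the plan is sound at a high level.

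However, the step where you dismiss the $\log n \to \log X$ replacement as introducing ``only a negligible error'' is a genuine gap. On $n\in(X,2X]$ the error in each coordinate $\log p_i/\log n$ is of size $\asymp 1/\log X$, hence so is the pointwise error in $\widetilde F(n)$; since the number of $n\leq x$ in the support of $\widetilde F$ is $\asymp x/\log x$, the total loss after the trivial discrepancy estimate is only a bounded power of $\log x$, which cannot absorb an arbitrary $\log^{-A}x$ target. This is fixable --- either iterate (the error is itself of the same analytic shape with a $\log^{-1}$ gain, so $A$ iterations save $\log^{-A}x$), or avoid the approximation entirely by noting that $\log n=\sum_i\log p_i$, so that $\widetilde F(n)=F(\vec u/\|\vec u\|_1)$ is already an exact smooth function of $\vec u=(\log p_1/\log X,\dots,\log p_r/\log X)$ --- but as written the step is not justified. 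You also gloss over two further points: (i) the ordering $p_{r-1}<p_r$ is a genuine coupling between $\alpha(m)$ and $\beta(p)$ for boxes centred near the diagonal $c_{r-1}=c_r$, where the sorted extension of $F$ is not smooth and its Fourier coefficients do not decay rapidly, so this region requires a separate estimate; and (ii) for dyadic scales $X$ substantially below $x$, the level $Q\ll x^\theta$ can exceed $X^\theta$, so Conjecture~\ref{GEH} applied at scale $X$ does not cover the full range of $q$; this needs either a dedicated crude bound (which does not by itself yield the required $\log^{-A}x$ savings) or a formulation restricted to $n\asymp x$.
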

This work is focused on almost primes, so that is the reason, why in Theorem \ref{MAIN} we have results achieved under $GEH$, but not under $EH$.

\subsection{The main result and key ingredients of its proof} 

The twin prime conjecture is equivalent to
\begin{equation}
 \liminf_{n \rightarrow \infty} \Omega (n(n+2)) = 2.
\end{equation}
The above statement with $3$ instead of $2$ is a slightly weaker version of Chen's theorem. We can also formulate the Dickson--Hardy--Littlewood $k$--tuple conjecture this way.
\begin{conj}\label{HL} For every admissible $k$--tuple $\mathcal{H}$, we have
\[ \liminf_{n \rightarrow \infty} \Omega ( \mathcal{P}(n)) = k, \]
where $\mathcal{P}(n)$ is defined as in (\ref{P(n)}). 
\end{conj}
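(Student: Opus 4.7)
The plan is to follow the only strategy that has yielded any progress toward this statement, namely the multidimensional Selberg sieve of Maynard as refined in the Polymath project. Note first that for admissible $\mathcal{H}$ one automatically has $\Omega(\mathcal{P}(n)) \geq k$ for all sufficiently large $n$, because each linear form contributes at least one prime factor and the admissibility condition prevents systematic collapse. Hence the statement is equivalent to the assertion that all $L_i(n)$ are simultaneously prime for infinitely many $n$, i.e.\ the Dickson--Hardy--Littlewood prime $k$-tuples conjecture.

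The natural attempt is to examine the sum
\[ S(N) = \sum_{N < n \leq 2N} \left( \sum_{i=1}^k \mathbf{1}_{\mathbf{P}}(L_i(n)) - (k-1) \right) \left( \sum_{\substack{d_1,\dots,d_k \\ d_i \mid L_i(n)}} \lambda_{d_1,\dots,d_k} \right)^2 \]
with the weights $\lambda_{d_1,\dots,d_k}$ chosen to be supported on tuples with $d_1 \cdots d_k \leq N^{\theta/2}$ and parametrised, as in Maynard's work, by a smooth cutoff $F$ on the simplex $\{(t_1,\dots,t_k) : t_i \geq 0,\ \sum t_i \leq 1\}$. The Bombieri--Vinogradov theorem (Theorem~\ref{BV}) evaluates the pieces with $\mathbf{1}_{\mathbf{P}}(L_i(n))$ as $\theta$-ranges up to $1/2$, and one would attempt to optimise $F$ so as to push the ratio $\sum_i J_k^{(i)}(F)/I_k(F)$ past $k-1$, thereby forcing at least $k$ of the $L_i(n)$ to be prime, which by $\Omega(\mathcal{P}(n)) \geq k$ would mean all of them. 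Assuming the full $GEH$ instead of just $EH$ enlarges the admissible level of distribution to $\theta$ arbitrarily close to $1$ and invokes Theorem~\ref{GEHwniosek} to replace $\mathbf{1}_{\mathbf{P}}$ by smoothed indicators of almost-primes, giving further flexibility.

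The hard part, and in fact the reason this statement stands as a conjecture, is the parity obstruction of Selberg. The quadratic weights $(\sum \lambda)^2$ are nonnegative and cannot distinguish integers with an even number of prime factors from those with an odd number, so any argument of the above shape is incapable of producing the ratio needed to force \emph{all} $k$ forms to be prime, even conditionally on $GEH[\theta]$ for every $\theta < 1$. This is why Theorem~\ref{MAIN} records only upper bounds $\rho_k \geq k+1$ rather than $\rho_k = k$, and why the Polymath~$8b$ bound of $6$ for pairs is already known to be the sieve-theoretic limit. Breaking through would require either a genuinely bilinear input for the Möbius or Liouville function going beyond what Vaughan- or Heath-Brown-type identities permit, or an entirely non-sieve-theoretic mechanism; absent such an input, the realistic target of the method is only the weaker $\liminf \Omega(\mathcal{P}(n)) \leq \rho_k$ for the smallest $\rho_k$ the optimisation of $F$ allows, and the equality $\liminf = k$ must be regarded as beyond present methodology.
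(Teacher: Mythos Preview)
Your proposal correctly identifies that the statement is a \emph{conjecture}, not a theorem, and the paper offers no proof of it whatsoever; it is stated precisely as Conjecture~\ref{HL} and the paper's actual results (Theorem~\ref{MAIN}) are the weaker bounds $\Omega(\mathcal{P}(n)) \leq \rho_k$ with $\rho_k > k$. There is therefore no ``paper's own proof'' to compare against, and your discussion of why the multidimensional Selberg sieve cannot reach $\rho_k = k$ --- the parity obstruction preventing the ratio from exceeding $k-1$ even under $GEH$ --- is accurate and matches the paper's own remarks on the Polymath limit.

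One minor quibble: your opening sentence says the inequality $\Omega(\mathcal{P}(n)) \geq k$ holds because ``admissibility prevents systematic collapse''. Admissibility is not what is doing the work here; the assumption that the $L_i$ are pairwise distinct linear forms already guarantees that $L_i(n) = L_j(n)$ can hold for at most finitely many $n$, and for large $n$ each $|L_i(n)| > 1$ contributes at least one prime factor. Admissibility concerns local obstructions modulo small primes and is needed for the conjecture to be plausible, not for the trivial lower bound.
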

Maynard \cite{3-tuples, MaynardK} proved that 
\begin{equation}\label{glowne_szacowanie}
\liminf_{n \rightarrow \infty} \Omega ( \mathcal{P}(n))  \leq \rho_k,
\end{equation}
where $\rho_k$ are given as in a table below. 
\begin{table}[h!]
\centering
\text{Table A.}
\vspace{1mm}
\\
  \begin{tabular}{ | C{0.7cm} |  C{1cm}  | C{1cm}  | C{1cm}  | C{1cm} |  C{1cm} | C{1cm} | C{1cm} |  C{1cm} | }
    \hline
    $k$ & 3 & 4 & 5 & 6 & 7 & 8 & 9 & 10 \\ \hline
    $\rho_k$ & 7 & 11 & 15 & 18 & 22 & 26 & 30 & 34 \\
      \hline
  \end{tabular}
\end{table}
\\ It was an improvement upon the results of Diamond and Halberstam \cite{DH} for ${3 \leq k \leq 6}$, and Ho and Tsang \cite{HT} for $7\leq k \leq 10$. The proof employed the following equality valid for any $n$ square-free and any $y \geq n^{1/2}$:
\begin{equation}\label{1.8}
\Omega (n) = \sum_{\substack{ p|n \\ p \leq y}} \left( 1 - \frac{\log p}{\log y} \right) + \frac{\log n}{\log y} + \sum_{r=1}^\infty \chi_r (n),
\end{equation}
where
\begin{equation}
 \chi_r (n) = 
\begin{cases}
- \left( \frac{\log n}{\log y} - 1 - \sum_{i=1}^{r-1} \frac{\log p_i}{\log y} \right), & \text{if~} n=p_1 \dots p_r \text{~with~ }  \\
& ~~  p_1 < \dots < p_{r-1} \leq y< p_r,\\
\\
0, & \text{otherwise. }
\end{cases}
\end{equation}
The idea was to consider the sum
\begin{equation}\label{sum}
\sum_{N < n \leq 2N} (c - \Omega(\mathcal{P} (n) )) \text{weight}(n),
\end{equation}
with  the classical Selberg sieve in the place of weights and use the identity (\ref{1.8}) to prove that (\ref{sum}) is positive for sufficiently large $N$ and $c$ suitably chosen.

The objective of the present work is to improve upon the result of Maynard in the case $k=5$ and to draw stronger conclusions for $4 \leq k \leq 10$ assuming $GEH$. 
\begin{theorem}[Main Theorem]\label{MAIN} Given an admissible $5$--tuple $\mathcal{H}$ the inequality (\ref{glowne_szacowanie}) holds with $\rho_5=14$.
Moreover, assuming $GEH[2/3]$, for any admissible $k$--tuple $\mathcal{H}$, the same inequality holds with $\rho_k$ given in a table below:

\begin{table}[h!]
\centering
\text{Table B.}
\vspace{1mm}
\\
  \begin{tabular}{ | C{0.7cm} |  C{1cm}  | C{1cm}  | C{1cm}  | C{1cm} |  C{1cm} | C{1cm} | C{1cm} | }
    \hline
    $k$ & 4 & 5 & 6 & 7 & 8 & 9 & 10 \\ \hline
    $\rho_k$ & 10 & 13 & 17 & 20 & 24 & 28 & 32 \\
      \hline
  \end{tabular}
\end{table}
\end{theorem}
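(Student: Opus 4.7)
The plan is to combine Maynard's decomposition identity \eqref{1.8} with the multidimensional Selberg sieve weights of \cite{Maynard}, in place of the one-dimensional weights originally used in \cite{3-tuples, MaynardK}, and to exploit Theorem \ref{GEHwniosek} to handle the almost-prime correction terms $\chi_r$ assuming $GEH[2/3]$. For a fixed admissible tuple $\mathcal{H}=\{L_1,\dots,L_k\}$, I would study
\begin{equation*}
S(N,c) \;=\; \sum_{N<n\leq 2N}\bigl(c-\Omega(\mathcal{P}(n))\bigr)\,w_\mathcal{H}(n),
\qquad w_\mathcal{H}(n) \;=\; \Bigl(\sum_{d_i\mid L_i(n)}\lambda_{d_1,\dots,d_k}\Bigr)^{2},
\end{equation*}
where $\lambda$ are Maynard--Tao weights supported on $\prod_i d_i\leq R=N^{\theta/2-\delta}$ and encoded by a smooth symmetric function $F\colon[0,1]^k\to\mathbf{R}$ on the simplex $t_1+\cdots+t_k\leq 1$. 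If $S(N,c)>0$ for arbitrarily large $N$, then \eqref{glowne_szacowanie} holds with $\rho_k=\lceil c\rceil-1$.

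Choosing $y=N^{1/2}$ in \eqref{1.8} and summing over $i$, one writes
\begin{equation*}
\Omega(\mathcal{P}(n))\;=\;\sum_{i=1}^{k}\sum_{\substack{p\mid L_i(n)\\ p\leq y}}\!\Bigl(1-\tfrac{\log p}{\log y}\Bigr)+\tfrac{\log\mathcal{P}(n)}{\log y}+\sum_{i=1}^{k}\sum_{r\geq 1}\chi_r(L_i(n))+O(1).
\end{equation*}
The middle term is asymptotically $2k$. The first term, after interchanging sums, reduces to a linear combination of Selberg-type sums $\sum_n \mathbf{1}_{p\mid L_i(n)}\,w_\mathcal{H}(n)$ over small primes $p\leq y$; these are handled unconditionally by Bombieri--Vinogradov (Theorem \ref{BV}) for $\theta=1/2$, and by Theorem \ref{GEHwniosek} for $\theta=2/3$ on $GEH[2/3]$, yielding explicit functionals $J_i(F)$. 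The $\chi_r$ terms concentrate on $n$ for which $L_i(n)=p_1\cdots p_r$ with $p_1<\cdots<p_{r-1}\leq y<p_r$; these are exactly almost-prime sums within the scope of Theorem \ref{GEHwniosek}, which allows their asymptotic evaluation under $GEH[2/3]$. For the unconditional $k=5$ assertion one can simply drop the $\chi_r$ contributions using $\chi_r\leq 0$, since the gain provided by passing from the one-dimensional sieve of \cite{3-tuples, MaynardK} to the multidimensional one already suffices to cross the threshold $\rho_5=14$.

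After the asymptotic evaluations, positivity of $S(N,c)$ becomes a variational inequality of the shape
\begin{equation*}
(c-2k)\,I_k(F)\;>\;\sum_{i=1}^{k}J_i(F)+\sum_{r\geq 1}\sum_{i=1}^{k}K_{i,r}(F),
\end{equation*}
with all functionals quadratic in $F$ and computable as integrals over the simplex. Following the Polymath ansatz, I would restrict $F$ to a symmetric polynomial of bounded degree in $t_1,\dots,t_k$, which turns minimisation of the ratio $(\sum_i J_i+\sum_{i,r}K_{i,r})/I_k$ into a generalised eigenvalue problem solvable numerically. Taking $\theta=1/2$ should reprove $\rho_3=7$ as a consistency check and produce the new bound $\rho_5=14$, while $\theta=2/3$ should yield Table B. The main obstacle I anticipate is the precise evaluation of the $\chi_r$ sums: one must dyadically decompose the large prime factor $p_r>y$, verify that the test functions one ends up applying satisfy the regularity required by Theorem \ref{GEHwniosek}, and check that the sieve truncation $R=N^{\theta/2-\delta}$ is compatible with the distribution range $x^\theta$ there. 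A secondary difficulty is the numerical optimisation itself: for the larger values of $k$ in Table B the polynomial basis for $F$ must be taken of fairly high degree before the computed optimum stabilises at the claimed constant.
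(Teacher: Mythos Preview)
Your outline has two structural gaps, one for each part of the theorem.

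\medskip
\textbf{The $GEH[2/3]$ part.} You take $y=N^{1/2}$ in identity~\eqref{1.8} and sieve weights supported on $\prod_i d_i\le R=N^{\theta/2-\delta}$ with $\theta=2/3$, so in the paper's notation $\vartheta_0=1/2$ and $\vartheta\approx 1/3$. The evaluation of the ``small prime'' sum $\sum_{p\le y}\sum_n \mathbf{1}_{p\mid L_i(n)}\,w_\mathcal{H}(n)$ does \emph{not} reduce to a distributional claim as you suggest; it is a direct count with error $O\bigl(\sum_{p\le R_0}\sum_{d,e}|\lambda_d\lambda_e|\bigr)\ll R_0R^{2+o(1)}$, and this is negligible only when $\vartheta_0+2\vartheta<1$ (cf.\ Proposition~\ref{5.1}). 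With your parameters this reads $1/2+2/3<1$, which fails. To access any $\vartheta>1/4$ you must take $\vartheta_0<1/2$, but then the identity~\eqref{1.8} is no longer available (it needs $y\ge n^{1/2}$). The paper's fix is precisely the extended identity~\eqref{1.11} with the double family $\chi_{r,s}$, which holds for all $y>0$; this is the missing ingredient that makes $\vartheta=\vartheta_0=1/3$ admissible and lets $GEH[2/3]$ enter.

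\medskip
\textbf{The unconditional $k=5$ part.} Your claim that one may drop the $\chi_r$ terms and rely solely on the upgrade to the multidimensional sieve is numerically false. With the standard simplex support $\mathcal{R}_k$ and $\vartheta=1/4$, the paper's analysis (Table~F and the discussion preceding it) shows that even with the $\chi_r$ contributions retained and an optimised polynomial $F$, the best value of $\Upsilon_5$ one can reach is about $15.01$; discarding the negative $\chi_r$ only moves this upward. The paper gets below $15$ by two further devices you omit: (i) enlarging the sieve support from $\mathcal{R}_k$ to the Polymath region $\mathcal{R}_k'$, and (ii) taking $\vartheta_0=3/8<1/2$ (which again forces the use of~\eqref{1.11} rather than~\eqref{1.8}) so that the constraint $\vartheta_0+\frac{2k}{k-1}\vartheta<1$ of Proposition~\ref{5.1} is met. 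With these choices a careful computation of $J_0$ and $J_{1,1},J_{2,1},J_{3,1},J_{4,1},J_{2,2},J_{3,2}$ gives $\Upsilon_5^{\mathrm{ext}}<14.99$. Neither the support enlargement nor the modified identity appears in your plan, and without them the threshold $14$ is out of reach.
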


As a by-product we are also able to give an alternative proof of Maynard's result for $k=3$ (it is worth mentioning that previously this case required a specifically devised approach using the Diamond--Halberstam sieve).

Motivated by Maynard's successful proof concerning small gaps between consecutive primes, we apply the multidimensional sieve to the problem of $k$--tuples of almost primes. The main difficulty in this approach is that the variation of the sieve proposed by Maynard in \cite{Maynard} combined with techniques developed in \cite{MaynardK} is not strong enough for any point of Theorem \ref{MAIN} to be proven. 

The main parameters for our set-up are denoted $\vartheta$ and $\vartheta_0$. The former is related to the level of distribution in $GEH[ \theta ]$ by $\theta = 2 \vartheta $. The latter can be viewed as related to $y$ from the identity (\ref{1.11}) by $N^{\vartheta_0} \approx y$. In order to produce non-trivial results relying on $GEH$, we need to work with the parameter $\vartheta$ greater than $1/4$. Then, the constraint $\vartheta_0 + 2 \vartheta < 1$ in Proposition \ref{5.1} forces us to take $\vartheta_0 < 1/2$. As a result, we need a variation of identity (\ref{1.8}) which works also for $0 \leq y <n^{1/2}$. We can use the following simple equation 
\begin{equation}
1 = \sum_{r=1}^\infty \sum_{s=0}^r \mathbf{1}_{n=p_1 \dots p_r \text{ for some } p_1 < \dots < p_{r-s} \leq y < p_{r-s+1} < \dots < p_r},
\end{equation}
valid for square-free $n$ and get 
\begin{equation}\label{1.11}
\Omega (n) = \sum_{\substack{ p|n \\ p \leq y}} \left( 1 - \frac{\log p}{\log y} \right) + \frac{\log n}{\log y} + \sum_{r=1}^\infty \sum_{s=1}^r  \chi_{r,s} (n),
\end{equation}
where
\begin{equation}
 \chi_{r,s} (n) = 
\begin{cases}
- \left( \frac{\log n}{\log y} - s - \sum_{i=1}^{r-s} \frac{\log p_i}{\log y} \right), & \text{if~} n=p_1 \dots p_r \text{~with~ }  \\
& ~~  p_1 < \dots < p_{r-s} \leq y < p_{r-s+1} < \dots < p_r,\\
\\
0, & \text{otherwise. }
\end{cases}
\end{equation}
With this modification, we can use Propositions \ref{5.1}--\ref{5.4} with $\vartheta_0$ arbitrarily small and the final results will not be damaged in any way. In \cite{MaynardK} the author was forced to take $\vartheta_0$ not smaller than $1/2$ (in his work it was $r_1$ instead of $\vartheta_0$ and $r_2$ instead of $\vartheta$) and also $\vartheta$ not greater than $1/4$ which strictly blocked any possibility of using $GEH$ to amplify the existing results. 

The unconditional part of Theorem \ref{MAIN} is yet harder. The reason for that probably hides behind Conjecture \ref{CONJ}, supported by some numerical experiments, and the calculations mentioned in Table F. We can enhance our sieve by expanding its support as described in (\ref{SelSieve}) which is basically an idea inspired by \cite{Polymath8}. This prodecure does not allow us to take $\vartheta_0$ as large as $1/2$ any longer, but thanks to the identity (\ref{1.11}) this shall not be a major obstacle. 

The general strategy of our proof is fairly simple. We consider the sum from (\ref{sum}) and just like Maynard we wish to prove that it is greater than $0$ for sufficiently large $N$. We need some function $\text{weight}(n)$ which gives preference to the ``good candidates'' for almost primes. The multidimensional Selberg sieve used in this role allows us to transform the problem about $k$--tuples of almost primes into estimating four sums which can be viewed as a multidimensional analogues of sums $(5.8)$--$(5.11)$ from \cite{MaynardK}. Then, we have to recover the results about the sums $T_\delta$ and $T_\delta^{*}$, playing the crucial role in \cite{MaynardK}, in the multidimensional context. Appropriate lemmas are described in Chapter 2. In the end we are left with an optimization problem depending on various complicated integrals. This problem is studied in the last section.

\subsection{Notation}

The letter $p$ always denotes a prime number and ${\log}$ denotes the natural logarithm. We consider $N$ as a number close to infinity. To avoid any problems with the domains of logarithmic functions, we assume that $N>16$. We also use the notation $\mathbf{N}=\{1,2,3,\dots \}$.  By $\mathcal{H}$ we always denote an admissible $k$--tuple. The letter $k$ always denotes a positive integer greater than or equal to $3$.\\

We use the following functions which are common in analytic number theory:

\begin{itemize}
\item $\varphi (n) := \left| \left( \mathbf{Z} / n \mathbf{Z} \right)^\times \right|$ denotes Euler totient function; 
\item $\tau (n)  := \sum_{d|n} 1 $ denotes the divisor function; 
\item $\Omega (n)$ denotes the number of prime factors of $n$; 
\item $\pi (x) := \left\{ n \in \mathbf{N}: n \leq x, ~n \text{ is prime} \right\}$;
\item $\pi (x;q,a) := \left\{ n \in \mathbf{N}: n \leq x,~n \equiv a \bmod q, ~n \text{ is prime} \right\}$;
\item $(n_1, \dots , n_r)$ and $[n_1, \dots ,n_r]$ denote the greatest common divisor and the lowest common multiple, respectively;
\item For a logical formula $\phi$ we define the indicator function $\mathbf{1}_{\phi (x)}$ which equals $1$ when $\phi (x)$ is true and $0$ otherwise;
\item For two arithemtic functions $\alpha, \beta \colon \mathbf{N} \rightarrow \mathbf{C}$ we define their Dirichlet convolution by the formula $(\alpha * \beta )(n) = \sum_{d|n} \alpha (d) \beta (n/d)$.
\end{itemize}

We use the `big $O$' and the `small $o$' notation. The formula $f=O(g)$ or $f \ll g$ means that there exists a constant $C>0$ such that $|f(x)| \leq Cg(x)$ on the domain of $f$. In the second case, $f=o(g)$ means simply $\lim_{x \rightarrow \infty} f(x)/g(x) = 0 $. We see that the big $O$ makes sense also in the case when the considered functions are multivariate. In the `big $O$' or `$\ll$' notation the dependence on the variables $k, A_i, B_i, \vartheta, \vartheta_0$, the functions $W_0, W_r$,  and any other parameters declared as fixed, will not be mentioned explicitly. The notation $O_\epsilon (f(x))$ or $\ll_\epsilon$ means that the considered constant depends on the variable in the lower index.

\subsection{Constructing the sieve}

In order to deal with some small problematic primes, we use a device called the $W$-trick. For some $D_0$ we put 
\begin{equation}
W = \prod_{p < D_0} p
\end{equation}
and we further demand $n$ to lie in some residue class $\nu_0 \bmod W$ such that $(\mathcal{P}(\nu_0),W)=1$. We take
\begin{equation}
D_0 = \log \log \log N
\end{equation}
and without loss of generality we can assume that $N$ is so large that $D_0 > A_iB_j - A_jB_i$ and $D_0>A_i$  for all $i,j=1,\dots,k$. 

We construct the expanded multidimensional Selberg sieve in the following way:
\begin{equation}\label{SelSieve}
\lambda_{d_1, \dots, d_k} = 0 ~~~~\mbox{if}~~ \left( d , W \right) > 1  ~~\mbox{or}~~\mu \left( d \right)^2 \not= 1 ~~\mbox{or}~~  \exists_{j}~d/d_j \geq R ,
\end{equation}
where $d:= \prod_{i=1}^k d_i$ and $R$ to be chosen later. In the remaining cases we may choose the weights arbitrarily. 

The proper choice of the sieve weights should maximise the value of the sum (\ref{sum}). In the one-dimensional case the standard form of a weight is something similar to 
\begin{equation}
 \lambda_d \approx \mu (d) G \left(\frac{\log d}{\log R} \right)
 \end{equation}
for some smooth function $G$. In the multidimensional case we use weights of the form: 
\begin{equation}
 \lambda_{d_1, \dots , d_k} \approx \left( \prod_{i=1}^k \mu (d_i) \right) G \left(\frac{\log d_1}{\log R}, \dots , \frac{\log d_k}{\log R} \right).
 \end{equation}
The correlation with the M\"{o}bius function makes the sums containing sieve weights difficult to evaluate, since there are many positive terms and many negative ones, so we have a lot of cancellation to deal with. To overcome this obstacle we apply the \textit{reciprocity law} which transfers the $\lambda_{d_1,\dots ,d_k}$ into a set of new variables which are positive. Specifically, in the multidimensional case, we use the following lemma from \cite{Granville} which follows easily from the M\"{o}bius inversion. 
\begin{lemma}\label{RECIP}
Suppose that $L(d)$ and $Y(r)$ are two sequences of complex numbers, indexed by $d,r \in \mathbf{N}^k$, and supported on the $d_i$ and $r_i$ which are square-free, coprime to $W$ and satisfy $\prod_{i=1}^k d_i,~\prod_{i=1}^k r_i < R$. Then
\[ L(d_1, \dots , d_k) = \prod_{i=1}^k \mu (d_i)   \sum_{\substack{ r_1, \dots , r_k \\ \forall i~ d_i | r_i }} Y(r_1, \dots , r_k) \]
if and only if
\[ Y(r_1, \dots , r_k) = \prod_{i=1}^k \mu (r_i)   \sum_{\substack{ d_1, \dots , d_k \\ \forall i~ r_i | d_i }} L(d_1, \dots , d_k). \]
\end{lemma}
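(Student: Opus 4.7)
The statement is a $k$-variable upward Möbius inversion dressed up with $\prod_i \mu(d_i)$ factors, so my plan is to absorb those factors and reduce to the familiar scalar case. Concretely, I would set $\widetilde{L}(d_1,\ldots,d_k) := \prod_{i=1}^k \mu(d_i)\, L(d_1,\ldots,d_k)$. Multiplying the first identity by $\prod_i \mu(d_i)$ and using $\mu(d_i)^2 = 1$ on square-free $d_i$ collapses the left-hand side to $\widetilde{L}(d)$ and leaves
\[ \widetilde{L}(d_1, \ldots, d_k) \;=\; \sum_{\substack{r_1,\ldots,r_k \\ d_i \mid r_i \text{ for all } i}} Y(r_1, \ldots, r_k). \]

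To invert this, I would apply one-dimensional upward Möbius inversion in each coordinate separately; the finite support imposed by $\prod r_i < R$ makes each sum finite and legitimises the interchange of summations. Equivalently, the identity $\mathbf{1} * \mu = \delta$ tensor-factors across the $k$ coordinates. Either way, this yields
\[ Y(r_1, \ldots, r_k) \;=\; \sum_{\substack{d_1, \ldots, d_k \\ r_i \mid d_i \text{ for all } i}} \prod_{i=1}^k \mu(d_i/r_i)\, \widetilde{L}(d_1, \ldots, d_k). \]
Substituting back $\widetilde{L}(d) = \prod_i \mu(d_i) L(d)$, I would simplify the resulting $\mu$-product using the following observation: if $r_i \mid d_i$ and $d_i$ is square-free, then $\gcd(r_i, d_i/r_i) = 1$, so multiplicativity gives $\mu(d_i) = \mu(r_i)\mu(d_i/r_i)$; combined with $\mu(d_i/r_i)^2 = 1$ this forces $\mu(d_i)\mu(d_i/r_i) = \mu(r_i)$. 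Pulling $\prod_i \mu(r_i)$ outside the sum produces exactly the second identity.

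For the reverse direction I would run the same argument with the roles of $L$ and $Y$ interchanged: define $\widetilde{Y}(r) := \prod_i \mu(r_i) Y(r)$, recast the second identity as an upward convolution, Möbius-invert in each coordinate, and apply the same $\mu$ manipulation to recover the first identity. The bookkeeping is symmetric. The only point worth checking is that every intermediate sum remains inside the common support (square-free, coprime to $W$, product less than $R$), but this is automatic because both $L$ and $Y$ are \emph{declared} to be supported there, and the divisibility constraints in each sum only further restrict the range of summation. I do not expect any serious obstacle: the lemma really is the classical Möbius inversion multiplied through by appropriate signs, and the only substantive step is spotting the change of variable $\widetilde{L}$ (or $\widetilde{Y}$) that converts the $\mu$-twisted identity into a clean upward convolution.
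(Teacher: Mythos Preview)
Your proposal is correct and aligns with the paper's own treatment: the paper does not spell out a proof but simply attributes the lemma to Granville and remarks that it ``follows easily from the M\"obius inversion,'' which is precisely what you carry out in detail via the substitution $\widetilde{L}(d)=\prod_i\mu(d_i)L(d)$.
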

This lemma lets us define the Selberg weights indirectly in a convenient way. 
\begin{defi}[Selberg weights]\label{lambda_def}
We put
\begin{equation}\label{opis_lambda}
\lambda_{d_1,\dots ,d_k} =  \left( \prod_{i=1}^k \mu(d_i) d_i \right) \sum_{\substack{ r_1, \dots , r_k \\ \forall i~d_i|r_i }} \frac{ y_{r_1, \dots , r_k} }{ \prod_{i=1}^k \varphi( r_i )}
\end{equation}
for some $y_{r_1, \dots , r_k}$ being non-negative, square-free, coprime to $W$, and supported only on the $r_i$ satisfying $r/r_j < R$ for each $j=1,\dots , k$, where $r= \prod_{i=1}^k r_i$. By Lemma \ref{RECIP} the equality (\ref{opis_lambda}) is equivalent to
\begin{equation}\label{opis_y}
y_{r_1, \dots , r_k} = \left( \prod_{i=1}^k \mu(r_i) \varphi(r_i) \right) \sum_{\substack{ d_1, \dots , d_k \\ \forall i~r_i|d_i }} \frac{ \lambda_{d_1, \dots ,d_k} }{ \prod_{i=1}^k d_i }.
\end{equation}
For Lemma \ref{Ttilde} and further we shall take
\begin{equation}\label{opis_y2}
y_{r_1, \dots , r_k} =  \begin{cases}
        F \left( \frac{\log r_1}{\log R} , \dots ,  \frac{\log r_k}{\log R} \right) & \text{if } \mu \left( r \right)^2=1,~ (r,W)=1,~ \text{and}~  \forall_j~  r/r_j < R,  \\
        0 & \text{otherwise,}
        \end{cases}
\end{equation}
where $F$ is a nonzero function $F \colon \mathbf{R}^k \rightarrow \mathbf{R}_{\geq 0}$, supported on 
\[\mathcal{R}_k' = \{ (t_1, \dots , t_k) \in [0,1]^k \colon \sum_{\substack{ i=1 \\ i \not= j }}^k t_i \leq 1  \mbox{ for each $j=1, \dots , k$} \} \] 
and differentiable in the interior of this region. 
\end{defi}

\begin{remark} We may also notice that if the sieve weights $\lambda_{d_1 , \dots , d_k}$ are supported on the $d_i$ such that $\prod_{i=1}^k d_i < R $, then by the reciprocity law it is true that $\mbox{supp} \,(F) \subset \mathcal{R}_k$, where
\[ \mathcal{R}_k = \{ (t_1, \dots , t_k) \in [0,1]^k \colon \sum_{\substack{ i=1}}^k t_i \leq 1  \}.   \]
In such a case we assume only the differentiability of $F$ inside the interior of $\mathcal{R}_k$. We use this specific support while proving the $GEH$ case of Theorem \ref{MAIN}. It is less powerful than $\mathcal{R}_k' $ (it is obvious, because it is strictly smaller) but leads to much simpler calculations.
\end{remark}

It is also worth mentioning that this is also the support which was originally used by Maynard in \cite{Maynard}, where the multidimensional Selberg sieve appeared for the first time. 
We also define
\begin{align}
\lambda_{\max} &= \sup_{d_1, \dots, d_k} \left| \lambda_{d_1, \dots , d_k} \right|, ~~~~~~
y_{\max} = \sup_{r_1, \dots, r_k} \left| y_{r_1, \dots , r_k} \right|, \\
  F_{\max} &= \sup_{(t_1,\dots,t_k) \in [0,1]^k}  \left(  \left| F(t_1, \dots , t_k) \right| + \sum_{i=1}^k \left| \frac{\partial F}{\partial t_i} (t_1 , \dots , t_k) \right| \right) . \nonumber
\end{align}


\subsection{Sketch of the proof}

We start from 
\begin{equation}\label{1.22}
\mathcal{S}(\sigma; N, R_0, R, \mathcal{H}, \nu_0) = \sum_{\substack{ N < n \leq 2N \\ n \equiv \nu_0 \bmod W \\ \mu(\mathcal{P}(n))^2 =1}} \left( \sigma - \sum_{p| \mathcal{P}(n)} \left( 1 - \frac{\log p}{\log R_0} \right) \right) \Lambda_{\text{Sel}}^2 (n),
\end{equation}
where
\begin{equation}
 \Lambda_{\text{Sel}}^2 (n) = \left( \sum_{\substack{ d_1, \dots , d_k \\ \forall i ~ d_i | L_i (n)}} \lambda_{d_1, \dots , d_k} \right)^2
\end{equation}
and we choose some $\nu_0$ coprime to $W$. We also choose some positive constants $\vartheta$ and $\vartheta_0$ to be fixed later and put
\begin{equation}
R_0 = N^{\vartheta_0},~~~~~~~   R=N^{\vartheta}.
\end{equation}

We note that for $\mathcal{P}$ square-free we have
\begin{equation}\label{1.26}
\sum_{p| \mathcal{P}(n)} \left( 1 - \frac{\log p}{\log R_0} \right) = \Omega( \mathcal{P} (n)) - \frac{\log \mathcal{P}(n)}{\log R_0}.
\end{equation}
Now, we follow the reasoning of Maynard described in equations \cite[(5.4)--(5.11)]{MaynardK}. Notice that the precise shape of the sieve, the $W$-trick, and the usage of identity (\ref{1.11}) do not affect these equations so we may just rewrite the results with our choice of sieve weights:
\begin{equation}
\mathcal{S} \geq \sigma \mathcal{S}_0 - \mathcal{S}' - T_0 + \sum_{j=1}^k \sum_{r=1}^h \sum_{s=1}^r T_{r,s}^{(j)},
\end{equation}
for any $h \in \mathbf{N}$, where
\begin{align} 
\begin{split}
\mathcal{S}_0 &= \sum_{\substack {N < n \leq 2N \\ n \equiv \nu_0 \bmod W}} \Lambda_{\text{Sel}}^2 (n), \\
\mathcal{S}' &= \sum_{\substack {N < n \leq 2N \\ n \equiv \nu_0 \bmod W \\ \mu \left( \mathcal{P}(n) \right)^2\not=1 }} \left( \sigma - \sum_{p| \mathcal{P}(n)} \left( 1 - \frac{\log p}{\log R_0} \right) \right) \Lambda_{\text{Sel}}^2 (n), \\ 
T_0 &= \sum_{\substack {N < n \leq 2N \\ n \equiv \nu_0 \bmod W}} \sum_{\substack{ p| \mathcal{P}(n)  \\ p \leq R_0}}  \left( 1 - \frac{\log p}{\log R_0} \right)  \Lambda_{\text{Sel}}^2 (n), \\
 T_{r,s}^{(j)} &=  \sum_{\substack {N < n \leq 2N \\ n \equiv \nu_0 \bmod W}} \chi_{r,s} (L_j(n)) \Lambda_{\text{Sel}}^2 (n),
\end{split}
\end{align}
and
\begin{equation}\label{1.31}
 \chi_{r,s} (n) = 
\begin{cases}
  s -\frac{\log N}{\log R_0} + \sum_{i=1}^{r-s} \frac{\log p_i}{\log R_0} , & \text{if~} n=p_1 \dots p_r \text{~with~ }  \\
& ~~  n^\epsilon <  p_1 < \dots < p_{r-s} \leq n^{\vartheta_0} < p_{r-s+1} < \dots < p_r,\\
\\
0, & \text{otherwise. }
\end{cases}
\end{equation}

In order to calculate $\mathcal{S}_0, \mathcal{S}', T_0$ and $T_{r,s}^{(j)}$ we need multidimensional analogues of Propositions $5.1$$-$$5.4$ from \cite{MaynardK}.

\begin{prop}[Analogue of Proposition 5.1 from \cite{MaynardK}]\label{5.1} Let $W_0 \colon [0, \frac{\vartheta_0}{\vartheta}] \rightarrow \mathbf{R}_{\geq 0}$ be a piecewise smooth non-negative function. Assume one of the following hypotheses:
\begin{enumerate}
\item $\vartheta_0 + 2 \vartheta < 1$ and $\emph{supp} \,(F) \subset \mathcal{R}_k,$
\item $\vartheta_0 + \frac{2k}{k-1} \vartheta < 1$ and $\emph{supp} \,(F) \subset \mathcal{R}_k',$
\end{enumerate}
Then, for every $0 < \epsilon <  \vartheta_0/\vartheta  $, we have
\begin{align*} 
\Sigma_0 :&= \sum_{\substack {N < n \leq 2N \\ n \equiv \nu_0 \bmod W}}\left(  \sum_{\substack{ p| \mathcal{P}(n)  \\ p \leq R_0}} W_0 \left( \frac{\log p}{\log R} \right) \right) \Lambda_{\emph{Sel}}^2 (n)  \\
&= ~ \frac{\varphi(W)^k N (\log R)^{k}   }{W^{k+1}} \sum_{j=1}^k J_0^{(j)}  + O\left( \frac{F_{\max}^2 \varphi(W)^k N (\log R)^k (\epsilon + \frac{|\log \epsilon |}{D_0})}{W^{k+1}} \right),
 \end{align*}
 where
 \[ J_0^{(j)} = \int \limits_\epsilon^{\vartheta_0/\vartheta} \frac{W_0 (y)}{y} I_{0}^{(j)}  (y ) \, dy \]
and
 \[  I_{0}^{(j)}  (y) =  \int \limits_0^1 \cdots \int \limits_0^1 \left(F(t_1,\dots,t_k) -  F \left( t_1, \dots , t_{i-1} , 
t_j + y, t_{i+1}, \dots , t_k \right) \right)^2 \, dt_1 \, \dots  \, dt_k.  \] 
\end{prop}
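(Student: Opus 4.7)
The plan is to decompose $\Sigma_0$ according to which linear form $L_j$ contains the small prime $p$, evaluate each resulting ``twisted'' Selberg sieve sum by the same type of calculation that produces the main $S_0$ asymptotic, and convert the prime average into the stated integral via Mertens' theorem.

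First, because $D_0 > |A_i B_j - A_j B_i|$ for all $i \neq j$, a prime $p > D_0$ can divide at most one of the values $L_i(n)$, so no double-counting occurs and
\[ \Sigma_0 \;=\; \sum_{j=1}^k \Sigma_0^{(j)}, \qquad \Sigma_0^{(j)} \;=\; \sum_{\substack{p \leq R_0 \\ (p,W)=1}} W_0\!\left(\frac{\log p}{\log R}\right) \mathcal{T}^{(j)}(p), \]
where $\mathcal{T}^{(j)}(p) := \sum_{\substack{N < n \leq 2N,\; n \equiv \nu_0 (W) \\ p \,|\, L_j(n)}} \Lambda_{\mathrm{Sel}}^2(n)$. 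The contribution from primes with $\log p/\log R < \epsilon$ will be absorbed into the error term by the crude bound $I_0^{(j)}(y) = O(F_{\max}^2 \, y^2)$ as $y \to 0$, together with Mertens' estimate $\sum_{p \leq N^\epsilon, (p,W)=1} 1/p = O(\log(\epsilon/D_0))$; this produces the $F_{\max}^2 \epsilon$ piece of the error.

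Next, for each admissible pair $(p,j)$, I would expand $\Lambda_{\mathrm{Sel}}^2$ as a double sum over $(d,e)$ and count integers $n$ in the combined residue system modulo $W \cdot p \cdot \prod_{i=1}^k [d_i,e_i]$. Under either hypothesis (i) or (ii) this combined modulus is $O(N^{1-\delta})$, so one obtains an unconditional asymptotic with $O(1)$ error per class. Transferring the $\lambda$-weights to $y$-weights via the reciprocity formulas (\ref{opis_lambda})--(\ref{opis_y}) and carrying out the Euler-product sums over the coprime multiplicative variables exactly as in the derivation of the main asymptotic for $S_0$ in \cite{Maynard}, but with the $r_j$-sum split according to whether $p \,|\, r_j$, gives
\[ \mathcal{T}^{(j)}(p) \;=\; \frac{\varphi(W)^k N (\log R)^k}{p\, W^{k+1}} \, I_0^{(j)}\!\left(\frac{\log p}{\log R}\right) \;+\; O\!\left(\frac{F_{\max}^2 \varphi(W)^k N (\log R)^k}{p\, W^{k+1} D_0}\right). \]
The squared-difference shape of $I_0^{(j)}$ is the crux of the calculation: the $p \nmid r_j$ portion of the sum contributes an $F(t_1,\dots,t_j,\dots,t_k)^2$-integral, the $p \,|\, r_j$ portion contributes the same integral but with $t_j$ replaced by $t_j + \log p/\log R$ (since writing $r_j = p r_j'$ shifts the logarithmic argument of $F$), and the cross-term coming from the $\lambda_d \lambda_e$ off-diagonal piece with exactly one of $r_j, r_j'$ divisible by $p$ produces precisely $-2F \cdot F^{\text{shift}}$. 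Completing the square then gives $(F - F^{\text{shift}})^2$.

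Finally, inserting this asymptotic into $\Sigma_0^{(j)}$ and changing variables $y = \log p/\log R$ reduces the prime sum to $J_0^{(j)}$ via Mertens' theorem in the form $\sum_{p, \epsilon \log R < \log p \leq \vartheta_0 \log R} g(\log p/\log R)/p = \int_\epsilon^{\vartheta_0/\vartheta} g(y)/y \, dy + O(1/D_0)$, applied with $g = W_0 \cdot I_0^{(j)}$; the $|\log \epsilon|/D_0$ part of the error term originates here. The main obstacle I expect is the second step, specifically tracking the shift on the $j$-th logarithmic variable cleanly through the reciprocity transformation and isolating the cross-term that yields the $(F - F^{\text{shift}})^2$ identity; once this algebraic identity is in place, the remaining steps are routine bookkeeping analogous to Maynard's treatment of $T_0$ in the one-form setting.
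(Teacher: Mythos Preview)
Your overall strategy matches the paper's: decompose by $j$, write the inner weighted count as $(N/W)\cdot T_{1,\dots,1,p,1,\dots,1}/p$ plus a trivially small remainder, convert $T_{1,\dots,1,p,1,\dots,1}$ to the squared-difference integral $I_0^{(j)}(\log p/\log R)$ via reciprocity (the paper packages this as Lemma~\ref{Lemma2.2} together with Lemma~\ref{Ttilde}), and finish with partial summation over~$p$.

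There is, however, a genuine gap in your handling of the very small primes $D_0 < p \leq R^\epsilon$. Your asymptotic for $\mathcal{T}^{(j)}(p)$ carries an error of size $F_{\max}^2\,\varphi(W)^k N (\log R)^k / (p\,W^{k+1} D_0)$, and summing this over \emph{all} primes up to $R_0$ gives
\[
\frac{F_{\max}^2\,\varphi(W)^k N (\log R)^k}{W^{k+1} D_0}\sum_{D_0 < p \leq R_0}\frac{1}{p}\ \asymp\ \frac{F_{\max}^2\,\varphi(W)^k N (\log R)^k}{W^{k+1}}\cdot\frac{\log\log N}{D_0},
\]
which is unbounded relative to the main term and is certainly not dominated by $\epsilon + |\log\epsilon|/D_0$. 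Your bound $I_0^{(j)}(y)=O(F_{\max}^2 y^2)$ only controls the \emph{main-term} contribution from these small primes; it says nothing about the accumulated $D_0^{-1}$ error. (Incidentally, the Mertens estimate you quote is also off: $\sum_{D_0<p\le N^\epsilon}1/p\sim\log\log N$, not $O(\log(\epsilon/D_0))$.) The paper closes this gap via a separate direct bound, Lemma~\ref{2.9}, namely $T_{1,\dots,1,p,1,\dots,1} \ll F_{\max}^2\,\varphi(W)^k (\log p)(\log R)^{k-1}/W^k$, which is free of the $D_0^{-1}$ factor. Proving that bound requires passing to auxiliary weights $\breve y_{r_1,\dots,r_k}=y_{r_1,\dots,r_k}-y_{r_1,\dots,pr_m,\dots,r_k}$ and exploiting $|\breve y|\ll F_{\max}\log p/\log R$ on the bulk of the support together with a boundary estimate; this ingredient is genuinely needed and is missing from your sketch.
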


\begin{prop}[Analogue of Proposition 5.2 from \cite{MaynardK}]\label{5.2}
Given $\epsilon>0$ and $r \in \mathbf{N}$, let
\[ \mathcal{A}_r:= \left\{  x\in [0,1]^{r-1} : \epsilon < x_1 < \dots < x_{r-1}, ~\sum_{i=1}^{r-1} x_i < \min( 1 - \vartheta, 1- x_{r-1})     \right\}. \]
Let $W_r \colon [0,1]^{r-1} \rightarrow \mathbf{R}_{\geq 0}$ be a piecewise smooth function supported on $\mathcal{A}_r$, such that
\[ \frac{\partial}{ \partial x_j}W_r (x) \ll W_r (x)~~~~\text{uniformly for $x \in \mathcal{A}_r$}. \]
Let
\[ {\beta}_r (n) = 
 \begin{cases}
        {W}_r \left(\frac{\log p_1}{\log n}, \dots , \frac{\log p_{r-1}}{\log n} \right) & \text{for } n=p_1\dots p_{r}, \mbox{with } p_1 \leq \dots \leq p_{r},\\
        0 & \text{otherwise.}
 \end{cases} \]
 Assume $GEH[2 \vartheta]$. We have
\[ \sum_{\substack {N < n \leq 2N \\ n \equiv \nu_0 \bmod W}} \beta_r (L_j(n))  \Lambda_{\emph{Sel}}^2 (n) =
 \frac{\varphi(W)^k N (\log R)^{k+1}}{W^{k+1} \log N} J_r^{(j)} 
+ O_\epsilon \left(\frac{F_{\max}^2 \varphi(W)^k N (\log R)^{k} }{W^{k+1} D_0} \right), \]
 where
 \begin{align*}
 J_r^{(j)} &= ~ \int \limits_{(x_1, \dots , x_{r-1}) \in \mathcal{A}_r}  \frac{W_r (x_1, \dots , x_{r-1}) I^{(j)}(x_1\vartheta^{-1}, \dots, x_{r-1}\vartheta^{-1})}{\left( \prod_{i=1}^{r-1} x_i \right)  \left( 1 - \sum_{i=1}^{r-1} x_i \right) }, \\
 I^{(j)} (x_1, \dots , x_{r-1}) &= ~   \int \limits_0^1 \dots \int \limits_0^1 \widetilde{I}^{(j)}  (x_1, \dots , x_{r-1}) ^2 \, dt_1 \, \dots \, dt_{j -1} \, dt_{j +1}\, \dots \,dt_k, \\
 \widetilde{I}^{(j)}  (x_1, \dots , x_{r-1}) &= ~ \int \limits_0^1 \sum_{J \subset \{1, \dots , r-1\}}  (-1)^{|J|} F \left( t_1, \dots, t_{j -1}, t_j + \sum_{i \in J} x_j, t_{j +1}, \dots , t_k\right) \, dt_j . 
\end{align*}

\end{prop}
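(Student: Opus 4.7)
The plan is to follow the standard route for the multidimensional Selberg sieve: expand the square defining $\Lambda_{\text{Sel}}^2$, switch to the dual variables $y_r, y_s$ via the reciprocity (\ref{opis_lambda}), evaluate the resulting inner sum over $n$ in the $j$-th coordinate using Theorem \ref{GEHwniosek}, and finally translate the arithmetic main term into the integrals $J_r^{(j)}$ via (\ref{opis_y2}) and a change of variable.

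First I would expand
\[
\Lambda_{\text{Sel}}^2(n) = \sum_{d,e} \lambda_{d_1,\dots,d_k}\,\lambda_{e_1,\dots,e_k}\prod_{i=1}^k \mathbf{1}_{[d_i,e_i]\mid L_i(n)},
\]
and use the support restrictions on $\lambda$, together with the hypothesis $D_0 > |A_iB_j - A_jB_i|$, to reduce at negligible cost to the diagonal configuration in which the moduli $[d_i,e_i]$ are pairwise coprime and coprime to $W$. Substituting (\ref{opis_lambda}), I would interchange summation so that $r,s$ are the outer variables; for each $i\neq j$, the inner sums over $d_i,e_i$ then collapse through standard multiplicative identities to a factor depending only on $r_i,s_i$, and the sum over $n$ in a fixed residue class modulo $W\prod_{i\neq j}[r_i,s_i]$ produces the familiar Euler product giving the prefactor $\varphi(W)^k/W^{k+1}$ together with a contribution to the $F$-integrand on the coordinates $i\neq j$.

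The crucial step is the $j$-th coordinate. Here, $\beta_r(L_j(n))$ restricts $L_j(n)$ to a product of exactly $r$ primes of prescribed logarithmic sizes, which places the sum precisely in the framework of Theorem \ref{GEHwniosek} with parameter $\theta = 2\vartheta$, since the combined modulus $W\prod_{i\neq j}[d_i,e_i]$ stays below $R^2 = N^{2\vartheta}$ up to a factor absorbed by $D_0 = \log\log\log N$. The divisibility conditions $d_j\mid L_j(n)$ and $e_j\mid L_j(n)$ force $d_j,e_j$ to be sub-products of $p_1,\dots,p_r$; after M\"obius inversion on these sub-products and using $\log p_i / \log R = x_i/\vartheta + o(1)$, they generate exactly the alternating sum $\sum_{J\subset\{1,\dots,r-1\}}(-1)^{|J|}$ and the shift $t_j+\sum_{i\in J}x_i\vartheta^{-1}$ appearing in $\widetilde{I}^{(j)}$ (the index $r$ is absent from the summation because the largest prime is forced by the identity $p_1\cdots p_r = L_j(n)$). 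The squaring inherent in $\Lambda_{\text{Sel}}^2$ then becomes the squared $\widetilde{I}^{(j)}$ inside $I^{(j)}$, and the density factor $1/\bigl((1-\sum x_i)\prod x_i\bigr)$ in $J_r^{(j)}$ emerges from the standard density of $r$-tuples of primes of prescribed logarithmic sizes isolated by Theorem \ref{GEHwniosek}.

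Combining all contributions and passing back to $F$ via (\ref{opis_y2}) with the rescaling $t_i = \log r_i / \log R$ recovers the integrals stated in the proposition. The main obstacle I foresee lies in this third step, specifically in showing that the truncation induced by the support of $F$ (i.e.\ the constraints coming from $\mathcal{R}_k$ or $\mathcal{R}_k'$) interacts cleanly with the shifts $t_j + \sum_{i\in J}x_i\vartheta^{-1}$ when $(x_1,\dots,x_{r-1})\in\mathcal{A}_r$, and that the error term in Theorem \ref{GEHwniosek} can be summed uniformly in $r,s$ with the $\epsilon$-dependence controlled. The factor $1/D_0$ in the stated error term should emerge in the usual way from a Mertens-type estimate for the $W$-trick tail.
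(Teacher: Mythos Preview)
Your high-level intuition is sound, but the route you describe has a structural gap and differs from what the paper actually does.

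The order of operations you propose does not work: substituting (\ref{opis_lambda}) and bringing the $y$-variables outside \emph{before} evaluating the $n$-sum leaves the inner sums over $d_i,e_i$ still carrying the constraints $[d_i,e_i]\mid L_i(n)$, which depend on $n$, so they cannot ``collapse to a factor depending only on $r_i,s_i$'' at that stage. More seriously, you cannot invoke Theorem~\ref{GEHwniosek} with $[d_j,e_j]$ sitting in the modulus, since that result equidistributes over residue classes \emph{coprime} to the modulus, whereas here $L_j(n)$ must be \emph{divisible} by $[d_j,e_j]$. The paper handles both issues at once via Thorne's decomposition: split the sum according to how many prime factors of $L_j(n)$ lie in $[d_j,e_j]$, write $L_j(n)=qt$ with $q=[d_j,e_j]$ fixed, and apply a Bombieri--Vinogradov-type lemma (derived from Theorem~\ref{GEHwniosek}) to the $t$-sum with modulus $[A_j,W]\,\mathrm{rad}\,A_j\prod_{i\ne j}[d_i,e_i]\ll N^{2\vartheta}$. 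The support condition $\sum_i x_i<1-\vartheta$ in $\mathcal{A}_r$ is precisely what forces $p_r>R$, hence $p_r\nmid[d_j,e_j]$; this size constraint, not ``the identity $p_1\cdots p_r=L_j(n)$'', is the reason the subset sum runs only over $\{1,\dots,r-1\}$. After summing over the Thorne parameter $h$ the main terms recombine into an expression in $T_q^{(j)}$ (see (\ref{2.2})) with $q=p_1\cdots p_{r-1}$, and only \emph{then} does the paper pass to the $y$- and $F$-variables, via the separately established Lemmas~\ref{Lemma2.3}, \ref{Lemma2.4}, \ref{lemma_wtilde} and \ref{calkaI1} together with \cite[Lemma~8.3]{MaynardK}, to obtain $J_r^{(j)}$.
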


\begin{prop}[Analogue of Proposition 5.3 from \cite{MaynardK}]\label{5.3}
Assume one of the following hypotheses:
\begin{enumerate}
\item $\vartheta < \frac{1}{2} - \eta$ and $\emph{supp} \,(F) \subset \mathcal{R}_k$
\item $\vartheta < \frac{k-1}{2k} - \eta$ and $\emph{supp} \,(F) \subset \mathcal{R}_k'$
\end{enumerate}
 for some positive $\eta$. Then, we have
\[ \sum_{\substack {N < n \leq 2N \\ n \equiv \nu_0 \bmod W \\ \mu \left( \mathcal{P}(n) \right)^2 \not=1 }} \Lambda_{\emph{Sel}}^2 (n) \ll_\eta
\frac{ F_{\max}^2 \varphi (W)^k N (\log R)^{k}  }{W^{k+1}D_0}. \]
\end{prop}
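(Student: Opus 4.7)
\emph{Proof plan.} The strategy is to pin down the arithmetic reason why $\mu(\mathcal{P}(n))^2\neq 1$, isolate a single squared prime, and then bound the resulting expression by evaluating the Selberg expansion in an arithmetic progression modulo $p^2$ and summing a $1/p^2$ tail. If $\mu(\mathcal{P}(n))^2\neq 1$ some prime $p$ satisfies either (a) $p^2\mid L_i(n)$ for some $i$, or (b) $p\mid L_i(n)$ and $p\mid L_j(n)$ for some $i\neq j$. The congruence $n\equiv \nu_0\pmod W$ with $(\mathcal P(\nu_0),W)=1$ forces $p\geq D_0$, and in case (b) $p$ must divide the nonzero integer $A_iB_j-A_jB_i$, which is ruled out by $D_0>|A_iB_j-A_jB_i|$. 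Hence it suffices to bound
\[
\sum_{i=1}^{k}\sum_{\substack{p\geq D_0\\ p^2\leq CN}}\sum_{\substack{N<n\leq 2N\\ n\equiv\nu_0(W)\\ p^2\mid L_i(n)}}\Lambda_{\mathrm{Sel}}^2(n),
\]
where $C$ depends only on the coefficients of the forms.

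For fixed $i$ and $p$, expanding the square of $\Lambda_{\mathrm{Sel}}$ and applying the Chinese Remainder Theorem produces
\[
\sum_{\substack{n\equiv\nu_0(W)\\ p^2\mid L_i(n)}}\Lambda_{\mathrm{Sel}}^2(n)=\sum_{d,e}\lambda_{d}\lambda_{e}\left(\frac{N\, g(p,d_i,e_i)^{-1}}{W p^2\prod_{j}[d_j,e_j]}+O(1)\right),
\]
where $g(p,d_i,e_i)\in\{1,p,p^2\}$ records whether $p$ coincides with a prime factor of $[d_i,e_i]$. The main-term $(d,e)$-sum is (up to the bounded correction) the same one used in the proof of the analogue of Proposition \ref{5.1} to estimate $S_0$, hence is $O(F_{\max}^2\varphi(W)^kN(\log R)^k/W^{k+1})$. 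Inserting the factor $1/p^2$ and summing over $p\geq D_0$ yields $\sum_{p\geq D_0}1/p^2\ll 1/D_0$, producing exactly the bound claimed in the proposition.

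The delicate point is showing that the accumulated $O(1)$ errors are also absorbed into this bound. My plan is to split according to whether the total modulus $Wp^2\prod_{j}[d_j,e_j]$ is much smaller than $N$ (so the main term is accurate and the error lower order) or comparable to $N$ (so the AP count is itself $O(1)$, but only a thin family of $(d,e)$ contributes). Under hypothesis~(1) the support condition $\mathrm{supp}(F)\subset\mathcal{R}_k$ forces $\prod_{j}[d_j,e_j]\leq R^2$, putting the transition near $p\sim N^{1/2-\vartheta}$; under hypothesis~(2) the larger support $\mathcal{R}_k'$ only gives $\prod_{j}[d_j,e_j]\leq R^{2k/(k-1)}$, pushing the transition to $p\sim N^{1/2-k\vartheta/(k-1)}$. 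The conditions $\vartheta<1/2-\eta$ and $\vartheta<(k-1)/(2k)-\eta$ are precisely what keep the critical $p$ strictly positive with room to spare, allowing a clean absorption of the error into the claimed main term.

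The main obstacle is the bookkeeping for the overlap case when $p$ divides some $d_i$ or $e_i$: this requires splitting the $(d,e)$-sum according to the $p$-adic valuations of $d_i$ and $e_i$ and applying the standard Selberg-type bounds on each piece. Once this routine but delicate treatment is in place, the remaining inequalities reduce to the same kind of moment estimates already used in the proofs of Propositions~\ref{5.1} and~\ref{5.2}.
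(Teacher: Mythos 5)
Your overall strategy matches the paper's: restrict to $p \geq D_0$, note that $p \mid L_i(n)$ and $p \mid L_j(n)$ for $i\neq j$ is impossible since $D_0 > |A_iB_j - A_jB_i|$, so it suffices to bound $\sum_j \sum_{p\geq D_0} \sum_{n:\, p^2\mid L_j(n)}\Lambda_{\mathrm{Sel}}^2(n)$, then apply the Chinese Remainder Theorem and exploit $\sum_{p\geq D_0}p^{-2}\ll 1/D_0$. The paper does exactly this, except it splits the range of $p$ at $N^\eta$ (bounding the tail $p\geq N^\eta$ trivially via Lemma \ref{szacowanieSelbergow} and $\sum_{p\geq N^\eta}p^{-2}\ll N^{-\eta}$, and bounding the $O(1)$ errors for $p<N^\eta$ via $\sum_{r<R^2N^\eta}\tau_{2k+1}(r)$); your split by modulus size should also work but is less direct. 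A few corrections and a more substantial concern follow.

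First, two small inaccuracies: since $d_i,e_i$ are square-free, $[d_i,e_i,p^2]$ equals $p^2[d_i,e_i]$ if $p\nmid[d_i,e_i]$ and $p[d_i,e_i]$ otherwise, so the correction factor in the numerator lies in $\{1,p\}$, not $\{1,p,p^2\}$. Also, the analogue you want to cite for $S_0$ is Proposition \ref{5.4}, not \ref{5.1}.

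The more serious issue is the claim that the main-term $(d,e)$-sum is ``the same one used ... to estimate $S_0$'' and hence $O(F_{\max}^2\varphi(W)^kN(\log R)^k/W^{k+1})$ uniformly in $p$. This is not automatic: when $p\mid[d_j,e_j]$ the numerator carries an extra factor $p$ that a naive comparison with Proposition \ref{5.4} does not account for, and one must check that restricting to $p\mid d_j$ or $p\mid e_j$ cancels it. The paper does not wave at this: it identifies the inner sum as exactly $T_{1,\dots,1,p,1,\dots,1}$, passes to $\widetilde T_{1,\dots,1,p,1,\dots,1}$ via Lemma \ref{Lemma2.2} (with error $\ll F_{\max}^2\varphi(W)^k(\log R)^k/(W^kD_0)$), and then bounds $\widetilde T_{1,\dots,1,p,1,\dots,1}$ via Lemma \ref{Ttilde} together with the mean-value estimate $\int\cdots\int G \ll F_{\max}^2(\log p/\log R)$ coming from the smoothness of $F$. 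In fact the paper obtains the sharper bound $\widetilde T_{1,\dots,1,p,1,\dots,1}\ll F_{\max}^2\varphi(W)^k(\log R)^{k-1}(\log p)/W^k$, which is more than the crude $\ll F_{\max}^2\varphi(W)^k(\log R)^k/W^k$ your plan needs, but the point is that it establishes the uniform-in-$p$ bound by a genuine argument. You acknowledge this as ``the main obstacle'' and call it ``routine but delicate''; you should at least note that the needed statement is precisely the content of Lemmas \ref{Lemma2.2} and \ref{Ttilde}, so that your plan is closed. As written, the plan is plausible but the key uniform estimate is asserted rather than proved.
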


The first case of the next result is a part of \cite[Proposition 4.1]{Maynard}. The proof of the second case is analogous.

\begin{prop}[Analogue of Proposition 5.4 from \cite{MaynardK}]\label{5.4}
Assume one of the following hypotheses:
\begin{enumerate}
\item $\vartheta < \frac{1}{2} - \eta$ and $\emph{supp} \,(F) \subset \mathcal{R}_k$
\item $\vartheta < \frac{k-1}{2k} - \eta$ and $\emph{supp} \,(F) \subset \mathcal{R}_k'$
\end{enumerate}
for some positive $\eta$. Then, we have
\[  \sum_{\substack {N < n \leq 2N \\ n \equiv \nu_0 \bmod W  }} \Lambda_{\emph{Sel}}^2 (n) = \frac{\varphi(W)^k N (\log R)^{k} }{W^{k+1}} J + O\left( \frac{F^2_{\max} \varphi(W)^k N (\log R)^{k} }{W^{k+1}D_0} \right),  \]
where
\[J = \int \limits_0^1 \dots \int \limits_0^1 F(t_1, \dots , t_k)^2 \, dt_1 \, \dots \, dt_k.  \]
\end{prop}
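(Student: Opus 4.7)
The plan is to follow the standard multi-dimensional Selberg sieve computation of Maynard, extending it to cover the enlarged support $\mathcal{R}_k'$ in case (2). First I open the square
\[ \Lambda_{\text{Sel}}^2(n) = \sum_{d_1,\dots,d_k}\sum_{e_1,\dots,e_k}\lambda_{d_1,\dots,d_k}\lambda_{e_1,\dots,e_k}\prod_{i=1}^k \mathbf{1}_{[d_i,e_i]\mid L_i(n)} \]
and swap the order of summation. Admissibility, the choice of $\nu_0$ coprime to $W$, and the hypothesis $D_0>|A_iB_j-A_jB_i|$ together ensure that for every surviving tuple $(d,e)$ the combined congruences define a unique residue class modulo $q:=W\prod_{i=1}^k[d_i,e_i]$, so the inner sum over $n$ equals $N/q+O(1)$.

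Next, I check that $q\ll N^{1-\eta}$ so that the $O(1)$ remainder is absorbed. In case (1), the support $\mathcal{R}_k$ forces $\prod_i d_i,\prod_i e_i\leq R$, giving $q\ll WR^2\ll N^{2\vartheta+o(1)}$; in case (2), the condition $\sum_{i\neq j}t_i\leq 1$ for each $j$ yields $\prod_i d_i\leq R^{k/(k-1)}$, so $q\ll N^{2k\vartheta/(k-1)+o(1)}$. Either hypothesis on $\vartheta$ makes $q\ll N^{1-\eta}$, and summing the $O(1)$ errors over the $\ll R^{2}(\log R)^{2k-2}$ admissible tuples, weighted by $\lambda_{\max}^2\ll F_{\max}^2(\log R)^{2k}$, contributes only $O(F_{\max}^2 N^{1-\eta/2})$, comfortably absorbed in the claimed error.

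For the main term $(N/W)\sum_{d,e}\lambda_d\lambda_e/\prod_i[d_i,e_i]$, I substitute (\ref{opis_lambda}) and swap the order of the $d,e,r,s$ sums. Using $d_ie_i/[d_i,e_i]=(d_i,e_i)=\sum_{g_i\mid(d_i,e_i)}\varphi(g_i)$ together with the identity $\sum_{d\mid r,\,g\mid d}\mu(d)=\mu(r)\mathbf{1}_{g=r}$ valid for squarefree $r$, the inner $d,e$-sums collapse coordinate by coordinate to the diagonal $r=s$, producing exactly $\sum_r y_r^2/\prod_i\varphi(r_i)$. A standard evaluation of the local Euler factors at primes $p\geq D_0$ then inserts a multiplicative $1+O(1/D_0)$, which is the source of the $1/D_0$ factor in the stated remainder.

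Finally, plugging in $y_{r_1,\dots,r_k}=F(\log r_1/\log R,\dots,\log r_k/\log R)$ and applying summation by parts in each coordinate with the elementary asymptotic
\[ \sum_{\substack{r\leq X\\(r,W)=1,\,\mu(r)^2=1}}\frac{1}{\varphi(r)} = \frac{\varphi(W)}{W}\log X + O\!\left(\frac{\varphi(W)}{W}\right), \]
recasts the arithmetic sum as a Riemann sum converging to $(\varphi(W)/W)^k(\log R)^k J$; the partial derivatives of $F$ appear in the error through the summation-by-parts step, producing the $F_{\max}^2$ factor. The only substantive obstacle is case (2): the enlarged support $\mathcal{R}_k'$ lets $q$ reach $R^{2k/(k-1)}$, which is exactly why $\vartheta<(k-1)/(2k)-\eta$ is needed in place of $\vartheta<1/2-\eta$. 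Once $q\ll N^{1-\eta}$ is secured, the diagonalisation and the Riemann-sum approximation proceed formally identically to Maynard's original $\mathcal{R}_k$ treatment.
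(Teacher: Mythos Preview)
Your approach is the right one and matches what the paper does: it simply cites \cite[Proposition~4.1]{Maynard} for case~(1) and declares case~(2) analogous, so reproducing Maynard's computation is exactly what is required. Two small points are worth tightening. First, in case~(2) the admissible tuples satisfy $\prod_i d_i\prod_i e_i\leq R^{2k/(k-1)}$, not $R^2$, so your count ``$\ll R^2(\log R)^{2k-2}$'' should be replaced by $\ll R^{2k/(k-1)+o(1)}$; the hypothesis $\vartheta<\frac{k-1}{2k}-\eta$ still makes this $\ll N^{1-2\eta+o(1)}$ and the conclusion is unchanged. Second, the diagonalisation does not collapse ``coordinate by coordinate'' as written, because the constraint $\Sigma'$ (that the $[d_i,e_i]$ be pairwise coprime) couples the coordinates; one must first remove it by inserting $\prod_{i\neq j}\sum_{s_{i,j}\mid d_i,e_j}\mu(s_{i,j})$, after which the $s_{i,j}=1$ term gives your diagonal sum $\sum_r y_r^2/\prod_i\varphi(r_i)$ and the terms with some $s_{i,j}>D_0$ contribute the $O(1/D_0)$ error you mention. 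This is Lemma~\ref{Lemma2.2} of the paper with $\delta_1=\dots=\delta_k=1$.
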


The shape of conditions from Propositions \ref{5.1}--\ref{5.4} lead us to two different cases: $\mbox{supp} \,(F) \subset \mathcal{R}_k$ or $\mbox{supp} \,(F) \subset \mathcal{R}_k'$. These considerations motivate the following hypotheses. The first one is related to the unextended multidimensional sieve and is useful while proving the $GEH$ part of Theorem \ref{MAIN}. 

\begin{hyp}\label{A} Let $\vartheta_0$, $\vartheta$, and $F$ be such that
\begin{enumerate}
\item $\emph{supp} \,(F) \subset \mathcal{R}_k,$ 
\item $\vartheta \leq \vartheta_0$,
\item $\vartheta_0 + 2 \vartheta < 1$, 
\item $\vartheta < \frac{1}{2}$,
\item $GEH[2 \vartheta ]$.
\end{enumerate}
\end{hyp}
The second hypothesis is used in the proof of unconditional part of Theorem \ref{MAIN} and is related to the situation when our sieve has extended support.

\begin{hyp}\label{B}  Let $\vartheta_0$, $\vartheta$, and $F$ be such that
\begin{enumerate}
\item $\emph{supp} \,(F) \subset \mathcal{R}_k',$ 
\item $\vartheta \leq \vartheta_0$,
\item  $\vartheta_0 + \frac{2k}{k-1} \vartheta < 1$, 
\item  $\vartheta < \frac{k-1}{2k}$,
\item $GEH[2 \vartheta ]$.
\end{enumerate}
\end{hyp}
Note that if $\vartheta < 1/4$, then the fifth assumption in Hypothesis 2 holds by Theorem \ref{GBV}.

\subsection*{Acknowledgements}

I would like to thank my advisors, Maciej Radziejewski and Piotr Achinger for many valuable comments, corrections and discussions.  From the strictly mathematical side, I would also like to express my gratitude towards James Maynard whose work was a great inspiration for my thesis and towards Terence Tao whose blog allowed me to grasp many ideas concerning the analytic number theory and mathematics in general.


\section{Lemmata}

\subsection{The quantities $T_{\delta_1, \dots , \delta_k}$ and $T^{(\ell)}_{\delta}$}

In the next chapters we frequently apply a useful lemma proven in \cite[(5.9)]{Maynard} enabling us to estimate the Selberg weights (in \cite{Maynard} the author assumes $\mbox{supp} \, (F) \subset \mathcal{R}_k$, but in the $\mbox{supp} \, (F) \subset \mathcal{R}_k'$ case the proof is analogous).
\begin{lemma}\label{szacowanieSelbergow}
We have
\[ \lambda_{\max} \ll F_{\max} (\log R)^{k} .
\]
\end{lemma}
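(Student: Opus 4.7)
The plan is to unfold the representation (\ref{opis_lambda}) of $\lambda_{d_1,\dots,d_k}$, use the pointwise bound $|y_{r_1,\dots,r_k}| \leq F_{\max}$ coming from (\ref{opis_y2}), and reduce the remaining combinatorial sum to a product of one-dimensional sums of the shape $\sum 1/\varphi(s)$ that can be handled by standard multiplicative estimates.

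First I fix $(d_1,\dots,d_k)$; by the support conditions in (\ref{SelSieve}) it suffices to consider the case when each $d_i$ is squarefree and coprime to $W$ and the product $d = \prod d_i$ is squarefree. Starting from (\ref{opis_lambda}), the triangle inequality with $|y_{r_1,\dots,r_k}| \leq F_{\max}$ gives
\[
|\lambda_{d_1,\dots,d_k}| \leq F_{\max} \prod_{i=1}^{k} d_i \sum_{\substack{r_1,\dots,r_k \\ d_i \mid r_i,\ \mu(r_i)^2 = 1,\ (r_i,W)=1 \\ r/r_j < R\ \forall j}} \prod_{i=1}^{k} \frac{1}{\varphi(r_i)}.
\]
Since the support condition $r/r_j < R$ for each $j$ implies $r_i < R$ for every $i$ (by choosing $j \neq i$), I substitute $r_i = d_i s_i$ with $s_i$ squarefree and coprime to $d_i W$ and use $\varphi(r_i) = \varphi(d_i)\varphi(s_i)$ to decouple the sum:
\[
|\lambda_{d_1,\dots,d_k}| \leq F_{\max} \prod_{i=1}^{k} \frac{d_i}{\varphi(d_i)} \prod_{i=1}^{k} \sum_{\substack{s_i \leq R/d_i \\ \mu(s_i)^2 = 1,\ (s_i, d_i W) = 1}} \frac{1}{\varphi(s_i)}.
\]

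The crucial ingredient is the estimate
\[
\sum_{\substack{s \leq X \\ \mu(s)^2 = 1,\ (s,q) = 1}} \frac{1}{\varphi(s)} \ll \frac{\varphi(q)}{q} \log X,
\]
which I would derive either by an Euler-product/Dirichlet-series computation (the residue of $\sum_{(n,q)=1} \mu^2(n)\varphi(n)^{-1} n^{-s}$ at $s=0$ equals $\varphi(q)/q$) or by applying $\mu(s)^2/\varphi(s) = s^{-1} \sum_{a \mid s} \mu(a)^2/\varphi(a)$, swapping summation, and invoking the Mertens-type bound $\sum_{m \leq Y,\,(m,q)=1} 1/m \ll (\varphi(q)/q) \log Y$. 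Applied with $q = d_i W$ and $X = R/d_i$, the resulting factor $\varphi(d_i W)/(d_i W) \leq (\varphi(d_i)/d_i)(\varphi(W)/W)$ cancels the prefactor $d_i/\varphi(d_i)$, leaving
\[
|\lambda_{d_1,\dots,d_k}| \ll F_{\max} \Bigl(\frac{\varphi(W)}{W}\Bigr)^{k} \prod_{i=1}^{k} \log(R/d_i) \leq F_{\max} (\log R)^{k},
\]
as desired.

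The essential technical point is to preserve the $\varphi(q)/q$ factor in the inner estimate. A naive bound $\sum 1/\varphi(s) \ll \log X$ that ignores the coprimality condition would leave an uncancelled $\prod_i d_i/\varphi(d_i)$, which is not uniformly bounded across the full support of the sieve: although the $d_i$ are coprime to $W$, they can still have many prime factors up to $R$. Extracting the multiplicative cancellation from $1/\varphi$ is therefore the only nontrivial step; everything else is algebraic manipulation of the reciprocity relation (\ref{opis_lambda}).
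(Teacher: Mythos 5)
Your overall strategy --- unfold (\ref{opis_lambda}), substitute $r_i = d_i s_i$, exploit the multiplicativity of $\varphi$, and retain the coprimality constraint $(s_i,d_iW)=1$ so that the Euler-product savings cancel the $\prod_i d_i/\varphi(d_i)$ prefactor --- is the right one, and you correctly diagnose that the naive bound $\sum 1/\varphi(s_i)\ll\log R$ does not suffice because $\prod_i d_i/\varphi(d_i)$ is unbounded over the support of the sieve. So the structure of the argument is sound.

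The gap is in the ``crucial ingredient.'' The inequality
\[
\sum_{\substack{s \leq X \\ \mu(s)^2=1,\ (s,q)=1}}\frac{1}{\varphi(s)}\ll\frac{\varphi(q)}{q}\log X
\]
is \emph{not} true uniformly in $q$ and $X$, and neither is the Mertens-type bound $\sum_{m\leq Y,\,(m,q)=1}1/m\ll(\varphi(q)/q)\log Y$ that you propose to derive it from. The factor $\varphi(q)/q=\prod_{p\mid q}(1-1/p)$ records a saving over \emph{all} primes dividing $q$, whereas the sum on the left only ``sees'' the primes $\leq X$. If $q$ has many prime factors larger than $X$ the left-hand side is still at least $1$ (the term $s=1$), while the right-hand side can be made arbitrarily small: take $X=2$ and $q$ a primorial $\prod_{p\leq z}p$ with $z\to\infty$. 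This regime is not a degenerate choice in your application, since you take $X=R/d_i$ and $q=d_iW$; when $d_i$ is close to $R$, $X$ is close to $1$ while $q$ has prime factors up to $R$, which is exactly where the inequality breaks down. The residue computation you cite is correct as an asymptotic for each \emph{fixed} $q$, but the error term carries a $q$-dependent contribution of size roughly $\frac{\varphi(q)}{q}\sum_{p\mid q}\frac{\log p}{p-1}$, so it does not produce a uniform upper bound of the form you state.

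The repair is short and actually simpler than what you attempted. Instead of trying to keep the upper limit $X=R/d_i$, enlarge the sum to $s<R$ and bound by the full Euler product over $p<R$. Since each $d_i<R$ is squarefree and every prime of $W$ is $<D_0<R$, \emph{every} prime factor of $q=d_iW$ is below $R$, so the split of the product captures all of $\varphi(q)/q$:
\[
\sum_{\substack{s<R/d_i\\ \mu(s)^2=1,\ (s,d_iW)=1}}\frac{1}{\varphi(s)}
\leq\prod_{\substack{p<R\\ p\nmid d_iW}}\frac{p}{p-1}
=\prod_{p<R}\frac{p}{p-1}\cdot\frac{\varphi(d_iW)}{d_iW}
\ll\frac{\varphi(d_i)}{d_i}\cdot\frac{\varphi(W)}{W}\cdot\log R.
\]
This produces precisely the $\varphi(d_i)/d_i$ factor you need to cancel the prefactor $d_i/\varphi(d_i)$, and the rest of your computation then yields $|\lambda_{d_1,\dots,d_k}|\ll F_{\max}\bigl(\varphi(W)/W\bigr)^k(\log R)^k\leq F_{\max}(\log R)^k$ as desired. (You lose the sharper $\prod_i\log(R/d_i)$ refinement, but the lemma only asks for $(\log R)^k$.)
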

In \cite{MaynardK} the estimation of the two sums $T_\delta$ and $T^*_\delta $ makes up the crucial part of the proof. We propose their analogues for the multidimensional sieve. 

\begin{equation}
T_{\delta_1, \dots , \delta_k} = \sideset{}{'}\sum_{\substack{d_1, \dots , d_k \\ e_1, \dots , e_k  }} \frac{\lambda_{d_1, \dots , d_k} \lambda_{e_1, \dots , e_k}}{ \prod_{i=1}^k \frac{ [d_i , e_i, \delta_i] }{\delta_i}},
\end{equation}
\begin{equation}\label{2.2}
~~~~T^{(\ell)}_{\delta} = \sideset{}{'}\sum_{\substack{d_1, \dots , d_k \\ e_1, \dots , e_k \\ d_\ell , e_\ell|\delta }} \frac{\lambda_{d_1, \dots , d_k} \lambda_{e_1, \dots , e_k}}{ \prod_{\substack{i=1 \\ i \not= \ell}}^k \varphi \left( [d_i , e_i] \right) },
\end{equation}
where $\Sigma'$ means that $[d_1,e_1], \dots , [d_k,e_k]$ have to be pairwise coprime. 

In the next lemma we find a representation of $T_{\delta_1, \dots , \delta_k}$ in almost diagonal form. To this end, we combine the methods from the proofs of \cite[Lemma 5.1]{Maynard} and \cite[Lemma 6]{GYP}.

\begin{lemma}\label{Lemma2.2}
For $\delta_1, \dots , \delta_k \in \mathbf{N}$ pairwise coprime satisfying $\sum_{i=1}^k \omega (\delta_i) \ll 1$ we have
\begin{multline*} T_{\delta_1, \dots , \delta_k} =  \sum_{\substack{ u_1, \dots , u_k \\ \forall i~ (u_i , \delta_i)=1 }} \frac{1}{\prod_{i=1}^k \varphi( u_i ) }  \left( \sum_{\substack{ s_1, \dots , s_k \\ \forall i~ s_i | \delta_i }} \mu (s_1) \dots \mu (s_k) y_{u_1s_1, \dots , u_ks_k} \right)^2 \\ +~
O \left( \frac{y^2_{\max}\varphi (W)^k (\log R)^k }{ W^k D_0 } \right).
\end{multline*}
\end{lemma}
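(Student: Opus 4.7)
The plan is to adapt the techniques of \cite[Lemma 5.1]{Maynard} and \cite[Lemma 6]{GYP} to the multidimensional setting, where one has $k$ separate parameters $\delta_i$ rather than a single $\delta$. I begin by substituting the identity (\ref{opis_lambda}) for both $\lambda_{d_1,\dots,d_k}$ and $\lambda_{e_1,\dots,e_k}$ into the definition of $T_{\delta_1,\dots,\delta_k}$, and swapping the order of summation so that the $y$-variables appear on the outside:
\[ T_{\delta_1,\dots,\delta_k} = \sum_{r,t} \frac{y_{r_1,\dots,r_k}\, y_{t_1,\dots,t_k}}{\prod_{i=1}^k \varphi(r_i)\varphi(t_i)} \sideset{}{'}\sum_{\substack{d_i \mid r_i \\ e_i \mid t_i}} \prod_{i=1}^k \frac{\mu(d_i)\mu(e_i)\, d_i e_i\, \delta_i}{[d_i,e_i,\delta_i]}, \]
where the prime records that $[d_1,e_1],\dots,[d_k,e_k]$ must be pairwise coprime.

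The second step is to drop this inner coprimality constraint, absorbing the discrepancy into an error term. Once dropped, the inner sum factorises across $i$, and at each $i$ it further factorises into local contributions over primes. A direct local calculation at each prime gives two regimes: at a prime $p \nmid \delta_i$ the local factor vanishes unless $p \mid r_i \iff p \mid t_i$, in which case it equals $\varphi(p)=p-1$; at a prime $p \mid \delta_i$, writing $a = \mathbf{1}_{p \mid r_i}$ and $b = \mathbf{1}_{p \mid t_i}$, the local factor is $\mu(p^a)\varphi(p^a)\,\mu(p^b)\varphi(p^b)$. Decomposing $r_i = u_i s_i$ and $t_i = u'_i s'_i$ with $s_i, s'_i \mid \delta_i$ and $(u_i,\delta_i) = (u'_i,\delta_i) = 1$, the first regime forces $u_i = u'_i$ and contributes $\prod_i \varphi(u_i)$, while the second contributes $\prod_i \mu(s_i)\mu(s'_i)\varphi(s_i)\varphi(s'_i)$. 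Using $\varphi(u_i s_i) = \varphi(u_i)\varphi(s_i)$ to cancel the $\varphi(s_i),\varphi(s'_i)$ in the denominator, the main term collapses exactly to
\[ \sum_{\substack{u_1,\dots,u_k \\ (u_i,\delta_i)=1}} \frac{1}{\prod_{i=1}^k \varphi(u_i)} \left( \sum_{\substack{s_1,\dots,s_k \\ s_i \mid \delta_i}} \mu(s_1)\cdots\mu(s_k)\, y_{u_1 s_1,\dots,u_k s_k} \right)^2, \]
which matches the asserted diagonal form.

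The main obstacle is the justification of the stated error bound when dropping the coprimality constraint on the $[d_i,e_i]$. Because $y$ is supported on square-free $r=\prod_i r_i$ coprime to $W$, the components $r_i$ are automatically pairwise coprime and carry no prime factor below $D_0$; hence a failure of coprimality among the $[d_i,e_i]$ can occur only through a prime $p \geq D_0$ dividing both $r_i$ and $t_j$ for some $i \neq j$. An inclusion-exclusion bookkeeping over such collisions, combined with the trivial bound $|y_r y_t| \leq y_{\max}^2$, the hypothesis $\sum_i \omega(\delta_i) \ll 1$ (so that the sums over $\delta_i$-parts contribute only a bounded constant), and the standard moment estimate $\sum_{(n,W)=1,\, n \leq R} 1/\varphi(n) \ll (\varphi(W)/W)\log R$, yields the claimed error $O(y_{\max}^2 \varphi(W)^k (\log R)^k/(W^k D_0))$; the crucial $1/D_0$ gain comes precisely from the enforced lower bound $p \geq D_0$ on the collision prime.
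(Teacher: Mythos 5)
Your proposal takes a genuinely different route from the paper's proof, and the main-term computation you give is correct. The paper begins by setting $g_i(d) = d/(d,\delta_i)$, performs a M\"obius inversion $\bar g_i = g_i * \mu$ to introduce diagonal variables $u_i \mid d_i,e_i$, inserts indicator sums $\mathbf 1_{(d_i,e_j)=1} = \sum_{s_{ij}\mid d_i,e_j}\mu(s_{ij})$ to kill the cross-coprimality, and then changes variables to an intermediate quantity $w_{r_1,\dots,r_k}$ which it afterwards expresses through $y$. You instead substitute (\ref{opis_lambda}) directly for both $\lambda$'s, swap the summation order so the sum over $y$-indices $(r,t)$ is outermost, and perform a multiplicative local analysis of the inner $(d,e)$-sum; after the splitting $r_i = u_i s_i$ with $s_i \mid \delta_i$, $(u_i,\delta_i)=1$, this collapses cleanly to the asserted diagonal form. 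Your local-factor computation is right in both regimes (including when $\delta_i$ is not square-free, since $\max(a,b,\gamma)=\gamma$ for $\gamma \ge 1$, $a,b\le 1$), and the cancellation $\varphi(r_i)=\varphi(u_i)\varphi(s_i)$ is legitimate since $r_i$ is square-free. Your direct approach buys a considerably shorter derivation of the main term by eliminating the $w$-bookkeeping entirely, while the paper's intermediate $w$ makes the error analysis more modular.

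Where the proposal falls short is precisely the error analysis, which is the substantial part of the paper's argument. You correctly observe that the difference between the constrained and unconstrained inner sums vanishes for pairs $(r,t)$ with $(r_i,t_j)=1$ for all $i\neq j$, and that collision primes must satisfy $p \ge D_0$ because $r_i,t_j$ are coprime to $W$. Moreover --- and you could strengthen your sketch by noting this --- the unconstrained (factorized) inner sum actually \emph{vanishes} on bad pairs $(r,t)$ (the collision prime forces $u_i(r_i)\neq u_i(t_i)$ or the analogue for $\delta$-parts), so the error is exactly $\sum_{\text{bad}} \frac{y_r y_t}{\prod \varphi(r_i)\varphi(t_i)} B(r,t)$ where $B$ is the \emph{constrained} inner sum. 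But you do not actually bound $B(r,t)$ for bad pairs: $B$ does not factorize over $i$, and a trivial term-by-term bound on $\prod_i d_i e_i \delta_i/[d_i,e_i,\delta_i]$ is far too crude. One must re-introduce M\"obius indicator variables (the paper's $s_{ij}$'s) to restore multiplicativity before estimating, and then carry out the sums over $s_{ij}>D_0$ interacting with the $(s_{ij},\delta_l)$-factors; this is exactly what takes the paper from (\ref{total_error_lemma2}) through (\ref{2.23}). Your phrase ``inclusion-exclusion bookkeeping'' gestures at this but does not supply it, and since the interaction between the $s_{ij}$ and the $\delta_l$ is where the hypothesis $\sum_i\omega(\delta_i)\ll 1$ is actually spent, the omission is a genuine gap in an otherwise sound and attractively direct argument.
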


\begin{proof} Since the $s_i$ which are not all coprime to $W$ give no contribution to the inner sum,
both sides of the asserted equality remain constant if we divide one of the $\delta_i$ by a prime divisor of $(\delta_i, W)$. 
Hence, we can assume that $(\prod_{i=1}^k \delta_i , W)=1$. For each $i=1, \dots ,k$ put
\[ g_i(d) = \frac{d}{(d,\delta_i)}.\]
With this notation, we get
\begin{equation}
 T_{\delta_1, \dots , \delta_k} = \sideset{}{'}\sum_{\substack{d_1, \dots , d_k \\ e_1, \dots , e_k  }} \frac{\lambda_{d_1, \dots , d_k} \lambda_{e_1, \dots , e_k }  \prod_{i=1}^k g_i \left((d_i , e_i )\right) } {{ \left( \prod_{i=1}^k g_i(d_i) \right) \left(\prod_{i=1}^k g_i(e_i) \right) }}.
\end{equation}
Consider the Dirichlet convolutions
\[ \overline{g}_i = g_i * \mu . \]
Note that
\[ \overline{g}_i (p) = 
        \begin{cases}
        \varphi(p) & \text{for } p \nmid  \delta_i, \\
        0 & \text{if } p| \delta_i.
        \end{cases}
         \] 
By M\"{o}bius inversion, we have
\begin{align}\label{2.4}
 T_{\delta_1, \dots , \delta_k} &= \sideset{}{'}\sum_{\substack{d_1, \dots , d_k \\ e_1, \dots , e_k  }} \frac{\lambda_{d_1, \dots , d_k} \lambda_{e_1, \dots , e_k }  } {{ \left( \prod_{i=1}^k g_i(d_i) \right) \left(\prod_{i=1}^k g_i(e_i) \right) }}
 \sum_{\substack{ u_1, \dots , u_k \\ \forall i~ u_i | d_i , e_i}} \prod_{i=1}^k \overline{g}_i (u_i)  \\
&=  \sum_{ \substack{ u_1, \dots , u_k \\ \forall i ~ (u_i, \delta_i)=1 }} \left( \prod_{i=1}^k \varphi (u_i) \right)
\sideset{}{'}\sum_{\substack{d_1, \dots , d_k \\ e_1, \dots , e_k \\ \forall i ~ u_i | d_i , e_i }} \frac{\lambda_{d_1, \dots , d_k} \lambda_{e_1, \dots , e_k }  } {{ \left( \prod_{i=1}^k g_i(d_i) \right) \left(\prod_{i=1}^k g_i(e_i) \right) }}. \nonumber
 \end{align}
 We wish to drop the dependencies between the $d_i$ and the $e_j$ variables in the inner sum. The requirement that $[d_1,e_1], \dots , [d_k,e_k]$ are pairwise coprime is equivalent to $(d_i, e_j)=1$ for all $i \not= j$. This is because $\lambda_{d_1, \dots , d_k}$ is supported only on integers $d_1, \dots , d_k$ satisfying $(d_i, d_j) = 1$ for all $i \not= j$. We can also remove the restriction $(d_i, e_j)=1$ for all $i \not= j$ by noticing that 
\[  \mathbf{1}_{(d_i, e_j)=1} = \sum_{s_{i,j}|d_i, e_j } \mu (s_{i,j}).\]
Hence $ T_{\delta_1, \dots , \delta_k}$ equals
\begin{equation}\label{2.5}
  \sum_{ \substack{ u_1, \dots , u_k \\ \forall i ~ (u_i, \delta_i)=1 }} \left(  \prod_{i=1}^k \varphi (u_i) \right)
\sum_{\substack{ s_{1,2} , \dots s_{k,k-1}}} \left(  \prod_{\substack{ 1 \leq i , j \leq k \\ i \not= j }}\mu (s_{i,j}) \right)
  \sum_{\substack{d_1, \dots , d_k \\ e_1, \dots , e_k \\ \forall i ~ u_i | d_i , e_i \\ \forall i\not=j ~ s_{i,j}|d_i, e_j}} \frac{\lambda_{d_1, \dots , d_k} \lambda_{e_1, \dots , e_k }  } {{ \left( \prod_{i=1}^k g_i (d_i) \right) \left(\prod_{i=1}^k g_i (e_i) \right) }}
 \end{equation}
Note that $\lambda_{d_1, \dots , d_k}=0$, unless $(d_i,d_j)=1$ for all $i\not=j$, so the $s_{i,j}$ that are not coprime to $u_i$ or $u_j$ give no contribution to the sum. We can also impose further constraint that $s_{i_1, j_1}$ has to be coprime to $s_{i_2,j_2}$, if these two share any coordinate. We denote this condition by writing $*$ next to the sum.

We define
\begin{equation}\label{2.6}
 w_{r_1, \dots , r_k} = \left( \prod_{i=1}^k \mu (r_i) \varphi  (r_i) \right) \sum_{\substack{d_1, \dots , d_k \\ \forall i~ r_i|d_i}}  \frac{\lambda_{d_1, \dots , d_k}   } {{ \prod_{i=1}^k g_i (d_i) }}.
\end{equation}
 By Lemma \ref{RECIP} and the definition of $w_{r_1, \dots , r_k}$ we can change the variables as follows:
\begin{equation}\label{2.7}
\lambda_{d_1, \dots , d_k} = \left( \prod_{i=1}^k \mu(d_i) g_i(d_i) \right) 
 \sum_{\substack{r_1 , \dots r_k \\ \forall i ~ d_i | r_i }} \frac{w_{r_1, \dots , r_k}}{ \prod_{i=1}^k \varphi (r_i) }.
\end{equation}

From (\ref{2.7}), we obtain
\begin{equation}\label{2.8}
 \sum_{\substack{d_1, \dots , d_k \\ \forall i ~ u_i | d_i  \\ \forall i\not=j ~ s_{i,j}|d_i}} \frac{\lambda_{d_1, \dots , d_k}  } {{ \prod_{i=1}^k g_i (d_i)  }} =  
 \sum_{\substack{r_1 , \dots r_k}}  \frac{w_{r_1, \dots , r_k}}{ \prod_{i=1}^k \varphi (r_i) }
  \sum_{\substack{d_1, \dots , d_k \\ \forall i ~ u_i | d_i  \\ \forall i\not=j ~ s_{i,j}|d_i \\ \forall i ~ d_i | r_i }}   \left( \prod_{i=1}^k \mu(d_i)  \right)  = w_{a_1, \dots, a_k}  \prod_{i=1}^k \frac{ \mu (a_i) } { \varphi (a_i)},
\end{equation}
where $a_j = u_j \prod_{i \not= j} s_{j,i}$. By performing an analogous calculation, we get 
\begin{equation}\label{2.9} 
\sum_{\substack{e_1, \dots , e_k \\ \forall i ~ u_i | e_i  \\ \forall i\not=j ~ s_{j,i}|e_i}} \frac{\lambda_{e_1, \dots , e_k}  } {{ \prod_{i=1}^k g_i (e_i)  }} =   w_{b_1, \dots, b_k} \prod_{i=1}^k \frac{ \mu (b_i) }{ \varphi (b_i)},
  \end{equation}
where $b_j =  u_j \prod_{i \not= j} s_{i,j}$. We observe that when $a_j$, respectively $b_j$, are not all square-free, the right-hand side of (\ref{2.8}), respectively (\ref{2.9}), vanishes, so we can rewrite $\mu (a_i)$ as $\mu (u_j) \prod_{i\not=j} \mu (s_{j,i})$ and do the same thing with $\mu (b_j)$, $\varphi (a_j)$ and $\varphi (b_j)$. Combining this fact with (\ref{2.5}), (\ref{2.8}), and (\ref{2.9}), we find
 \begin{equation}\label{2.10}
 T_{\delta_1, \dots , \delta_k} = 
 \sum_{ \substack{ u_1, \dots , u_k \\ \forall i ~ (u_i, \delta_i)=1 }} \left(  \prod_{i=1}^k \frac{\mu (u_i)^2}{\varphi (u_i)} \right)
 \sideset{}{^{*}}\sum_{\substack{ s_{1,2} , \dots s_{k,k-1}}} \left(  \prod_{\substack{ 1 \leq i , j \leq k \\ i \not= j }} \frac{\mu (s_{i,j})}{ \varphi (s_{i,j})^2 } \right) w_{a_1 , \dots , a_k} w_{b_1, \dots , b_k}.
 \end{equation}
 Again, note that the summands of the inner sum vanish when the $a_j$ and $b_j$ are not all square-free, by the definition of $w_{r_1, \dots, r_k}$.
 
The relation $s_{i,j} | d_i$ for all $i\not=j$ implies that there is no contribution from $s_{i,j}$ satisfying $(s_{i,j},W)\not=1$. Therefore, we can restrict our considerations to the $s_{i,j}$ such that $s_{i,j}=1$ for all $i \not= j$ or $s_{l,m}>D_0$ for some pair $l \not= m$. 

Consider the case when $s_{i,j}=1$ for all $i\not=j$. We have to calculate the expression
 \begin{equation}\label{formula_T}
\sum_{\substack{ u_1, \dots , u_k \\ \forall i ~ (u_i,\delta_i)=1 }} \frac{w_{u_1,\dots,u_k}^2 }{\prod_{i=1}^k \varphi (u_i) }.
 \end{equation}
  We have
\begin{align}\label{2.12}
w_{r_1, \dots , r_k} &=  \left( \prod_{i=1}^k  \frac{ \mu (r_i)  \varphi (r_i)  }{  g_i (r_i)  } \right)
\sum_{d_1, \dots , d_k} \frac{ \lambda_{d_1r_1, \dots , d_kr_k}}{ \prod_{i=1}^k g_i (d_i) } \nonumber \\
&= \left(  \prod_{i=1}^k   \frac{ \mu (r_i)  \varphi (r_i)  }{   g_i (r_i) } \right)  \sum_{d_1, \dots , d_k} 
\left(  \prod_{i=1}^{k}  \frac{\mu (d_i r_i) d_ir_i}{g_i (d_i) } \right)  \sum_{t_1, \dots , t_k} \frac{ y_{t_1d_1r_1 , \dots , t_kd_kr_k} }{\prod_{i=1}^k \varphi (t_id_ir_i) }  \\
&=  \left( \prod_{i=1}^k \frac{   \mu  (r_i)^2  \varphi (r_i) r_i  }{  g_i (r_i) }  \right) 
 \sum_{m_1, \dots , m_k}  \frac{ y_{m_1r_1 , \dots , m_kr_k} }{ \left(  \prod_{i=1}^k \varphi (m_ir_i) \right) } \sum_{\substack{ d_1, \dots , d_k \\ \forall i ~d_i|m_i }} 
 \prod_{i=1}^{k} \frac{  \mu (d_i) d_i}{  g_i (d_i) } . \nonumber
\end{align}
For $i=1, \dots , k$ and square-free $m$ we consider the sum 
\[ \sum_{d|m} \frac{ \mu (d) d }{ g_i (d)} = \sum_{d|m} \mu (d) (d, \delta_i ). \]
We split $d=d'd''$, where $(d',\delta_i)=1$ and $d''|\delta_i$. Then the sum above equals
\begin{equation}\label{2.13}
\sum_{\substack{ d'|m \\ (d',\delta_i)=1 }} \mu( d' ) \sum_{\substack{d''|m \\ d''|\delta_i }} \mu(d'') (d'', \delta_i) 
\end{equation}
The first factor of (\ref{2.13}) equals
\begin{equation}
\sum_{\substack{ d'|m \\ (d',\delta_i)=1 }} \mu( d' ) = \prod_p \left( 1 - \mathbf{1}_{p \nmid \delta_i } \right) = \mathbf{1}_{m|\delta_i}.
\end{equation}
The second factor of (\ref{2.13}) equals
\begin{equation}
 \sum_{\substack{d''|m \\ d''|\delta_i }} \mu(d'') (d'', \delta_i)  = \prod_{p|(m,\delta_i)} \left( 1 - p \right) = \mu \left( (m, \delta_i) \right) \varphi  \left( (m, \delta_i) \right).
\end{equation}
Therefore, we find that
\begin{equation}\label{formula_w}
w_{r_1, \dots , r_k} = 
\left( \prod_{i=1}^k  (r_i, \delta_i) \right)
 \sum_{\substack{ m_1, \dots , m_k \\ \forall i ~ m_i|\delta_i }}  y_{m_1r_1 , \dots , m_kr_k}
\prod_{i=1}^k  \mu \left( m_i \right) .
\end{equation}

Recall that the number of prime factors of $\delta_1 \cdots \delta_k$ is bounded. In the case when $s_{l,m} > D_0$ for some pair $l \not=m$ the contribution of (\ref{2.10})  is
\begin{multline}\label{total_error_lemma2}
\ll (y_{\max })^2 \left( \prod_{i=1}^k \sum_{\substack{ u<R \\ (u,\delta_i W)=1}} \frac{\mu (u)^2}{\varphi (u)} \right)
\left( \sum_{ \substack{ s_{l,m} > D_0 \\ (s_{l,m},W)=1}}  \frac{\mu (s_{l,m} )^2 (s_{l,m} , \delta_l) (s_{l,m}, \delta_m) }{\varphi (s_{l,m} )^2 }   \right) \\
\times ~ \left( \prod_{\substack{ i,j=1 \\ i \not=j \\ (i,j) \not= (l,m) }}^k
 \sum_{ s=1 }^\infty  \frac{\mu (s )^2 (s , \delta_i) (s, \delta_j)}{\varphi (s )^2 }   \right) ,
\end{multline}
where we used identity (\ref{formula_w}) and $(a_i, \delta_i) = \prod_{i \not= j} ( s_{j,i}, \delta_i)$ and an analogous one for the $b_i$. By factoring $s=s's''$, where $(s',\delta_i)=1$ and $s''|\delta_i$, we find that
\begin{equation}\label{2.18}
 \sum_{ s=1 }^\infty  \frac{\mu (s )^2 (s , \delta_i) (s, \delta_j)}{\varphi (s )^2 }    = 
 \sum_{s''|\delta_i} \frac{s'' \mu (s'' )^2 (s'', \delta_j) }{\varphi (s'' )^2 }   
  \sum_{ \substack{ s'=1 \\ (s',\delta_i)=1 } }^\infty  \frac{\mu (s' )^2  (s', \delta_j)}{\varphi (s' )^2 }.
\end{equation}
A trivial estimation gives
\begin{equation}\label{2.19}
 \sum_{s''|\delta_i}  \frac{s'' \mu (s'' )^2 (s'', \delta_j) }{\varphi (s'' )^2 }    \leq \prod_{p|\delta_i}  \left( 1 + \frac{p^2}{(p-1)^2} \right) \ll 1.
\end{equation}
We repeat this procedure for $(s', \delta_j)$ in order to get
\begin{equation}\label{2.20}
 \sum_{ s=1 }^\infty  \frac{\mu (s )^2 (s , \delta_i) (s, \delta_j)}{\varphi (s )^2 } \ll 1.
\end{equation}
By the same method, we also have
\begin{multline}\label{2.21}
 \sum_{ \substack{ s > D_0 \\ (s ,W)=1}}  \frac{\mu (s )^2 (s , \delta_l) (s , \delta_m) }{\varphi (s )^2 }  =
  \sum_{ s'' | \delta_l}  \frac{\mu (s'' )^2 s'' (s'' , \delta_m) }{\varphi (s'' )^2 } 
   \sum_{ \substack{ s' \\ s's'' > D_0 \\ (s' ,\delta_l W)=1}}  \frac{\mu (s' )^2 (s' , \delta_m) }{\varphi (s' )^2 }  \\
\ll   \sum_{ \substack{ s \\ s > D_0 \\ (s ,\delta_l W)=1}}  \frac{\mu (s )^2 (s , \delta_m) }{\varphi (s )^2 } 
+ \frac{1}{D_0} \sum_{s=1}^\infty  \frac{\mu (s )^2 (s , \delta_m) }{\varphi (s )^2},
\end{multline}
where we used the fact that
\begin{equation}\label{2.22}
 \sum_{ \substack{ s | \delta_l \\ s \not=1 }}  \frac{\mu (s )^2 s }{\varphi (s )^2 } = \left( \prod_{p| \delta_l} \left( 1 + \frac{p}{(p-1)^2} \right) \right) -1 \ll \left( 1 + O \left( \frac{1}{D_0} \right) \right)^{\omega ( \delta_l )} -1 \ll \frac{1}{D_0}.
\end{equation}
We repeat the splitting procedure for $s'$ and combining the result with (\ref{2.21}) we obtain
\begin{equation}\label{2.23}
 \sum_{ \substack{ s_{l,m} > D_0 \\ (s_{l,m},W)=1}}  \frac{\mu (s_{l,m} )^2 (s_{l,m} , \delta_l) (s_{l,m}, \delta_m) }{\varphi (s_{l,m} )^2 }  \ll \frac{1}{D_0}.
\end{equation}

Combining (\ref{2.18})--(\ref{2.23}) we get that the total error term coming from (\ref{total_error_lemma2}) is 
\begin{equation}
\ll 
O \left( \frac{y^2_{\max}\varphi (W)^k (\log R)^k }{ W^k D_0 } \right).
\qedhere
\end{equation}
\end{proof}

We define a function which is going to be useful in the next lemma.
 \begin{equation}\label{opis_w2}
  w^{(\ell )}_{r_1, \dots , r_k} = \left( \prod_{i=1}^k  \mu (r_i) \right) \left( \prod_{\substack{i=1 \\ i \not= \ell}}^k \psi (r_i) \right) \sum_{\substack{ d_1, \dots , d_k \\\forall i~ r_i|d_i \\ d_\ell | \delta }}
  \frac{\lambda_{d_1, \dots , d_k} }{ \prod_{i \not= \ell} \varphi \left( d_i\right)  
 } ,
 \end{equation}
where $\psi$ is the totally multiplicative function definded by $\psi (p)= p-2$ for all primes. Put $ w^{(\ell )}_{\max }  = \sup_{r_1, \dots , r_k} | w_{r_1,\dots,r_k}^{(\ell )}| $. The function $w^{(\ell )}_{r_1, \dots , r_k}$ can be seen as a generalization of  $y^{(\ell )}_{r_1, \dots , r_k}$ from \cite{Maynard}.

We now state a lemma, similar to the previous one, referring to $T_{\delta}^{(\ell)}$ (cf. (\ref{2.2})).

\begin{lemma}\label{Lemma2.3}
For $\delta \in \mathbf{N}$ satisfying $\omega ( \delta ) \ll 1$ we have
\[ T_{\delta}^{(\ell)} =  \sum_{ \substack{ u_1, \dots , u_k }} \frac{ (w^{(\ell )}_{u_1, \dots , u_k})^2 }{ \prod_{\substack{ i\not= \ell}}^k \psi (u_i) } + O \left( \frac{ ( w^{(\ell )}_{\max } )^2\varphi (W)^{k-1} \log^{k-1} R }{ W^{k-1} D_0 } \right)\]
\end{lemma}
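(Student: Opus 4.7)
The plan is to imitate the proof of Lemma~\ref{Lemma2.2}, with the role of the multiplicative function $g_i$ (which there absorbed the factor $[d_i,e_i,\delta_i]/\delta_i$) now played by the Dirichlet-convolution identity $\varphi = 1 * \psi$, where $\psi(p)=p-2$. Since $\lambda_d$ is supported on squarefree tuples whose coordinates are pairwise coprime and coprime to $W$, for each $i \neq \ell$ we have $\varphi([d_i,e_i]) = \varphi(d_i)\varphi(e_i)/\varphi((d_i,e_i))$ and
\[ \varphi((d_i,e_i)) = \sum_{u_i \mid d_i,\, u_i \mid e_i} \psi(u_i). \]
Inserting these identities into the definition~(\ref{2.2}) of $T_\delta^{(\ell)}$ and interchanging the orders of summation brings each $\psi(u_i)$ (for $i \neq \ell$) outside and decouples the $d$- and $e$-sums up to the shared divisibility by $u_i$.

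Second, I would remove the pairwise coprimality constraint $\Sigma'$ exactly as in Lemma~\ref{Lemma2.2}, by inserting $\mathbf{1}_{(d_i,e_j)=1} = \sum_{s_{i,j} \mid d_i,e_j}\mu(s_{i,j})$ for each $i \neq j$. Setting $a_j = u_j \prod_{i\neq j} s_{j,i}$ for $j \neq \ell$ and $a_\ell = \prod_{i\neq \ell} s_{\ell,i}$, with analogous $b_j$'s for the $e$-sum, the inner $d$-sum becomes
\[ \sum_{\substack{d_1,\dots,d_k \\ a_i \mid d_i\ \forall i \\ d_\ell \mid \delta}} \frac{\lambda_{d_1,\dots,d_k}}{\prod_{i\neq\ell}\varphi(d_i)}, \]
which, by the very definition~(\ref{opis_w2}) of $w^{(\ell)}$, equals $w^{(\ell)}_{a_1,\dots,a_k}$ divided by $\prod_{i=1}^{k}\mu(a_i) \prod_{i\neq\ell}\psi(a_i)$, and vanishes unless every $a_i$ is squarefree.

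The main term of $T_\delta^{(\ell)}$ corresponds to all $s_{i,j}=1$, giving $a_j=b_j=u_j$ for $j\neq\ell$ and $a_\ell=b_\ell=1$. After squaring and cancelling the $\mu(u_i)^2=1$ and $\psi(u_i)^2$ factors against the outer $\psi(u_i)$ weights, one obtains precisely the diagonal expression $\sum_{u_1,\dots,u_k}(w^{(\ell)}_{u_1,\dots,u_k})^2/\prod_{i\neq\ell}\psi(u_i)$ asserted in the lemma, with $u_\ell$ contributing only through the support conventions implicit in $w^{(\ell)}$. The error arises from configurations with $s_{l,m}>D_0$ for some $l \neq m$: factoring $s_{l,m}=s's''$ with $(s',\delta)=1$ and $s''\mid\delta$ and invoking the bounds~(\ref{2.20})--(\ref{2.23}) as in Lemma~\ref{Lemma2.2}, one gains a factor $1/D_0$ from the $s_{l,m}>D_0$ constraint while the remaining $s$-series are $O(1)$. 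The $k-1$ free $u_i$-sums for $i \neq \ell$, compared to full Euler products via $\varphi(W)/W$, produce $(\varphi(W)\log R/W)^{k-1}$; the exponent $k-1$ rather than $k$ faithfully reflects the absence of a $\varphi(d_\ell)$ factor in the denominator of $T_\delta^{(\ell)}$.

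I expect the principal bookkeeping difficulty to be the asymmetric role of the index $\ell$. Unlike in Lemma~\ref{Lemma2.2}, position $\ell$ contributes neither a $\varphi(d_\ell)$ factor nor a coprimality constraint of the form $(u_\ell,\delta)=1$ arising from a $\overline{g}_\ell$-type function; instead, the restriction $d_\ell,e_\ell \mid \delta$ is inherited directly into the definition of $w^{(\ell)}$. One must verify carefully that the $\mu$-inversion on the remaining $k-1$ coordinates still closes up cleanly to produce the asserted formula, and that the multiplicative-series bounds on contributions with $s_{\ell,\cdot}>D_0$ or $s_{\cdot,\ell}>D_0$ retain the $1/D_0$ saving despite the missing $\varphi$-factor at position $\ell$, using the fact that $\psi(p)/\varphi(p) = 1 + O(1/p)$ and that $\omega(\delta)\ll 1$ keeps the relevant divisor-type contributions of $\delta$ bounded.
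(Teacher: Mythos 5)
Your proposal follows essentially the same route as the paper: decouple the $d$- and $e$-variables via the squarefree identity $1/\varphi([d_i,e_i]) = \varphi((d_i,e_i))/(\varphi(d_i)\varphi(e_i))$ with $\varphi = 1*\psi$, remove the pairwise-coprimality constraint by Möbius inversion, apply the reciprocity lemma to rewrite the inner $d$-sum as $w^{(\ell)}_{a_1,\dots,a_k}\prod_i\mu(a_i)/\prod_{i\neq\ell}\psi(a_i)$, and then isolate the diagonal $s_{i,j}=1$ contribution as the main term. Your concern about the asymmetric role of $\ell$ is well placed: the paper handles it by setting $u_\ell = 1$ and treating the $s_{i,j}$ with $i=\ell$ or $j=\ell$ separately, observing that such $s_{i,j}$ must divide $\delta$ and hence have all prime factors greater than $D_0$, which yields the $1/D_0$ saving directly. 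The one minor imprecision is your appeal to the bounds (2.20)--(2.23); those are tailored to the $(\cdot,\delta_i)(\cdot,\delta_j)$ factors appearing in $T_{\delta_1,\dots,\delta_k}$ and involve the $s=s's''$ factoring precisely to strip those factors off. Here the series are over $\mu(s)^2/\psi(s)^2$ with no $\delta$-dependence, so the $1/D_0$ saving comes immediately from $\sum_{s>D_0}\mu(s)^2/\psi(s)^2 \ll 1/D_0$ (and, for $i$ or $j$ equal to $\ell$, from $s_{i,j}\mid\delta$, $s_{i,j}>D_0$, $\omega(\delta)\ll 1$), without any need for the splitting trick.
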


\begin{proof}
We proceed very much like in the proof of the previous lemma. Again, we wish to drop the dependencies between the $d_i$ and the $e_j$ variables. This time we proceed by using the following identity valid for square-free numbers:
\begin{equation}
\frac{1}{\varphi ([d_i, e_i])} = \frac{1}{\varphi (d_i) \varphi (e_i)} \sum_{u_i | d_i, e_i} \psi (u_i).
\end{equation}
Therefore,

\begin{equation}
 T_{\delta}^{(\ell )} =
 \sum_{ \substack{ u_1, \dots u_{\ell -1}, u_{\ell +1}, \dots , u_k }} \left( \prod_{\substack{i=1 \\ i\not= \ell}}^k \psi (u_i) \right)
 \sideset{}{'}\sum_{\substack{d_1, \dots , d_k \\ e_1, \dots , e_k \\ d_\ell , e_\ell|\delta \\ \forall i\not=\ell ~ u_i|d_i,e_i }} \frac{\lambda_{d_1, \dots , d_k} \lambda_{e_1, \dots , e_k}}{ \left( \prod_{i \not= \ell} \varphi \left( d_i\right)  
  \right) \left( \prod_{i \not= \ell}  \varphi \left( e_i\right) \right) } 
 \end{equation}
In order to slightly simplify our notation we will introduce an extra variable $u_\ell$ under the exterior sum and we make it equal 1. We rewrite the conditions $(d_i, e_j)=1$ by multiplying the expression by $
\sum_{\substack{ s_{i,j}|d_i,e_j}} \mu (s_{i,j})  $ for each $i\not= j$. We also notice that if $i$ or $j$ equals $\ell$, then $s_{i,j}|\delta$. We again impose the same conditions on $s_{i,j}$ as in Lemma \ref{Lemma2.2}. Hence,
\begin{equation}\label{cos_tam}
T_{\delta}^{(\ell )} =
 \sum_{ \substack{ u_1, \dots , u_k \\ u_\ell =1}} \left( \prod_{\substack{i=1}}^k \psi (u_i)\right)
\sideset{}{^{*}}\sum_{\substack{ s_{1,2} , \dots s_{k,k-1}}} \left(  \prod_{\substack{ 1 \leq i , j \leq k \\ i \not= j }}\mu (s_{i,j}) \right)
   \sum_{\substack{d_1, \dots , d_k \\ e_1, \dots , e_k \\ d_\ell , e_\ell|\delta \\ \forall i ~ u_i|d_i,e_i \\ \forall i\not= j ~ s_{i,j}| d_i, e_j}} \frac{\lambda_{d_1, \dots , d_k} \lambda_{e_1, \dots , e_k}}{ \left( \prod_{i \not= \ell} \varphi \left( d_i\right)  
  \right) \left( \prod_{i \not= \ell}  \varphi \left( e_i\right) \right) }  
 \end{equation}
Transforming (\ref{opis_w2}) a bit, we obtain
 \begin{equation}\label{2.30}
  \sum_{\substack{d_1, \dots , d_k \\d_\ell |\delta \\ \forall i ~ u_i|d_i \\ \forall i\not= j ~ s_{i,j}| d_i }} \frac{\lambda_{d_1, \dots , d_k} }{  \prod_{i \not= \ell} \varphi \left( d_i\right)}  = \frac{w_{a_1, \dots , a_k}^{( \ell )} }{ \prod_{i \not= \ell } \psi (a_i) } \prod_{i=1}^k \mu (a_i)
 \end{equation}
and a similar equation for the $b_i$ with $a_i$ and $b_i$ defined as in Lemma \ref{Lemma2.2}. Substituing this into (\ref{cos_tam}), we obtain
 \begin{equation}\label{2.31}
 T_{\delta}^{(\ell )} =
 \sum_{ \substack{ u_1, \dots , u_k \\ u_\ell = 1 }} \left( \prod_{\substack{i=1 }}^k \frac{\mu (u_i)^2 }{\psi (u_i) }\right) 
\sideset{}{^{*}}\sum_{\substack{ s_{1,2} , \dots s_{k,k-1}}} \left(  \prod_{\substack{ 1 \leq i , j \leq k \\ i \not= j \\i,j\not=\ell }} \frac{\mu (s_{i,j})}{\psi (s_{i,j})^2 } \right) \left( 
 \prod_{\substack{ 1 \leq i , j \leq k \\ i \not= j \\ i=\ell \text{ or } j=\ell}} \frac{\mu (s_{i,j})}{\psi (s_{i,j}) }
  \right)w^{(\ell )}_{a_1, \dots , a_k} w^{(\ell )}_{b_1, \dots , b_k},
 \end{equation}

 We can see that the contribution coming from $s_{i,j} \not= 1$ for $i,j\not=\ell$ is
 \begin{multline}\label{2.32}
 \ll   ( w^{(\ell )}_{\max } )^2   \left( \sum_{ \substack{ u <R \\ (u,W)=1 }}  \frac{\mu  (u)^2 }{\psi (u) }\right)^{k-1}
 \left( \sum_{ s_{i,j} > D_0 }  \frac{\mu (s_{i,j} )^2}{\psi (s_{i,j} )^2 }   \right)
 \left( \sum_{ s=1 }^\infty  \frac{\mu (s )^2}{\psi (s )^2 }   \right)^{k^2-3k+1} \left( \sum_{s|\delta} \frac{\mu (s)^2}{\psi (s)} \right)^{2k-2} \\
\ll  \frac{ ( w^{(\ell )}_{\max } )^2 \varphi (W)^{k-1} \log^{k-1} R }{ W^{k-1} D_0 }.
 \end{multline}
 On the other hand, if $i$ or $j$ equals $\ell$ then the contribution from $s_{i,j}$ is
 \begin{multline}\label{2.33}
  \ll   ( w^{(\ell )}_{\max } )^2   \left( \sum_{ \substack{ u <R \\ (u,W)=1 }}  \frac{\mu (u)^2 }{\psi (u) }\right)^{k-1}
 \left( \sum_{\substack{ s_{i,j}|\delta \\ s_{i,j} > D_0 }}  \frac{\mu (s_{i,j} )^2}{\psi (s_{i,j} ) }   \right)
 \left( \sum_{ s=1 }^\infty  \frac{\mu (s )^2}{\psi (s )^2 }   \right)^{k^2-3k+2} \left( \sum_{s|\delta} \frac{\mu (s)^2}{\psi (s)} \right)^{2k-3} \\
\ll \frac{ ( w^{(\ell )}_{\max } )^2 \varphi (W)^{k-1} \log^{k-1} R }{ W^{k-1} D_0 }. 
 \end{multline} 
 
 Thus, the main term equals
 \begin{equation}\label{2.34}
 \sum_{ \substack{ u_1, \dots , u_k \\ u_\ell =1 }} \frac{ (w^{(\ell )}_{u_1, \dots , u_k})^2 }{ \prod_{\substack{ i\not= \ell}}^k \psi (u_i) } . 
 \qedhere
 \end{equation}
 \end{proof}
 
 By (\ref{2.34}) we need to calculate $w^{(\ell )}_{r_1, \dots , r_k}$ in terms of $y_{r_1, \dots , r_k}$ only when  the $\ell$-th coordinate of $w^{(\ell )}_{r_1, \dots , r_k}$ equals $1$. However, we can just as easily deal with a slightly more general situation.

 \begin{lemma}\label{Lemma2.4}
  For $r_1 , \dots, r_k \in \mathbf{N}$ and $\delta$ satisfying $\omega ( \delta  ) \ll 1$, we have
 \begin{multline*}
 w^{(\ell )}_{r_1, \dots , r_k} = \mu ( r_\ell )
\left(   \sum_{\substack{ r_\ell | s  | \delta }}  \mu (s)  \sum_{\substack{ a \\ (a,\delta)=1 }} 
   \frac{y_{r_1, \dots , r_{\ell -1} , as , r_{\ell +1} , \dots , r_k}}{ \varphi (a) } \right) \left(1 + O \left( \frac{1}{D_0} \right) \right) \\ + ~   O \left( \frac{y_{\max } \varphi(W)  \log R }{WD_0} \right).
\end{multline*}
 \end{lemma}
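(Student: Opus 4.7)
\emph{Proof idea.} The plan is to substitute the reciprocal expansion (\ref{opis_lambda}) of $\lambda_{d_1,\dots,d_k}$ in terms of $y_{t_1,\dots,t_k}$ into the definition (\ref{opis_w2}) of $w^{(\ell)}_{r_1,\dots,r_k}$, interchange the order of summation so that the sum over the $y$-indices $t_i$ is outermost, and evaluate the innermost sums over $d_i$. These sums factorise over $i$. For $i\neq\ell$, writing $d_i=r_ie_i$ and applying the multiplicative identity $\sum_{e\mid m}\mu(e)e/\varphi(e)=\mu(m)/\varphi(m)$ (for squarefree $m$) gives
\[ \sum_{r_i\mid d_i\mid t_i}\frac{\mu(d_i)d_i}{\varphi(d_i)} = \frac{\mu(r_i)r_i\mu(t_i/r_i)}{\varphi(r_i)\varphi(t_i/r_i)}, \]
while for $i=\ell$ the inner sum is $G_\ell(t_\ell):=\sum_{r_\ell\mid s\mid(t_\ell,\delta)}\mu(s)s$. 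Reparametrising $t_i=r_ia_i$ for $i\neq\ell$ and collecting prefactors yields the exact identity
\[ w^{(\ell)}_{r_1,\dots,r_k} = \mu(r_\ell)\prod_{i\neq\ell}\frac{r_i\psi(r_i)}{\varphi(r_i)^2}\sum_{r_\ell\mid t_\ell}\frac{G_\ell(t_\ell)}{\varphi(t_\ell)}\sum_{\vec a}\prod_{i\neq\ell}\frac{\mu(a_i)}{\varphi(a_i)^2}\,y_{r_1a_1,\dots,t_\ell,\dots,r_ka_k}. \]

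Next I split the $\vec a$-sum into the diagonal contribution ($a_i=1$ for all $i\neq\ell$) and the remainder. Since $(a_i,W)=1$ forces every prime divisor of each $a_i$ to exceed $D_0$, the estimate $\sum_{a\geq 1,(a,W)=1}\mu(a)^2/\varphi(a)^2=\prod_{p\geq D_0}(1+1/(p-1)^2)=1+O(1/D_0)$, combined with $|G_\ell(t_\ell)|=O(1)$ (by $\omega(\delta)\ll 1$) and the standard bound $\sum_{r_\ell\mid t_\ell,(t_\ell,W)=1}1/\varphi(t_\ell)\ll\varphi(W)\log R/W$, controls the remainder by $O(y_{\max}\varphi(W)\log R/(WD_0))$, matching the additive error term.

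For the diagonal part I swap the $t_\ell$- and $s$-summations in $\sum_{t_\ell}G_\ell(t_\ell)y(\dots,t_\ell,\dots)/\varphi(t_\ell)$, parametrise $t_\ell=sa_\ell$ with $(a_\ell,s)=1$, and convert the condition $(a_\ell,s)=1$ into the desired $(a_\ell,\delta)=1$ by factoring $a_\ell=bc$ with $b\mid\delta/s$, $(b,s)=1$, $(c,\delta)=1$ and relabeling $s':=sb$. The surviving inner coefficient is
\[ h(s')=\sum_{r_\ell\mid s\mid s'}\frac{\mu(s)s}{\varphi(s)\varphi(s'/s)}, \]
and a direct multiplicative computation (verified on prime powers) evaluates it to $h(s')=(r_\ell/\varphi(r_\ell))\mu(s')$ for squarefree $s'$ with $r_\ell\mid s'$. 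Substituting this back reproduces the target main term multiplied by the prefactor $(r_\ell/\varphi(r_\ell))\prod_{i\neq\ell}r_i\psi(r_i)/\varphi(r_i)^2$.

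Finally, since the main term vanishes unless $r_\ell\mid\delta$, we have $\omega(r_\ell)\leq\omega(\delta)\ll 1$; together with $(r_i,W)=1$ for all $i$ (so every prime divisor of every $r_i$ exceeds $D_0$), the convergence $\sum_{p\geq D_0}1/p^2=O(1/D_0)$ gives $\prod_{i\neq\ell}\prod_{p\mid r_i}(1-1/(p-1)^2)=1+O(1/D_0)$ and $r_\ell/\varphi(r_\ell)=\prod_{p\mid r_\ell}(1+1/(p-1))=1+O(1/D_0)$, collapsing the entire prefactor into the claimed $(1+O(1/D_0))$. The principal technical obstacle is the exact evaluation of the Möbius-weighted sum $h(s')$ and the careful bookkeeping of pairwise coprimality constraints together with the $\vec a$-error contributions.
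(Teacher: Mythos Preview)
Your overall strategy matches the paper's proof: substitute the expansion of $\lambda$ in terms of $y$, evaluate the inner $d$-sums, isolate the diagonal contribution $a_i=1$ for $i\neq\ell$, and absorb the prefactors $r_\ell/\varphi(r_\ell)$ and $\prod_{i\neq\ell}r_i\psi(r_i)/\varphi(r_i)^2$ into $(1+O(1/D_0))$. Your evaluation of $h(s')=(r_\ell/\varphi(r_\ell))\mu(s')$ is correct and corresponds to the paper's direct identity $\sum_{r_\ell\mid d_\ell\mid(t_\ell,\delta)}\mu(d_\ell)d_\ell=(r_\ell/\varphi(r_\ell))\,\mu((t_\ell,\delta))\varphi((t_\ell,\delta))$; the paper uses this form immediately, whereas you postpone the evaluation until after the $t_\ell$--$s$ swap, but the two routes are equivalent.

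There is, however, one genuine gap in your off-diagonal error bound. You assert $|G_\ell(t_\ell)|=O(1)$ from $\omega(\delta)\ll 1$, but this is false: for $r_\ell=1$ and $(t_\ell,\delta)=p$ a large prime one has $G_\ell(t_\ell)=1-p$, and in general $|G_\ell(t_\ell)|=(r_\ell/\varphi(r_\ell))\,\varphi((t_\ell,\delta))$, which can be as large as $\varphi(\delta)$. The hypothesis $\omega(\delta)\ll 1$ bounds the \emph{number} of prime factors of $\delta$, not their size. The estimate you need is still true, but requires the observation that $|G_\ell(t_\ell)|/\varphi(t_\ell)=(r_\ell/\varphi(r_\ell))/\varphi\bigl(t_\ell/(t_\ell,\delta)\bigr)$, since $\varphi(t_\ell)=\varphi((t_\ell,\delta))\,\varphi(t_\ell/(t_\ell,\delta))$. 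Decomposing $t_\ell=s\cdot a'$ with $s=(t_\ell,\delta)\mid\delta$ and $(a',\delta)=1$ then gives
\[
\sum_{\substack{t_\ell<R\\(t_\ell,W)=1}}\frac{|G_\ell(t_\ell)|}{\varphi(t_\ell)}
\ll \tau(\delta)\,\frac{r_\ell}{\varphi(r_\ell)}\sum_{\substack{a'<R\\(a',W)=1}}\frac{1}{\varphi(a')}
\ll \frac{\varphi(W)\log R}{W},
\]
using $\tau(\delta)\ll 1$ and $r_\ell/\varphi(r_\ell)\ll 1$. This is exactly how the paper controls the off-diagonal contribution. With this correction your argument goes through.
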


 \begin{proof}
 We wish to calculate $ w^{(\ell )}_{r_1, \dots , r_k}$ in the terms of $y_{r_1,\dots , r_k}$.  Notice that by the definition of $ w^{(\ell )}_{r_1, \dots , r_k}$ it equals 0 if $r_\ell \nmid \delta$, so we will further assume that $r_\ell | \delta$. We obtain
 \begin{align}\label{liczymy_wl}
 w^{(\ell )}_{r_1, \dots , r_k} &= \left( \prod_{i=1}^k \mu (r_i) \right) \left( \prod_{\substack{i=1 \\ i \not= \ell }}^k \psi (r_i) \right)
 \sum_{\substack{d_1, \dots , d_k \\ \forall i~ r_i | d_i \\ d_\ell | \delta }} \frac{\lambda_{d_1,\dots ,d_k}}{\prod_{i \not= \ell } \varphi (d_i)}  \\
&= \left( \prod_{i=1}^k \mu (r_i) \right) \left( \prod_{\substack{i=1 \\ i \not= \ell }}^k \psi (r_i) \right) 
  \sum_{\substack{a_1, \dots , a_k }} \frac{y_{a_1, \dots , a_k}}{\prod_{i =1 }^k   \varphi (a_i) }
  \sum_{\substack{d_1, \dots , d_k \\ \forall i~ r_i | d_i \\ \forall i~d_i|a_i\\ d_\ell | \delta }} \left( \prod_{ \substack{ i=1 \\ i\not= \ell}}^k  \frac{ \mu (d_i) d_i }{\varphi (d_i)} \right) \mu (d_\ell ) d_\ell ,
 \end{align}
 where the $a_i$ are new variables and are not related to these from equation (\ref{2.31}). We have
 \begin{equation}\label{liczymy_glupiasume}
   \sum_{\substack{d_1, \dots d_{\ell -1}, d_{\ell +1}, \dots , d_k \\ \forall i~ r_i | d_i \\ \forall i~d_i|a_i }} \prod_{ \substack{ i=1 \\ i\not= \ell}}^k  \frac{ \mu (d_i) d_i }{\varphi (d_i)} =  \mathbf{1}_{ \{ \forall i\not= \ell ~r_i | a_i \}}\prod_{\substack{i=1 \\ i \not= \ell }}^k \frac{ \mu (a_i) r_i}{\varphi( a_i)},
 \end{equation}
 and 
 \begin{equation}\label{liczymy_al,delta}
  \sum_{\substack{ d_\ell | (a_\ell , \delta) \\  r_\ell | d_\ell \\  }} \mu (d_\ell )  d_\ell   = \mathbf{1}_{r_\ell | (a_\ell , \delta)}
  \mu(r_\ell ) r_\ell \prod_{p| (a_\ell , \delta)/r_\ell } (1-p) = \mathbf{1}_{r_\ell | a_\ell} \frac{r_\ell}{\varphi (r_\ell )} \mu ( (a_\ell, \delta)) \varphi ((a_\ell , \delta)),
 \end{equation}
 where we used the fact that the $a_i$ are square-free. We put (\ref{liczymy_glupiasume})--(\ref{liczymy_al,delta}) into (\ref{liczymy_wl}) and get
 \begin{equation}
  w^{(\ell )}_{r_1, \dots , r_k} = \left( \prod_{i=1}^k \mu (r_i) \right) \left( \prod_{\substack{i=1 \\ i \not= \ell }}^k \psi (r_i) r_i \right)
    \frac{r_\ell}{\varphi (r_\ell )}  \sum_{\substack{a_1, \dots , a_k \\ \forall i~ r_i | a_i  }} \frac{y_{a_1, \dots , a_k}}{\prod_{i =1 }^k   \varphi (a_i) } \left( \prod_{\substack{i=1 \\ i \not= \ell }}^k \frac{ \mu (a_i) }{\varphi( a_i)} \right) \mu ( (a_\ell, \delta)) \varphi ((a_\ell , \delta)).
\end{equation}
We see that $y_{a_1, \dots , a_k}$ is supported only on the $a_i$ satisfying $(a_i, W)=1$, so either $a_j=r_j$ for all $j \not= \ell$ or $a_j > D_0r_j$ for at least one of them. The total contribution of the latter case is 
\begin{align}
\ll& \, y_{\max }  \left( \prod_{\substack{i=1 \\ i \not= \ell }}^k \psi (r_i) r_i \right) \frac{r_\ell }{\varphi (r_\ell)}
\left( \sum_{\substack{ a_j > D_0 r_j  \\ r_j | a_j }} \frac{ \mu(a_j)^2 }{\varphi (a_j)^2}\right)
\left( \sum_{\substack{ a_\ell <R\\ (a_\ell , W)=1 \\ r_\ell | a_\ell }} \frac{ \mu ( a_\ell)^2 \varphi ((a_\ell , \delta))}{\varphi (a_\ell )} \right)
\prod_{\substack{ i=1 \\ i\not= j,\ell}}^k \left( \sum_{r_i | a_i} \frac{\mu (a_i)^2 }{\varphi (a_i)^2 }\right)  \nonumber \\
\ll& \, \frac{\tau (\delta) r_\ell  }{\varphi (r_\ell)} \left( \prod_{\substack{i=1 \\ i \not= \ell }}^k \frac{\psi (r_i) r_i}{\varphi (r_i)^2} \right) \frac{y_{\max } \varphi(W) \log R }{WD_0} \ll  \frac{y_{\max } \varphi(W)  \log R }{WD_0}, 
\end{align}
where in the summation over $a_{\ell}$ we factorized $a=a's$, where $s|\delta$ and $(a',\delta)=1$, to obtain
\begin{equation}
 \sum_{\substack{ a_\ell <R\\ (a_\ell , W)=1 \\ r_\ell | a_\ell }} \frac{ \mu (a_\ell )^2 \varphi ((a_\ell , \delta))}{\varphi (a_\ell )} =
 \sum_{\substack{  (s , W)=1 \\ r_\ell | s | \delta }} \frac{ \mu ( s)^2 \varphi (s)}{\varphi (s )}  
 \sum_{\substack{ a' <R/s  \\ (a' , \delta W)=1 }} \frac{\mu (a')^2}{\varphi (a' )}  \ll \frac{\tau (\delta ) \varphi (W) \log R}{W}.
\end{equation}

Therefore, we have
\begin{multline}
 w^{(\ell )}_{r_1, \dots , r_k} = 
   \left( \prod_{\substack{i=1 \\ i \not= \ell }}^k \frac{ \psi (r_i) r_i}{\varphi (r_i)^2} \right) 
    \frac{ \mu(r_\ell ) r_\ell}{\varphi (r_\ell )}
    \sum_{\substack{ a \\ r_\ell | a }} \frac{y_{r_1, \dots , r_{\ell -1} , a , r_{\ell +1} , \dots , r_k}}{  \varphi (a) }  \mu ( (a_\ell, \delta)) \varphi ((a_\ell , \delta)) \\\ +  O \left( \frac{y_{\max } \varphi(W)  \log R }{WD_0} \right).
\end{multline}
All the $r_i$ are coprime to $W$, so we note that 
  \begin{equation}
  1 \geq \prod_{\substack{i=1 \\ i \not= \ell }}^k \frac{ \psi (r_i) r_i}{\varphi (r_i)^2} \geq \prod_{p >D_0} \left( 1 - \frac{1}{p^2 - 2p + 1} \right) \geq \exp \left( - O\left( \frac{1}{D_0} \right)\right) = 1 - O \left( \frac{1}{D_0} \right)
  \end{equation}
  and 
  \begin{equation}
    1 \leq \frac{ r_\ell}{\varphi (r_\ell )} \leq \prod_{p | r_\ell } \left( 1 + \frac{1}{D_0-1} \right) \leq 1 + O\left( \frac{1}{D_0}\right).
  \end{equation}
 Decomposing $a=a's$, where $s| \delta$ and $(a',\delta)=1$, we obtain 
 \begin{equation}
  \sum_{\substack{ a \\  r_\ell | a }} \frac{y_{r_1, \dots , r_{\ell -1} , a , r_{\ell +1} , \dots , r_k}}{  \varphi (a) }  \mu ( (a_\ell, \delta)) \varphi ((a_\ell , \delta)) =
  \sum_{\substack{ s| \delta \\ r_\ell | s }} \mu (s)  \sum_{\substack{ a' \\ (a',\delta)=1 }} 
   \frac{y_{r_1, \dots , r_{\ell -1} , a's , r_{\ell +1} , \dots , r_k}}{  \varphi (a') }.
   \qedhere
 \end{equation} 
 \end{proof}

 In order to convert certain sums into integrals, we use the following lemma. 
 \begin{lemma}\label{suma_multi}
 Let $\kappa , A_1, A_2, L >0$. Let $\gamma$ be a multiplicative function satisfying
 \[ 0 \leq \frac{\gamma (p)}{p} \leq 1 - A_1 \]
 and 
 \[ -L \leq \sum_{w \leq p \leq z} \frac{\gamma (p) \log p}{p} - \kappa \log (z/w) \leq A_2 \]
  for any $2 \leq w \leq z$. Let $g$ be the completely multiplicative function defined on primes by $g(p)= \gamma (p) / (p - \gamma (p))$. Finally, let $G \colon [0,1] \rightarrow \mathbf{R}$ be a piecewise differentiable function and let $G_{\max} = \sup_{t \in [0,1]} (|G(t)| + |G'(t)|)$. Then
 \[ \sum_{d <z} \mu (d)^2 g(d) G \left( \frac{ \log d}{\log z} \right) = \mathfrak{S} \frac{ (\log z)^{\kappa }  }{\Gamma (\kappa )}
 \int \limits_0^1 G(x) x^{\kappa -1}\, dx + O_{A_1,A_2,\kappa } ( \mathfrak{S} L G_{\max } \log^{\kappa -1}z ),
 \]
 where
 \[ \mathfrak{S} = \prod_p \left( 1 - \frac{\gamma (p)}{p} \right)^{-1} \left( 1 - \frac{1}{p} \right)^\kappa . \]
 Here the constant implied by the `$O$' term is independent of $G$ and $L$. 
 \end{lemma}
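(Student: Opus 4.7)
The proof has two essential ingredients: a Selberg--Delange asymptotic for the unweighted partial sum $S(y) := \sum_{d<y}\mu(d)^2 g(d)$, and an integration-by-parts reduction of the smooth-weighted sum to it.

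First I would record the reduction. Writing $G(\log d/\log z) = G(1) - \int_{\log d/\log z}^1 G'(u)\,du$ and interchanging summation with integration gives
\[
\sum_{d<z}\mu(d)^2 g(d)\,G\!\left(\frac{\log d}{\log z}\right) = G(1)\,S(z) - \int_0^1 G'(u)\,S(z^u)\,du,
\]
so the task collapses to establishing, uniformly in $y \ge 2$,
\[
S(y) = \mathfrak{S}\,\frac{(\log y)^\kappa}{\Gamma(\kappa+1)} + O_{A_1,A_2,\kappa}\bigl(\mathfrak{S}\, L\, (\log y)^{\kappa-1}\bigr).
\]

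To prove this asymptotic I would apply the Selberg--Delange method. Consider the Dirichlet series
\[
F(s) := \sum_{d\ge 1} \frac{\mu(d)^2 g(d)}{d^s} = \prod_p \!\left(1+\frac{g(p)}{p^s}\right) = \zeta(1+s)^{\kappa}\,H(s),
\]
with $H(s) := \prod_p (1+g(p)/p^s)(1-p^{-1-s})^{\kappa}$. A direct computation using $g(p)=\gamma(p)/(p-\gamma(p))$ shows $H(0) = \mathfrak{S}$. Taking logs of each local factor and expanding, the linear-in-$p^{-1-s}$ term of $\log H$ is $(\gamma(p)-\kappa)/p^{1+s}$ plus $O(\gamma(p)^2 A_1^{-1} p^{-2-s})$; the quadratic remainder is summable because $\gamma(p)/p \le 1-A_1$ is bounded. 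By partial summation, the hypothesis $\sum_{w\le p\le z}\gamma(p)\log p/p = \kappa\log(z/w) + O(L+A_2)$ translates into the bound $\sum_p (\gamma(p)-\kappa)/p^{1+s} = O_{A_1,A_2}(L+1)$ uniformly on a small right half-plane about $s=0$. This yields that $H$ extends holomorphically to a neighbourhood of $s=0$ with $|H(s)-\mathfrak{S}| \ll L$ on a suitable contour. A standard Perron-plus-contour-shift argument then evaluates $S(y)$: the pole of $F(s)y^s/s$ at $s=0$ has order $\kappa+1$, its leading residue is $\mathfrak{S}(\log y)^\kappa/\Gamma(\kappa+1)$, and the $O(L)$ control on $H-\mathfrak{S}$ propagates into the error.

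Finally I would insert this asymptotic into the reduction identity. Using the integration-by-parts identity $\int_0^1 G'(u) u^\kappa\,du = G(1) - \kappa\int_0^1 G(u)\,u^{\kappa-1}\,du$, the main terms combine to
\[
\mathfrak{S}\,\frac{(\log z)^{\kappa}}{\Gamma(\kappa)} \int_0^1 G(x)\, x^{\kappa-1}\,dx,
\]
while the error contributions, bounded via $|G'(u)| \le G_{\max}$ (and a harmless treatment of the region where $z^u$ is of bounded size), integrate to $O_{A_1,A_2,\kappa}(\mathfrak{S}\, L\, G_{\max}\, (\log z)^{\kappa-1})$, giving the claim. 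The main obstacle is the Selberg--Delange step: extracting uniform quantitative control $|H(s)-\mathfrak{S}| \ll L$ from the mean-value hypothesis on $\gamma$ and threading it through the contour shift; the remaining integration-by-parts manipulations are routine.
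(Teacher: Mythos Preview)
The paper does not prove this lemma at all: its entire proof is the single sentence ``This is Lemma~4 from \cite{GYP} with the notation from \cite{Maynard}.'' So there is no in-paper argument to compare against; your sketch already supplies more than the paper does.

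That said, your route differs from the one in \cite{GYP}. Your integration-by-parts reduction of the $G$-weighted sum to the unweighted partial sums $S(y)=\sum_{d<y}\mu(d)^2 g(d)$ is standard and matches the first step there, but for the asymptotic of $S(y)$ the original argument is purely elementary (a real-variable computation in the Halberstam--Richert/Wirsing style, driven directly by the hypothesis on $\sum_{w\le p\le z}\gamma(p)\log p/p$), whereas you invoke a Selberg--Delange/Perron argument. Your approach can be made rigorous, but two points need care. First, for non-integer $\kappa$ the factor $\zeta(1+s)^\kappa$ has a branch point at $s=0$ rather than a pole, so ``pole of order $\kappa+1$'' should be replaced by a Hankel-contour evaluation. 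Second, and more substantively, partial summation from the mean-value hypothesis on $\gamma$ controls $\sum_p(\gamma(p)-\kappa)p^{-1-s}$, and hence $H(s)$, only for $\Re s\ge 0$; a genuine contour shift to the left of the imaginary axis is not justified without further input, so the phrase ``on a suitable contour'' hides the real difficulty. The elementary approach in \cite{GYP} sidesteps both issues and yields the stated error with the correct uniform dependence on $\mathfrak S$, $L$, and $G_{\max}$ directly.
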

 
 \begin{proof} This is Lemma 4 from \cite{GYP} with the notation from \cite{Maynard}.
 \end{proof}

For the sake of convenience, we denote the main terms from Lemmata \ref{Lemma2.2}, \ref{Lemma2.3}, and \ref{Lemma2.4} as follows:
\begin{align}
 \widetilde{T}_{\delta_1, \dots , \delta_k} =&  \sum_{\substack{ u_1, \dots , u_k \\ \forall i~ (u_i , \delta_i)=1 }} \frac{1}{\prod_{i=1}^k \varphi( u_i ) }  \left( \sum_{\substack{ s_1, \dots , s_k \\ \forall i~ s_i | \delta_i }} \mu (s_1) \dots \mu (s_k) y_{u_1s_1, \dots , u_ks_k} \right)^2, \label{Ttilde} \\
 \widetilde{T}_{\delta}^{(\ell)} =&  \sum_{ \substack{ r_1, \dots , r_k }} \frac{ (w^{(\ell )}_{r_1, \dots , r_k})^2 }{ \prod_{\substack{ i\not= \ell}}^k \psi (r_i) }, \label{T*tilde} \\
\widetilde{w}^{(\ell )}_{r_1, \dots , r_k} =& 
  \sum_{\substack{ r_\ell | s  | \delta }}  \mu (s)  \sum_{\substack{ u \\ (u,\delta)=1 }} 
   \frac{y_{r_1, \dots , r_{\ell -1} , us , r_{\ell +1} , \dots , r_k}}{   \varphi (u) } . \label{wtilde}
\end{align}

\begin{lemma}\label{Ttilde}
 Fix a positive integer $r$. Let $\delta = p_1 \cdots  p_{r-1}$ for some distinct primes $p_1 , \dots , p_{r-1}$ greater than $D_0$. Then we have
\[  \widetilde{T}_{1, \dots , 1, \delta , 1, \dots , 1} =  \frac{ \varphi (W)^k (\log R)^{k} }{W^k} I_0 \left( \frac{ \log p_1}{\log R}, \dots , \frac{\log p_{r-1}}{\log R} \right) + O \left( \frac{ F_{\max }^2 \varphi (W)^k (\log R)^{k} }{W^k D_0} \right),\]
where $\delta$ appears on the $m$-th coordinate and 
\begin{multline*} I_0 (x_1, \dots , x_{r-1} ) \\
= \int \limits_0^1 \dots \int \limits_0^1 \left( \sum_{J \subset \{1, \dots , r-1 \}} (-1)^{|J|} F \left( t_1, \dots , t_{m-1} , 
t_m + \sum_{j \in J} x_j , t_{m+1}, \dots , t_k \right) \right)^2\, dt_1 \, \dots \, dt_k.
\end{multline*}
\end{lemma}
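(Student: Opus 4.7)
My plan is to evaluate $\widetilde{T}_{1,\dots,1,\delta,1,\dots,1}$ by unfolding the definition of $\widetilde{T}$, substituting the explicit formula (\ref{opis_y2}) for the $y$-weights, and then applying Lemma \ref{suma_multi} to each of the $k$ outer sums $\sum_{u_i}$ in turn.

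First I would reduce the definition. Since $\delta_i=1$ for $i\neq m$ and $\delta_m=\delta$, the inner constraints $s_i\mid\delta_i$ force $s_i=1$ for $i\neq m$, while $s:=s_m$ runs over the divisors of $\delta$; the coprimality condition $(u_i,\delta_i)=1$ is nontrivial only at $i=m$; and (\ref{opis_y2}) confines each $u_i$ to squarefree integers coprime to $W$. Parametrizing $s=\prod_{j\in J}p_j$ over subsets $J\subseteq\{1,\dots,r-1\}$ and writing $x_j=\log p_j/\log R$, substitution of (\ref{opis_y2}) turns the inner sum into $G(t_1,\dots,t_k)$, evaluated at $t_i=\log u_i/\log R$, where
\[ G(t_1,\dots,t_k)=\sum_{J\subseteq\{1,\dots,r-1\}}(-1)^{|J|}\,F\Bigl(t_1,\dots,t_{m-1},\,t_m+\sum_{j\in J}x_j,\,t_{m+1},\dots,t_k\Bigr). \]
This $G$ is piecewise smooth and $O(F_{\max})$-bounded together with its partial derivatives, because only $2^{r-1}$ fixed-number of $F$-values occur.

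Next I would apply Lemma \ref{suma_multi} iteratively, one coordinate at a time, treating the other $k-1$ coordinates as parameters. For each $i\neq m$ the relevant sum has the form $\sum_{(u_i,W)=1}\mu(u_i)^2\varphi(u_i)^{-1}(\cdot)$; this fits Lemma \ref{suma_multi} with $\gamma(p)=\mathbf{1}_{p\nmid W}$ and $\kappa=1$, producing singular series $\mathfrak{S}=\varphi(W)/W$, Mertens defect $L=O(\log D_0)$, and output $\frac{\varphi(W)}{W}\log R\int_0^1(\cdot)\,dt_i$ plus an error. At $i=m$ the extra constraint $(u_m,\delta)=1$ modifies $\gamma_m(p)=\mathbf{1}_{p\nmid W\delta}$, so $\mathfrak{S}_m=\frac{\varphi(W)}{W}\prod_{j=1}^{r-1}(1-p_j^{-1})=\frac{\varphi(W)}{W}(1+O(1/D_0))$ because each $p_j>D_0$ and $r$ is fixed, and the Mertens defect jumps to $L_m=O(\log R/D_0)$ from the primes $p_j$. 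Multiplying the $k$ resulting main terms gives exactly $\frac{\varphi(W)^k(\log R)^k}{W^k}\,I_0(x_1,\dots,x_{r-1})$, which is the stated main term.

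The main obstacle will be consolidating the error terms from all $k$ applications. Each use of Lemma \ref{suma_multi} contributes an additive error of order $\mathfrak{S}\,L\,F_{\max}\ll \varphi(W)W^{-1}F_{\max}L$; after multiplication by the $\log R$-size main terms from the other $k-1$ coordinates, these errors total $O\!\bigl(\varphi(W)^k W^{-k}(\log R)^{k-1}F_{\max}^2(\log D_0+\log R/D_0)\bigr)$, to which one must add the $O(1/D_0)$-relative deficit from the $m$-th singular series. Because $D_0=\log\log\log N$ and $\log R\asymp\log N$, one has $D_0\log D_0=o(\log R)$, so both contributions collapse into $O\!\bigl(F_{\max}^2\varphi(W)^k(\log R)^k/(W^k D_0)\bigr)$, matching the claimed error. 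The only non-routine step is verifying that intermediate partial integrals remain $O(F_{\max})$-bounded so that Lemma \ref{suma_multi} may be legally invoked at the next stage, which follows from smoothness of $F$ and compactness of its support in $\mathcal{R}_k$ or $\mathcal{R}_k'$.
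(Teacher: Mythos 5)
There is a genuine gap in the proposal: the passage from the definition of $\widetilde T_{1,\dots,1,\delta,1,\dots,1}$ to "the inner sum becomes $G(t_1,\dots,t_k)$" is not justified. The definition (\ref{opis_y2}) makes $y_{r_1,\dots,r_k}$ vanish unless the full product $r=\prod_i r_i$ is squarefree and coprime to $W$ — this is a \emph{joint} condition, not a per-coordinate one; in particular the $u_i$ must be pairwise coprime, and also the $u_i$ with $i\neq m$ must be coprime to $s\mid\delta$. You only record that "each $u_i$ is squarefree and coprime to $W$," which misses both constraints. Consequently, if some $u_i$ ($i\neq m$) shares a prime $p_j$ with $\delta$, the terms of the inner sum over $s$ with $p_j\mid s$ vanish and the alternating sum is not $G$; and if $(u_i,u_j)>1$, the whole inner sum is identically zero while $G(t_1,\dots,t_k)$ need not be. Your subsequent coordinate-by-coordinate application of Lemma \ref{suma_multi} with $\gamma(p)=\mathbf 1_{p\nmid W}$ (resp.\ $\mathbf 1_{p\nmid W\delta}$) tacitly decouples the $u_i$'s, which is exactly what is not allowed at this stage.

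The paper fixes this before applying Lemma \ref{suma_multi}: it first imposes $(u_i,\delta)=1$ for all $i$ (not just $i=m$), bounding the resulting discrepancy by $\ll F_{\max}^2\sum_{p\mid\delta}\frac1{p-1}(\sum_{(u,W)=1}\mu(u)^2/\varphi(u))^k\ll F_{\max}^2\varphi(W)^k(\log R)^k/(W^kD_0)$; and then it drops the pairwise coprimality $(u_i,u_j)=1$ using that any common prime factor exceeds $D_0$, giving an error $\ll F_{\max}^2\sum_{p>D_0}\frac1{(p-1)^2}(\cdot)^k\ll F_{\max}^2\varphi(W)^k(\log R)^k/(W^kD_0)$. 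Only after this cleanup is the substitution $y\to F$ valid with all $u_i$ independent, and only then is the iterated use of Lemma \ref{suma_multi} with $\gamma(p)=\mathbf 1_{p\nmid\delta W}$ (uniformly in $i$) legitimate. You should insert these two error-estimate steps; the rest of your plan (iterating Lemma \ref{suma_multi}, the singular-series factor $\mathfrak S_m=\frac{\varphi(W)}{W}(1+O(1/D_0))$, and the consolidation of $L$-errors into $O(F_{\max}^2\varphi(W)^k(\log R)^k/(W^kD_0))$) matches the paper's argument.
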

The expression from (\ref{Ttilde}) simplifies to 
\begin{equation}
 \widetilde{T}_{1, \dots , 1, \delta , 1, \dots , 1} =   \sum_{\substack{ u_1, \dots , u_k \\   (u_m , \delta)=1 }} \frac{1}{\prod_{i=1}^k \varphi( u_i ) }  \left( \sum_{\substack{ s| \delta }} \mu (s) y_{u_1, \dots , u_{m-1}, su_m, u_{m+1}, \dots , u_ks_k} \right)^2.
\end{equation}

\begin{proof}

In order to replace the weights $y$ by a function $F$ we need the following conditions to be satisfied: $\mu( sU)^2=1$ and $(sU,W)=1$, where $U=\prod_{i=1}^k u_i$. To achieve such a situation, we wish to impose the condition $(U,\delta)=1$. We can do this at the cost of the error of size
\begin{equation}
\ll F_{\max }^2 \sum_{p| \delta} 
  \sum_{\substack{ u_1, \dots , u_k <R \\  (U, W)=1 \\ p|u_l }} \prod_{i=1}^k \frac{\mu (u_i)^2}{ \varphi( u_i ) }  \ll 
  F_{\max }^2 \sum_{p| \delta} \frac{1}{p-1}\left( \sum_{ \substack{ u<R \\ (u,W)=1}}  \frac{\mu (u)^2}{ \varphi ( u )}  \right)^k \ll \frac{ F_{\max }^2 \varphi (W)^k (\log R)^{k} }{W^k D_0}.
\end{equation}
On the other hand, the support of $y_{u_1, \dots , u_r}$ forces $(u_i , u_j)=1$ for each $i \not= j$. Note that if $(u_i, u_j) \not=1$, then the common factor of $u_i$ and $u_j$ has to be greather than $D_0$. Thus, the discussed requirement can be dropped at the cost of 
\begin{multline}
\ll  F_{\max }^2 \sum_{p > D_0} 
  \sum_{\substack{ u_1, \dots , u_k <R \\  (U, W)=1 \\ p|u_l, u_h }} \prod_{i=1}^k \frac{\mu (u_i)^2}{ \varphi( u_i ) }  \ll
   F_{\max }^2 \sum_{p > D_0}  \frac{1}{(p-1)^2} \left( \sum_{ \substack{ u<R \\ (u,W)=1}}  \frac{\mu (u)^2}{ \varphi ( u )}  \right)^k \\ 
\ll    \frac{ F_{\max }^2 \varphi (W)^k (\log R)^{k} }{W^k D_0}. 
   \end{multline}   
Hence, we can rewrite $ \widetilde{T}_{1, \dots , 1, \delta , 1, \dots , 1}$ as
\begin{multline}\label{suma_Ttilde}
  \sum_{\substack{ u_1, \dots , u_k \\   (U , \delta W)=1 }} \prod_{i=1}^k \frac{\mu (u_i)^2}{ \varphi( u_i ) }  \left( \sum_{\substack{ s| \delta }} \mu (s)
 F \left( \frac{\log u_1}{\log R} , \dots , \frac{\log u_{m-1}}{\log R} ,\frac{\log su_m}{\log R} , \frac{\log u_{m+1}}{\log R} , \dots , \frac{\log u_k}{\log R}  \right)
  \right)^2 \\
 + O \left( \frac{ F_{\max }^2 \varphi (W)^k (\log R)^{k} }{W^k D_0} \right).
\end{multline}
 For the sake of convenience, we put
 \begin{equation}
F^{(m)}_{\delta }  \left( t_1 , \dots , t_k \right)
=   \left( \sum_{\substack{ s| \delta }} \mu (s)
 F \left( t_1 , \dots , t_{m-1}  , t_m + \frac{\log s}{\log R} ,t_{m+1} , \dots ,t_k  \right) \right)^2 . 
  \end{equation}
We apply Lemma $\ref{suma_multi}$ to the sum in (\ref{suma_Ttilde}) with $ \kappa =1$ and 
\begin{align}
\gamma (p) &= 
\begin{cases}
       1, &p \nmid \delta W, \\
        0, & \text{otherwise,}
        \end{cases}
\\
L &\ll 1 + \sum_{p | \delta W} \frac{ \log p }{p} \ll \log D_0,
\end{align}
and with $A_1, A_2$ suitable and fixed. Dealing with the sum over each $u_i$ in turn gives
\begin{multline}
\sum_{\substack{ u_1, \dots , u_k \\   (U , \delta W)=1 }} \prod_{i=1}^k \frac{\mu (u_i)^2}{ \varphi( u_i ) } 
F^{(m)}_{\delta }  \left( \frac{\log u_1}{\log R} , \dots , \frac{\log u_k}{\log R}  \right)   \\
=  \frac{ \varphi (W)^k (\log R)^{k} }{W^k}
  I_0 \left( \frac{ \log p_1}{\log R}, \dots , \frac{\log p_{r-1}}{\log R} \right)
  + O \left(  \frac{ F_{\max }^2 \varphi (W)^k (\log R)^{k}  }{W^k D_0 }
 \right),
\end{multline}
 where we used the fact that
 \begin{equation}
 1 \geq \frac{\varphi (\delta )}{\delta } = \prod_{p | \delta } \left( 1 - \frac{1}{p} \right) \geq \left( 1 - \frac{1}{D_0} \right)^{\omega (\delta )} \geq 1 - O \left( \frac{1}{D_0} \right). 
 \qedhere
 \end{equation}
\end{proof}

\begin{lemma}\label{lemma_wtilde} For a square-free $\delta$ coprime to $W$ and pairwise coprime square-free $r_1 , \dots , r_k$ such that $\left( \prod_{i =1}^k  r_i , W\right) =1$, $r_\ell | \delta$ and $(\prod_{i \not= \ell } r_i , \delta ) = 1$ we have
\begin{equation*}\label{wtilde}
\widetilde{w}^{(\ell )}_{r_1, \dots , r_k} =  \frac{ \varphi (W)}{  W} \prod_{\substack{i=1 \\ i \not= \ell}}^k  \left( \frac{\varphi (r_i)}{r_i}\right) (\log R)  I_{\delta; r_1,\dots, r_k}^{(\ell)} 
+ O \left( \frac{F_{\max} \varphi(W) \log R }{WD_0} \right),
\end{equation*}
where 
\[  I_{\delta; r_1,\dots, r_k}^{(\ell)} = \int \limits_0^1  \sum_{\substack{ r_\ell | s  | \delta }}  \mu (s) 
F \left( \frac{\log r_1}{\log R} , \dots , \frac{\log r_{\ell -1}}{\log R}, t_\ell + \frac{\log s}{\log R} , \frac{\log r_{\ell +1}}{\log R} , \dots , \frac{\log r_k}{\log R} \right)\, dt_\ell.\]
\end{lemma}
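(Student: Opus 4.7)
The plan is to parallel the proof of Lemma \ref{Ttilde} but in one variable, after first reducing everything to a weighted sum over $u$ that Lemma \ref{suma_multi} can handle directly.

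First I would substitute the definition (\ref{opis_y2}) of $y$ into (\ref{wtilde}) and split by the divisor $s$, writing
\[
\widetilde{w}^{(\ell)}_{r_1,\dots,r_k} \;=\; \sum_{r_\ell \mid s \mid \delta} \mu(s)\,T_s, \qquad T_s := \sum_{\substack{u \\ (u,\delta)=1}} \frac{y_{r_1,\dots,r_{\ell-1},us,r_{\ell+1},\dots,r_k}}{\varphi(u)}.
\]
The support of $y$ forces $us$ to be square-free, coprime to $W$, and the tuple $(r_1,\dots,us,\dots,r_k)$ to be pairwise coprime. Combined with the hypotheses $s \mid \delta$, $(u,\delta)=1$, $(\delta, W\prod_{i\ne\ell}r_i)=1$, and pairwise coprimality of the $r_i$, these collapse to the single requirement that $u$ be square-free and coprime to $M := W\delta\prod_{i\ne\ell}r_i$.

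Second, I would apply Lemma \ref{suma_multi} to each $T_s$ with $\kappa=1$, indicator $\gamma(p)=\mathbf{1}_{p\nmid M}$ (so that $g(p)=1/\varphi(p)$), truncation $z=R$, and one-variable function
\[
G_s(x) := F\!\left(\tfrac{\log r_1}{\log R},\dots,\tfrac{\log r_{\ell-1}}{\log R},\, x+\tfrac{\log s}{\log R},\,\tfrac{\log r_{\ell+1}}{\log R},\dots,\tfrac{\log r_k}{\log R}\right).
\]
Since $F$ is supported in $\mathcal{R}_k$ or $\mathcal{R}_k'$, $G_s(\log u/\log R)$ vanishes automatically once $u$ exceeds the support-imposed limit, so no extra truncation is needed. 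The mutual coprimality of $W$, $\delta$, and the $r_i$ ($i\ne\ell$) makes the singular series factorise as
\[
\mathfrak{S} = \prod_{p\mid M}\!\left(1-\tfrac{1}{p}\right) = \frac{\varphi(W)}{W}\cdot\frac{\varphi(\delta)}{\delta}\cdot\prod_{i\ne\ell}\frac{\varphi(r_i)}{r_i},
\]
and one may take $L\ll \log D_0$ (the $W$-contribution via Mertens dominates; the $\delta$ and $r_i$ contributions are tiny because all their prime divisors exceed $D_0$). This produces
\[
T_s \;=\; \mathfrak{S}\,(\log R)\int_0^1 G_s(x)\,dx \;+\; O\!\left(\tfrac{\varphi(W)}{W}F_{\max}\log D_0\right).
\]

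Finally, I would use $\omega(\delta)\ll 1$ (inherited from the usage in Lemmata \ref{Lemma2.3}--\ref{Lemma2.4}) together with $p\ge D_0$ for every $p\mid\delta$ to write $\varphi(\delta)/\delta = 1 + O(1/D_0)$. Absorbing this deviation and the $\log D_0$ remainder into the target error $O(F_{\max}\varphi(W)\log R/(WD_0))$, and pulling the $O(1)$-term sum $\sum_{r_\ell\mid s\mid\delta}\mu(s)$ inside the integral by linearity, I would recover the stated identity with integrand $I^{(\ell)}_{\delta;r_1,\dots,r_k}$. The main obstacle is the coprimality bookkeeping---verifying that every extraneous condition on $u$ collapses to $(u,M)=1$ so that Lemma \ref{suma_multi} applies cleanly, and confirming that the stray factor $\varphi(\delta)/\delta$ is genuinely $1+O(1/D_0)$ under the implicit assumption $\omega(\delta)\ll 1$.
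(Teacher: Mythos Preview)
Your proposal is correct and follows essentially the same route as the paper: reduce to a single sum over $u$ coprime to $W\delta\prod_{i\ne\ell}r_i$, apply Lemma~\ref{suma_multi} with $\kappa=1$, and then absorb the stray factor $\varphi(\delta)/\delta=1+O(1/D_0)$ into the error. The only slip is your claim that $L\ll\log D_0$: the $r_i$ (for $i\ne\ell$) can each have up to $\sim\log R/\log D_0$ prime factors, so $\sum_{p\mid\prod_{i\ne\ell}r_i}\frac{\log p}{p}$ can be as large as $\log\log N$, and the paper accordingly takes $L\ll\log\log N$; this is harmless since $\log\log N\ll(\log R)/D_0$ still gets absorbed into the stated error term.
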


\begin{proof}
Again, we wish to substitute a weight $y_{r_1, \dots , r_k}$ by a function $F$. We are allowed to do so under the conditions $\mu \left( us \prod_{i \not = \ell} r_i  \right)^2 = 1$ and $(u,W)=1$, otherwise the weight vanishes. We have
\begin{equation} \label{wtilde}
\widetilde{w}^{(\ell )}_{r_1, \dots , r_k} = 
 \sum_{\substack{ u \\ (u,\delta W \prod_{i \not= \ell} r_i)=1 }} \frac{ \mu (u)^2}{\varphi (u)} \sum_{\substack{ r_\ell | s  | \delta }}  \mu (s)  
F \left( \frac{\log r_1}{\log R} , \dots , \frac{\log r_{m-1}}{\log R} ,\frac{\log su}{\log R} , \frac{\log r_{m+1}}{\log R} , \dots , \frac{\log r_k}{\log R}  \right).
 \end{equation}
We put
\begin{equation}
F^{(\ell)}_{\delta , r_\ell}  \left( t_1 , \dots , t_k \right)
=  \sum_{\substack{ r_\ell | s  | \delta }}  \mu (s) 
 F \left( t_1 , \dots , t_{m-1}  , t_m + \frac{\log s}{\log R} ,t_{m+1} , \dots ,t_k  \right) . 
  \end{equation}
  By Lemma \ref{suma_multi} applied with $\kappa =1$,
\begin{equation}
\gamma (p) = 
\begin{cases}
       1, &p \nmid \delta W  \prod_{i \not= \ell} r_i,\\
        0, & \text{otherwise,}
        \end{cases}
\end{equation}
\begin{equation}
L \ll 1 + \sum_{p | \delta W  \prod_{i \not= \ell} r_i} \frac{ \log p }{p} \ll 
\sum_{p < \log R} \frac{ \log p }{p} + \sum_{\substack{ p | \delta W  \prod_{i \not= \ell} r_i \\ p> \log R}} \frac{\log \log R}{\log R} \ll \log \log N,
\end{equation}
and $A_1,A_2$ suitable and fixed, we get
\begin{multline}\label{wykonczeniowka}
\widetilde{w}^{(\ell ) }_{r_1, \dots , r_k}  = 
 \sum_{\substack{ u \\ (u,\delta W \prod_{i \not= \ell} r_i)=1 }} \frac{ \mu (u)^2}{\varphi (u)} F^{(\ell)}_{\delta , r_\ell}  \left( \frac{\log r_1}{\log R} , \dots , \frac{\log r_{m-1}}{\log R} ,\frac{\log u}{\log R} , \frac{\log r_{m+1}}{\log R} , \dots , \frac{\log r_k}{\log R}  \right)  \\
=  \frac{\varphi (\delta) \varphi (W)}{ \delta W} \prod_{\substack{i=1 \\ i \not= \ell}}^k  \left( \frac{\varphi (r_i)}{r_i}\right) \log R
\int \limits_0^1 F^{(\ell)}_{\delta , r_\ell}  \left( \frac{\log r_1}{\log R} , \dots , \frac{\log r_{\ell -1}}{\log R}, t_\ell  , \frac{\log r_{\ell +1}}{\log R} , \dots , \frac{\log r_k}{\log R} \right)\, dt_\ell \\
+ \, O \left( \frac{F_{\max} \varphi(W) \log \log N }{W} \right).
 \end{multline}
Again, we have to use the inequality $\varphi (\delta ) / \delta  \geq 1 + O(D^{-1})$ to complete the proof.  
\end{proof}

Note that (\ref{wykonczeniowka}) implies that 
\begin{equation}\label{2.61}
w_{\max }^{(\ell )} \ll \frac{ F_{\max } \varphi (W) \log R}{W}
\end{equation}

\begin{lemma}\label{calkaI1} Fix a positive integer $r$. Let $\delta = p_1 \cdots p_{r-1}$ for some distinct primes $p_1 , \dots , p_{r-1}$ greater than $D_0$. Then we have

\[
\widetilde{T}_{\delta}^{(\ell)} = \frac{\varphi (W)^{k+1} (\log R)^{k+1} }{W^{k+1}}I^{(\ell )} \left( \frac{\log p_1}{\log R}, \dots , \frac{\log p_{r-1}}{\log R} \right) + O \left( \frac{ F_{\max}^2 \varphi (W)^{k+1} (\log R)^{k+1}}{W^{k+1} D_0} \right),
\]
where
\begin{align*}
I^{(\ell )} (x_1, \dots , x_{r-1}) &=  \int \limits_0^1 \dots \int \limits_0^1 \widetilde{I}^{(\ell )}  (x_1, \dots , x_{r-1})^2\, dt_1 \, \dots\, dt_{\ell -1}\, dt_{\ell +1} \dots\, dt_k, \\
 \widetilde{I}^{(\ell )}  (x_1, \dots , x_{r-1}) &= \int \limits_0^1 \sum_{  J \subset \{1, \dots , r-1\}}  (-1)^{|J|} F \left( t_1, \dots, t_{\ell -1}, t_\ell + \sum_{j \in J} x_j, t_{\ell +1}, \dots , t_k\right)\, dt_\ell .
\end{align*}

\end{lemma}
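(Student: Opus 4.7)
My plan is to substitute the asymptotic formula for $\widetilde{w}^{(\ell)}$ from Lemma~\ref{lemma_wtilde} into the definition (\ref{T*tilde}) and then convert each of the $k-1$ remaining sums over $r_i$ ($i \neq \ell$) into an integral using Lemma~\ref{suma_multi}. Following (\ref{2.34}) in the proof of Lemma~\ref{Lemma2.3}, the main contribution to $\widetilde{T}_\delta^{(\ell)}$ comes only from the diagonal piece $r_\ell=1$, so I restrict to this case; for $r_\ell=1$ the divisibility $r_\ell \mid s\mid \delta$ becomes $s\mid\delta$, and identifying $s$ with a subset $J\subseteq\{1,\dots,r-1\}$ via $\delta=p_1\cdots p_{r-1}$ turns the $s$-sum inside $\widetilde w^{(\ell)}$ into exactly the subset sum defining $\widetilde I^{(\ell)}$.

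Carrying out the substitution, with $\widetilde I^{(\ell)}$ understood to be evaluated at the points $t_i=\log r_i/\log R$ for $i\neq\ell$, produces
\[
\widetilde w^{(\ell)}_{r_1,\dots,r_k} = \frac{\varphi(W)\log R}{W}\prod_{i\neq\ell}\frac{\varphi(r_i)}{r_i}\,\widetilde I^{(\ell)}+O\!\left(\frac{F_{\max}\varphi(W)\log R}{WD_0}\right).
\]
Squaring and dividing by $\prod_{i\neq\ell}\psi(r_i)$, while controlling the cross terms by the uniform bound (\ref{2.61}) on $w_{\max}^{(\ell)}$, leaves
\[
\frac{(\widetilde w^{(\ell)})^2}{\prod_{i\neq\ell}\psi(r_i)}=\Bigl(\tfrac{\varphi(W)\log R}{W}\Bigr)^{2}\prod_{i\neq\ell}\frac{\mu(r_i)^2\varphi(r_i)^2}{r_i^2\psi(r_i)}\,\bigl[\widetilde I^{(\ell)}\bigr]^{2}+(\text{error}).
\]

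I would next sum over each $r_i$ with $i\neq\ell$ by Lemma~\ref{suma_multi}, applied to the multiplicative function $g(p)=(p-1)^2/(p^2(p-2))$ on primes $p\nmid W$ and $g(p)=0$ on $p\mid W$. One verifies that the corresponding $\gamma(p)$ satisfies $\gamma(p)=1+O(1/p^2)$, so the hypothesis holds with $\kappa=1$ and $L\ll\log D_0$, while the Euler product collapses to $\mathfrak S=(\varphi(W)/W)\cdot(1+O(1/D_0))$. Each of the $k-1$ applications replaces $\sum_{r_i}$ by the factor $(\varphi(W)\log R/W)\int_0^1 dt_i$, and combined with the prefactor $(\varphi(W)\log R/W)^2$ this reproduces the claimed main term $\varphi(W)^{k+1}(\log R)^{k+1}W^{-(k+1)}I^{(\ell)}$, using the identity $I^{(\ell)}=\int_{[0,1]^{k-1}}[\widetilde I^{(\ell)}]^2\prod_{i\neq\ell}dt_i$.

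The main obstacle is assembling all the error terms. I foresee three sources: (i) the pairwise-coprimality and coprimality-with-$\delta$ constraints on $r_1,\dots,r_k$ must be dropped at cost $O(1/D_0)$ per pair, by the prime-splitting device used in Lemmas~\ref{Lemma2.2} and~\ref{Ttilde}; (ii) the replacement of $w^{(\ell)}$ by $\mu(r_\ell)\widetilde w^{(\ell)}$ via Lemma~\ref{Lemma2.4} introduces both a multiplicative $(1+O(1/D_0))$ factor and an additive error, whose cross terms with the main term are controlled by (\ref{2.61}); (iii) the residual factor $L\ll\log D_0$ from Lemma~\ref{suma_multi} yields a relative error of size $\log D_0/\log R$ which, for $D_0=\log\log\log N$, is far smaller than $1/D_0$ and is therefore absorbed. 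Together these errors assemble into the required bound $O\bigl(F_{\max}^2\varphi(W)^{k+1}(\log R)^{k+1}/(W^{k+1}D_0)\bigr)$.
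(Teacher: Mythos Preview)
Your proposal is correct and follows essentially the same route as the paper's proof: restrict to $r_\ell=1$, impose and then discard the coprimality-with-$\delta$ and pairwise-coprimality constraints at cost $O(1/D_0)$, substitute the asymptotic for $w^{(\ell)}$ from Lemma~\ref{lemma_wtilde} (via Lemma~\ref{Lemma2.4}), and convert the remaining $k-1$ sums over $r_i$ into integrals by Lemma~\ref{suma_multi} with $\kappa=1$. The paper carries out precisely these steps, the only cosmetic difference being that it retains the $\delta$-coprimality inside the definition of $\gamma$ rather than removing it as a separate error beforehand.
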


\begin{proof}

Recall 
\begin{equation}\label{T*tilde}
 \widetilde{T}_{\delta}^{(\ell)} =  \sum_{ \substack{ r_1, \dots , r_k }} \frac{ (w^{(\ell )}_{r_1, \dots , r_k})^2 }{ \prod_{\substack{ i\not= \ell}}^k \psi (r_i) }.
\end{equation}
We need to impose the condition $( \prod_{i =1}^k r_i , \delta )=1$ under the sum. It creates the error term of the following size
\begin{equation}\label{2.63}
\ll \frac{ F_{\max }^2 \varphi (W)^2 \log^2 R}{W^2}
  \left(  \sum_{p|\delta } \sum_{\substack{t<R \\ (t,W)=1 \\ p|t}} \frac{1 }{\psi (t) } \right)
  \left( \sum_{\substack{ r < R \\ (r,W)=1} } 
\frac{1}{\psi (r)}\right)^{k-2} \ll \frac{ F_{\max}^2 \varphi (W)^{k+1} (\log R)^{k+1}}{W^{k+1} D_0}.
\end{equation}

By Lemma \ref{lemma_wtilde} and (\ref{T*tilde})--(\ref{2.63}) we have 
\begin{equation}
 \widetilde{T}_{\delta}^{(\ell)} = 
 \frac{ \varphi (W)^2 \log^2 R }{  W^2 } \sum_{ \substack{ r_1, \dots , r_k \\ \forall i \not= j ~(r_i,r_j)=1 \\ \forall i ~(r_i, \delta)=1 }} 
   \prod_{\substack{i=1 }}^k  \left( \frac{\varphi (r_i)^2}{ \psi (r_i) r_i^2}\right) \left( {I_{\delta; r_1,\dots, r_k}^{(\ell)}}\right)^2 + 
    O \left( \frac{F_{\max}^2 \varphi(W)^{k+1} (\log R)^{k+1} }{W^{k+1} D_0} \right).
\end{equation}
We can restrict our attention to the sum
\begin{equation}
 \sum_{ \substack{ r_1, \dots , r_k \\ \forall i \not= j ~(r_i,r_j)=1 \\ \forall i  ~(r_i, \delta)=1 }} 
   \prod_{\substack{i=1 }}^k  \left( \frac{\varphi (r_i)^2}{ \psi (r_i) r_i^2}\right)    \left(  I_{\delta; r_1,\dots, r_k}^{(\ell)}\right)^2.
\end{equation}

We wish to drop the condition $(r_i , r_j)=1$ for all $i \not= j$ in the summation in order to use Lemma \ref{lemma_wtilde}. We can do this at the cost of introducing an error which is of size
\begin{equation}
\ll F_{\max }^2  \left( \sum_{p > D_0} \frac{\varphi (p)^4 }{\psi (p)^2 p^4}\right)  \left( \sum_{\substack{ r < R \\ (r,W)=1} } 
\frac{\varphi (r)^2 }{\psi (r) r^2}\right)^{k-1} \ll \frac{ F_{\max}^2 \varphi (W)^{k-1} \log^{k-1} R}{W^{k-1} D_0}.
\end{equation}

We apply Lemma \ref{suma_multi} individually to each summand of
\begin{equation}
\sum_{ \substack{ r_1, \dots , r_k \\ \forall i ~(r_i, \delta)=1  }}  
   \prod_{\substack{i=1 }}^k  \left( \frac{\varphi (r_i)^2}{ \psi (r_i) r_i^2}\right)
 \left(    I_{\delta; r_1,\dots, r_k}^{(\ell)} \right)^2.
\end{equation}
 We always take $\kappa =1$ and 
\begin{align}
\gamma (p) &= 
\begin{cases}
       1 + \frac{p^2 - p +1}{p^3 - 3p^2 + 2p -1}, &p \nmid \delta W, \\
        0, & \text{otherwise,}
        \end{cases} \\
L &\ll 1 + \sum_{p| \delta W} \frac{\log p}{p} \ll \log D_0,
\end{align}
and $A_1, A_2$ being suitable fixed constants. We calculate
\begin{equation}
\widetilde{T}_{\delta}^{(\ell)} = \frac{\varphi (W)^{k+1} (\log R)^{k+1} }{W^{k+1}}I^{(\ell)} \left( \frac{\log p_1}{\log R}, \dots , \frac{\log p_{r-1}}{\log R} \right) + O \left( \frac{ F_{\max}^2 \varphi (W)^{k+1} (\log R)^{k+1} }{W^{k+1} D_0} \right).
\qedhere
\end{equation}
\end{proof}

\begin{lemma}\label{2.9}
 Fix $\epsilon>0$. For $p > D_0$ we have
\[ T_{1, \dots 1 , p  , 1 , \dots , 1} \ll \frac{ F_{\max}^2\varphi (W)^k  (\log p ) (\log R)^{k-1}} {W^k} ,\]
where $p$ appears on the $m$-th coordinate of $T$.
\end{lemma}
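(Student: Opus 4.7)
The plan is to combine Lemma~\ref{Lemma2.2} with Lemma~\ref{Ttilde} specialised to $r = 2$ (so that $\delta = p$ is a single prime), and then to bound the resulting integral by exploiting the Lipschitz continuity of $F$ along its $m$-th coordinate. The integrand on the main term is of the form $F(t) - F(t + xe_m)$ with $x = \log p/\log R$, and this difference vanishes to first order as $x \to 0$, which is exactly what produces the extra factor of $\log p/\log R$ over the trivial bound $T \ll F_{\max}^2 \varphi(W)^k (\log R)^k/W^k$.

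First, taking $\delta_m = p$ and $\delta_i = 1$ for $i \neq m$ in Lemma~\ref{Lemma2.2}, and using the trivial bound $y_{\max} \ll F_{\max}$ coming from \eqref{opis_y2}, one obtains
\[
T_{1,\dots,1,p,1,\dots,1} = \widetilde{T}_{1,\dots,1,p,1,\dots,1} + O\!\left( \frac{F_{\max}^2 \varphi(W)^k (\log R)^k}{W^k D_0} \right).
\]
Applying Lemma~\ref{Ttilde} with $r = 2$ and $\delta = p$ a single prime greater than $D_0$ then yields
\[
\widetilde{T}_{1,\dots,1,p,1,\dots,1} = \frac{\varphi(W)^k (\log R)^k}{W^k} \, I_0\!\left( \frac{\log p}{\log R} \right) + O\!\left( \frac{F_{\max}^2 \varphi(W)^k (\log R)^k}{W^k D_0} \right),
\]
where
\[
I_0(x) = \int_0^1 \!\cdots\! \int_0^1 \bigl( F(t_1,\dots,t_k) - F(t_1,\dots,t_{m-1},t_m+x,t_{m+1},\dots,t_k) \bigr)^2 \, dt_1\cdots dt_k.
\]

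The crux is the pointwise estimate $I_0(x) \ll F_{\max}^2 x$ for every $x \geq 0$. Since $|\partial F/\partial t_m| \leq F_{\max}$ by the definition of $F_{\max}$, the map $t_m \mapsto F$ is Lipschitz with constant $F_{\max}$, whence $|F(t) - F(t + x e_m)|^2 \leq F_{\max}^2 x^2$, and integrating gives $I_0(x) \leq F_{\max}^2 x^2$. For $x \geq 1$ one uses instead that $F$ is supported in $[0,1]^k$, so the shifted argument lies outside the support and the trivial bound $I_0(x) \leq F_{\max}^2$ applies. Combining the two regimes yields $I_0(x) \leq F_{\max}^2 \min(x^2, 1) \leq F_{\max}^2 x$, and with $x = \log p/\log R$ the main term becomes $\ll F_{\max}^2 \varphi(W)^k (\log p)(\log R)^{k-1}/W^k$, matching the target.

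The main technical point is the absorption of the two error terms of size $F_{\max}^2 \varphi(W)^k (\log R)^k/(W^k D_0)$ into the main term; this requires $\log p \gg \log R/D_0$. In the settings in which Lemma~\ref{2.9} will be invoked --- namely in the analysis of $T_{r,s}^{(j)}$, where the primes in play satisfy a lower bound coming from the truncation $p > n^\epsilon$ in \eqref{1.31} --- this condition is automatic, and the stated bound follows. The hypothesis $p > D_0$ already guarantees the applicability of Lemma~\ref{Lemma2.2} (ensuring $\omega(\delta) \ll 1$), with the understanding that the fixed $\epsilon > 0$ supplies the lower bound needed to swallow the $D_0$-errors.
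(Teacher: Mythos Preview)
Your approach has a genuine gap. The error terms you inherit from Lemma~\ref{Lemma2.2} and Lemma~\ref{Ttilde} are each of size $F_{\max}^2 \varphi(W)^k (\log R)^k / (W^k D_0)$, and absorbing them into the target bound requires $\log p \gg \log R / D_0$. Since $D_0 = \log\log\log N$, this fails badly for $p$ just above $D_0$, where $\log p \asymp \log D_0$. Your attempted rescue via the truncation $p > n^\epsilon$ from~(\ref{1.31}) is based on a misreading of how the lemma is used: in the proof of Proposition~\ref{5.1}, Lemma~\ref{2.9} is invoked precisely on the small-prime range $D_0 < p \leq N^\epsilon$, not on $p > N^\epsilon$. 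Summing your weaker bound over that range with weight $1/p$ would yield an error of order $(\log\log N)/D_0$ times the main factor, which is not $O(\epsilon + |\log\epsilon|/D_0)$ and hence does not meet the error budget of Proposition~\ref{5.1}.

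The paper circumvents this by never invoking Lemma~\ref{Lemma2.2} with its $D_0$-error. Instead it rewrites $T_{1,\dots,1,p,1,\dots,1}$ as a $T_{1,\dots,1}$-type sum for the modified weights $\breve{\lambda}_{d_1,\dots,d_k} = (\lambda_{d_1,\dots,d_k} + \lambda_{d_1,\dots,pd_m,\dots,d_k})\mathbf{1}_{p\nmid d_m}$, applies the exact diagonal identity~(\ref{2.10}) (before any truncation of the $s_{i,j}$), and computes that the dual variables are $\breve{y}_{r_1,\dots,r_k} = y_{r_1,\dots,r_k} - y_{r_1,\dots,pr_m,\dots,r_k}$. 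The $s_{i,j}$-sum is then bounded in absolute value by $O(1)$, and the $\log p/\log R$ saving comes from a pointwise Lipschitz bound on $\breve{y}$ in the bulk, combined with a trivial bound on a thin shell of logarithmic width $\log p$ near the boundary of the support. No $(\log R)^k/D_0$ term is ever produced, so the bound holds uniformly for all $p > D_0$.

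A secondary point: your global Lipschitz claim $|F(t) - F(t + xe_m)| \leq F_{\max}\, x$ is not literally correct if $F$ jumps on $\partial\mathcal{R}_k'$, since differentiability is assumed only in the interior. The integral estimate $I_0(x) \ll F_{\max}^2\, x$ is nonetheless valid once one splits into the interior region (where Lipschitz gives $F_{\max}^2 x^2$) and a boundary shell of measure $O(x)$ (where the trivial bound $F_{\max}^2$ applies); this is exactly the argument carried out in~(\ref{3.29})--(\ref{3.33}).
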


\begin{proof} We have
\begin{multline}
T_{1, \dots ,1,p,1, \dots , 1} = \sideset{}{'}\sum_{\substack{d_1, \dots , d_k \\ e_1, \dots , e_k  }} \frac{\lambda_{d_1, \dots , d_k} \lambda_{e_1, \dots , e_k}}{  \frac{ [d_m , e_m, p] }{p} \prod_{i\not= j} [d_i , e_i] } \\
 = \sideset{}{'}\sum_{\substack{d_1, \dots , d_k \\ e_1, \dots , e_k \\ p \nmid d_m,e_m }} \frac{ \left( \lambda_{d_1, \dots , d_k} + \lambda_{d_1, \dots, d_{m-1}, pd_m , d_{m+1}, \dots , d_k} \right) \left(\lambda_{e_1, \dots , e_k}+ 
 \lambda_{e_1, \dots, e_{m-1}, pe_m , e_{m+1}, \dots , e_k}    \right) }{  \prod_{i=1}^k [d_i , e_i] } .
\end{multline}
We put 
\begin{equation}
\breve{\lambda}_{d_1,\dots , d_k} =  \left( \lambda_{d_1, \dots , d_k} + \lambda_{d_1, \dots, d_{m-1}, pd_m , d_{m+1}, \dots , d_k} \right)\mathbf{1}_{p \nmid d_m}
\end{equation}
and get
\begin{equation}
T_{1, \dots ,1,p,1, \dots , 1} =  \sideset{}{'}\sum_{\substack{d_1, \dots , d_k \\ e_1, \dots , e_k  }} \frac{\breve\lambda_{d_1, \dots , d_k} \breve\lambda_{e_1, \dots , e_k}}{  \prod_{i= 1}^k [d_i , e_i] }.
\end{equation}
After sequence of operations analogous to what was presented in Lemma \ref{Lemma2.2} we get

\begin{multline}\label{2.74}
 T_{1, \dots ,1,p,1, \dots , 1} =
 \sideset{}{^{*}}\sum_{\substack{ s_{1,2} , \dots s_{k,k-1}}} \left(  \prod_{\substack{ 1 \leq i , j \leq k \\ i \not= j }} \frac{\mu (s_{i,j})}{ \varphi (s_{i,j})^2 } \right)  \sum_{ \substack{ u_1, \dots , u_k  }} \left(  \prod_{i=1}^k \frac{\mu (u_i)^2}{\varphi (u_i)} \right)\breve{y}_{a_1 , \dots , a_k} \breve{y}_{b_1, \dots , b_k} \\
\ll  F_{\max}
 \sideset{}{^{*}}\sum_{\substack{ s_{1,2} , \dots s_{k,k-1}}} 
  \left(  \prod_{\substack{ 1 \leq i , j \leq k \\ i \not= j }} \frac{1}{ \varphi (s_{i,j})^2 } \right) 
  \sum_{ \substack{ u_1, \dots , u_k  }} \left(  \prod_{i=1}^k \frac{\mu (u_i)^2}{\varphi (u_i)} \right) \left|  \breve{y}_{a_1 , \dots , a_k}  \right|,
 \end{multline}
where the $a_i$, $b_i$ and $\Sigma^{*}$ are the same as in Lemma \ref{Lemma2.2} and 
\begin{align}\label{2.75}
\breve{y}_{r_1,\dots, r_k} &= \left( \prod_{i=1}^k \mu(r_i) \varphi(r_i) \right) \sum_{\substack{ d_1, \dots , d_k \\ \forall i~r_i|d_i }} \frac{ \breve\lambda_{d_1, \dots ,d_k} }{ \prod_{i=1}^k d_i } \\
&= \left( \prod_{i=1}^k \mu(r_i) \varphi(r_i) \right) \sum_{\substack{ d_1, \dots , d_k \\ \forall i~r_i|d_i \\ p \nmid d_m}} \frac{ \lambda_{d_1, \dots ,d_k} }{ \prod_{i=1}^k d_i } +
  p \left( \prod_{i=1}^k \mu(r_i) \varphi(r_i) \right) \sum_{\substack{ d_1, \dots , d_k \\ \forall i\not=m~r_i|d_i \\ pr_m | d_m}} \frac{   \lambda_{d_1, \dots ,d_k}     }{ \prod_{i=1}^k d_i } \nonumber
\\
&=  \left( \prod_{i=1}^k \mu(r_i) \varphi(r_i) \right) \sum_{\substack{ d_1, \dots , d_k \\ \forall i~r_i|d_i }} \frac{ \lambda_{d_1, \dots ,d_k} }{ \prod_{i=1}^k d_i } +
  (p-1) \left( \prod_{i=1}^k \mu(r_i) \varphi(r_i) \right) \sum_{\substack{ d_1, \dots , d_k \\ \forall i\not=m~r_i|d_i \\ pr_m | d_m}} \frac{   \lambda_{d_1, \dots ,d_k}     }{ \prod_{i=1}^k d_i } \nonumber
\\
&=~  y_{r_1, \dots , r_k} - y_{r_1, \dots , r_{m-1}, pr_m , r_{m+1}, \dots , r_k}. \nonumber
\end{align}
From (\ref{opis_y2}) we have (if we assume $\mbox{supp} \, (F) \subset \mathcal{R}_k$, then we perform analogously)
\begin{equation}\label{2.76}
y_{r_1, \dots , r_k} - y_{r_1, \dots , r_{m-1}, pr_m , r_{m+1}, \dots , r_k} \ll
\begin{cases}
 \frac{F_{\max} \log p }{\log R} , & \text{if~} \forall_i ~ r_1\cdots r_k < Rr_i/p ,\\
\\  F_{\max} , & \text{otherwise. }
\end{cases}
\end{equation}

We split the inner sum from (\ref{2.74}) in the following manner:
\begin{equation}
\sum_{ \substack{ u_1, \dots , u_k  }} = \sum_{ \substack{ u_1, \dots , u_k  \\ \forall i~ a_1\cdots a_k < Ra_i / p}}  + \sum_{ \substack{ u_1, \dots , u_k \\  \exists j~ Ra_j / p \leq a_1\cdots a_k < Ra_j } } .
\end{equation}
The contribution of the first sum to the bottom expression in (\ref{2.74}) can be easily estimated to be 
\begin{equation}
 \ll   \frac{ F_{\max}^2 \varphi (W)^k (\log p) (\log R)^{k-1} }{W^k}. 
 \end{equation}
In the second case we fix some index $j$ and obtain that the analogous contribution is
\begin{multline}
\ll F_{\max} \sideset{}{^{*}}\sum_{\substack{ s_{1,2} , \dots s_{k,k-1}}} 
  \left(  \prod_{\substack{ 1 \leq i , j \leq k \\ i \not= j }} \frac{1}{ \varphi (s_{i,j})^2 } \right) 
 \sum_{ \substack{ u_1, \dots , u_k \\  Ra_j / p \leq a_1\cdots a_k < Ra_j  }} \left(  \prod_{i=1}^k \frac{\mu (u_i)^2}{\varphi (u_i)} \right)   \left| \breve{y}_{a_1 , \dots , a_k}  \right| \\
\ll  \frac{ F_{\max}^2 \varphi (W)^k (\log p) (\log R)^{k-1} }{W^k}.   \qedhere  
    \end{multline}

\end{proof}


\section{Proofs of Propositions \ref{5.1}, \ref{5.2}, and \ref{5.3}}

\subsection{Proof of Proposition \ref{5.1}}

\begin{proof}

We decompose $\Sigma_0$ into $k$ sums according to which factor $L_j (n)$ of $\mathcal{P} (n)$ is divisible by $p$. We also notice that $(p,W)=1$. Therefore, for each $j=1,\dots , k$ we have to calculate

\begin{align}\label{3.1}
 \Sigma_0^{(j)} :&= \sum_{\substack {N < n \leq 2N \\ n \equiv \nu_0 \bmod W}}\left(  \sum_{\substack{ p| L_j (n)  \\ D_0 \leq p \leq R_0}} W_0 \left( \frac{\log p}{\log R} \right) \right) \left(   \sum_{\substack{ d_1, \dots , d_k \\ \forall i ~ d_i|L_i(n)}} \lambda_{d_1,\dots, d_k}  \right)^2 
 \\
 &= \sum_{\substack{ D_0 \leq p \leq R_0}} W_0 \left( \frac{\log p}{\log R} \right) 
   \sum_{\substack{ d_1, \dots , d_k \\ e_1, \dots , e_k }} \lambda_{d_1,\dots, d_k}  \lambda_{e_1,\dots ,e_k}  \sum_{\substack {N < n \leq 2N \\ n \equiv \nu_0 \bmod W \\  \forall i ~ [d_i,e_i]|L_i(n) \\  p| L_j(n) }} 1 \nonumber
\\
&= ~ \frac{N}{W} \sum_{\substack{ D_0 \leq p \leq R_0}} W_0 \left( \frac{\log p}{\log R} \right) 
  \sideset{}{'}\sum_{\substack{ d_1, \dots , d_k \\ e_1, \dots , e_k \\ }} 
  \frac{ \lambda_{d_1,\dots, d_k}  \lambda_{e_1,\dots ,e_k}  }{ [d_j,e_j,p]\prod_{i\not= j} [d_i,e_i] }  \nonumber
\\
&+ O \left( \sum_{\substack{ D_0 \leq p \leq R_0}}  \sideset{}{'}\sum_{\substack{ d_1, \dots , d_k \\ e_1, \dots , e_k \\ }}  |\lambda_{d_1,\dots, d_k}  \lambda_{e_1,\dots ,e_k}  | \right). \nonumber
 \end{align}  
We may add the $'$-condition without losing anything, because the sum equals 0 otherwise. It can be explained by the following short argument. If $p|[d_i,e_i], \, [d_j,e_j]$ for some $p,i,j$, then also $p|L_i (n), \, L_j (n)$. Given that, one may also deduce $p|A_iB_j-A_jB_i$, which implies $p<D_0$. Consequently, $(d_i,W)>1$, so the appropriate Selberg weight has to vanish.

  Let us estimate the error term of (\ref{3.1}) first. We notice that the value of the product $d_1 \dots  d_k e_1 \dots e_k$ is never greater than $R^2$. On the other hand, every value $r\leq R^2$ can be obtained in no more than $\tau_{2k}(r)$ different ways. Thus, by Lemma \ref{szacowanieSelbergow}, we get
  \begin{equation}\label{3.2}
   \sum_{\substack{ D_0 \leq p \leq R_0}}  \sideset{}{'}\sum_{\substack{ d_1, \dots , d_k \\ e_1, \dots , e_k \\ }}  |\lambda_{d_1,\dots, d_k}  \lambda_{e_1,\dots ,e_k}  | \ll F_{\max}^2 \log^{2k}N \sum_{r \leq R_0R^2} \tau_{2k} (r) \ll F_{\max}^2 \left( \log^{4k} N \right)  R_0R^2. 
  \end{equation}
  We can assume that $R_0R^2 \ll N \log^{-10k}N$ to obtain a satisfactory bound. 
  
 The main term from (\ref{3.1}) equals
  \begin{equation}\label{3.3}
   \frac{N}{W} \sum_{\substack{ D_0 \leq p \leq R_0}} W_0 \left( \frac{\log p}{\log R} \right) \frac{T_{1,\dots,1,p,1,\dots,1}}{p}.
  \end{equation}
   We decompose the sum above as follows:
   \begin{equation}
    \sum_{\substack{ D_0 \leq p \leq R_0}} = 
     \sum_{\substack{ D_0 \leq p \leq N^\epsilon}} +  \sum_{\substack{ N^\epsilon \leq p \leq R_0}}.
   \end{equation}
   By Lemma \ref{2.9} the contribution of the first sum to (\ref{3.3}) is 
\begin{equation}
\ll   \frac{ F^2_{\max} \varphi (W)^k N (\log R)^{k-1} }{W^{k+1}}  \sum_{\substack{ D_0 < p \leq N^\epsilon }} \frac{\log p}{p} \ll 
 \frac{ \epsilon F^2_{\max} \varphi (W)^k N (\log R)^{k} }{W^{k+1}}.
   \end{equation}
   By Lemma \ref{Ttilde} the contribution of the second sum to (\ref{3.3}) equals
\begin{equation}
   \frac{ \varphi (W)^k N (\log R)^{k} }{W^{k+1}}  \sum_{\substack{ N^{\epsilon} < p \leq R_0}} \frac{1}{p} W_0 \left( \frac{\log p}{\log R} \right)  I_0 \left( \frac{ \log p}{\log R} \right) +O \left(  \frac{ F_{\max }^2 \varphi (W)^k N (\log R)^{k}  |\log \epsilon | }{W^{k+1} D_0 }  \right),
  \end{equation}
where we estimated the error by the inequality
\begin{equation}
\sum_{N^\epsilon < p \leq R_0} \frac{1}{p} \ll \left| \log \epsilon \right| .
\end{equation}
We use summation by parts (for example \cite[Lemma 6.8]{MaynardK}) and get
\begin{equation}\label{3.5}
 \sum_{\substack{ N^\epsilon < p \leq R_0}} \frac{1}{p} W_0 \left( \frac{\log p}{\log R} \right)  I_0 \left( \frac{ \log p}{\log R} \right) 
 = \int \limits_{\epsilon/\vartheta}^{\vartheta_0/\vartheta} \frac{W_0 (u) I_0 (u)}{u} du + O \left( \frac{\log \log N}{\log N} \right).
\end{equation}
Combining (\ref{3.1})--(\ref{3.5}) and rescaling $\epsilon$, we get the result. 
\end{proof}

\subsection{Proof of Proposition \ref{5.2}} 

We use a Bombieri--Vinogradov style lemma for numbers with exactly $r$ prime factors weighted by $W_r$ which is a minor variation of \cite[Lemma 8.1]{MaynardK}.

\begin{lemma}\label{BV} Assume $GEH [ \theta ]$. Let $A \geq 1$, $\epsilon>0$, and $h>0$ be fixed. Let 
\[ {\beta}_r (n) = 
 \begin{cases}
        {W}_r \left(\frac{\log p_1}{\log n}, \dots , \frac{\log p_{r-1}}{\log n} \right) & \text{for } n=p_1\dots p_{r}, \mbox{with } \epsilon < p_1 \leq \dots \leq p_{r},\\
        0 & \text{otherwise}
 \end{cases} \]
 for some piecewise smooth function $W_r \colon [0,1]^{r-1} \rightarrow \mathbf{R}_{\geq 0}$. Put
 \begin{equation}
 \Delta_{\beta, r} (x; q) = \max_{y \leq x} \max_{\substack{a \\ (a,q)=1}} \left| \sum_{\substack{ y < n \leq 2y \\ n \equiv a \bmod q}} \beta_r (n) - \frac{1}{\varphi (q)}  \sum_{\substack{ y < n \leq 2y \\ (n,q)=1 }} \beta_r (n) \right|.
 \end{equation}
 Then, for $Q \ll x^{\theta} $ we have 
 \begin{equation}
 \sum_{q \leq Q} \mu (q)^2 h ^{\omega (q)} \Delta_{\beta , r} (x;q) \ll \left( \max_{[0,1]^{r-1}} |W_r| \right) x \log^{-A}x.
 \end{equation}
\end{lemma}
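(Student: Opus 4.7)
The plan is to derive the lemma from Theorem \ref{GEHwniosek} by recognising $\beta_r$ as a $\widetilde F$ to which that theorem applies, and then introducing the weight $\mu(q)^2 h^{\omega(q)}$ and the internal $\max_{y \leq x}$ by standard manoeuvres. On any $n = p_1\cdots p_r$ with $p_1 \leq \cdots \leq p_r$ one has $\sum_{i=1}^r \log p_i/\log n = 1$, so setting $F(t_1,\dots,t_r) := W_r(t_1,\dots,t_{r-1})$ on the simplex $\Delta_{r,\epsilon}$ makes $\beta_r$ equal to the function $\widetilde F$ of Theorem \ref{GEHwniosek}. In the intended application $W_r$ is supported on $\mathcal{A}_r$ (cf.\ Proposition \ref{5.2}), which enforces $x_1 > \epsilon$ and hence $p_1 \geq n^\epsilon \geq (x/2)^\epsilon$, matching the support hypothesis of the cited theorem. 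A standard dyadic decomposition of $y \in (0,x]$ combined with the union bound will then promote Theorem \ref{GEHwniosek} to handle the internal maximum over $y$, producing
\[
\sum_{q \leq Q}\max_{(a,q)=1}\Delta_{\beta,r}(x;q) \ll_{A'} x(\log x)^{-A'}
\]
for any fixed $A' \geq 1$ of our choosing.

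The $h^{\omega(q)}$ weight will then be introduced by a Cauchy--Schwarz argument. Since $\beta_r \ll 1$ and the number of integers $n \leq x$ in a residue class modulo $q$ is $\ll x/q + 1$, one has the trivial estimate $\Delta_{\beta,r}(x;q) \ll x(\log x)^B /\varphi(q)$ for some absolute $B$. By Cauchy--Schwarz,
\[
\sum_{q \leq Q}\mu(q)^2 h^{\omega(q)}\Delta_{\beta,r}(x;q) \leq \Big(\sum_{q \leq Q}\mu(q)^2 h^{2\omega(q)}\Delta_{\beta,r}(x;q)\Big)^{1/2}\Big(\sum_{q \leq Q}\Delta_{\beta,r}(x;q)\Big)^{1/2}.
\]
Inserting the trivial estimate into the first factor together with the Mertens-type bound $\sum_{q \leq x}\mu(q)^2 h^{2\omega(q)}/\varphi(q) \ll (\log x)^{h^2}$ gives $\ll (x(\log x)^{B+h^2})^{1/2}$; applying the previous display to the second factor with $A' = 2A + B + h^2 + 1$ gives $\ll (x(\log x)^{-A'})^{1/2}$. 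Multiplying yields the claimed bound $\ll x(\log x)^{-A}$.

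The only genuinely delicate point is matching the hypothesis $p_1 \geq x^\epsilon$ of Theorem \ref{GEHwniosek} to $\beta_r$; this is built in via the support of $W_r$ in our applications, so it poses no obstacle for the way the lemma is used here. In a fully general setting (no support restriction on $W_r$) one would need to treat the contribution of $n$ with an extremely small prime factor separately by an elementary sieve argument, but this is not required for the purposes of the paper.
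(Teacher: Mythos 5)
Your proposal is correct and matches the paper's proof, which states only ``Follows from Theorem~\ref{GEHwniosek} and the Cauchy--Schwarz inequality.'' You have correctly reconstructed the two ingredients: recognising $\beta_r$ as a $\widetilde F$ of Theorem~\ref{GEHwniosek} (and noticing the need for the support condition $x_1 > \epsilon$ on $W_r$, which indeed is supplied by $\mathcal{A}_r$ in the application), and using Cauchy--Schwarz together with the trivial bound $\Delta_{\beta,r}(x;q) \ll x/\varphi(q)$ and the Mertens-type estimate $\sum_{q\leq x}\mu(q)^2 h^{2\omega(q)}/\varphi(q)\ll(\log x)^{h^2}$ to absorb the weight $h^{\omega(q)}$.
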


\begin{proof} Follows from Theorem \ref{GEHwniosek}. The rest works exaclty the same way as in \cite[Lemma 8.1]{MaynardK}. 
\end{proof}

To prove the next lemma we proceed similarily to Thorne \cite{Thorne} with a few minor differences. The main disparity is that we are able to focus on the prime divisors of $L_i (n)$ directly, so we do not have to `search' for them among $[d,e]$ (see (4.3) in \cite{Thorne}). \\
\begin{lemma}
Assume $GEH[2 \vartheta ]$. Fix $\epsilon>0$. We require $W_r$ to be fixed and supported on 
\[ \mathcal{A}_r = \left\{ x \in [0,1]^{r-1} \colon \epsilon < x_1 < \dots < x_{r-1},~ \sum_{i=1}^{r-1} x_i < \min ( 1 -  \vartheta, 1 - x_{r-1} ) \right\}. \] 
For every $U \geq 1$ we have

\begin{multline*}
 \mathcal{S}_j = \sum_{\substack{ N < n \leq 2N \\ n \equiv \nu_0 \bmod W }} \beta_r \left( L_j (n) \right)  \left( \sum_{\substack{d_1, \dots , d_k \\ \forall i ~ d_i | L_i(n) }} \lambda_{d_1,\dots,d_k} \right)^2 \\
=  \frac{ N}{\varphi(W) \log N}   \sum_{\substack{ p_1, \dots ,p_{r-1} \\ N^\epsilon < p_1 < \dots < p_{r-1} \\ q<\min ( N/R, N/{p_{r-1}})  }} \frac{T^{(j)}_q}{q} 
  \alpha \left( \frac{\log p_1}{\log R} , \dots , \frac{\log p_{r-1}}{\log R} \right) \left( 1 + O\left( \frac{1}{\log N} \right) \right) 
+~  O_U\left(\frac{N}{ \log^{U} N} \right), 
\end{multline*}
  where $q$ is the product of primes appearing under the summation and 
\[ \alpha (x_1, \dots , x_{r-1}) = \frac{W_r (\vartheta x_1, \dots \vartheta x_{r-1})}{1 - \vartheta \sum_{i=1}^{r-1} x_i}. \]
\end{lemma}

\begin{proof}
Switching the order of summation, we get
\[ \mathcal{S}_j =     \sideset{}{'}\sum_{\substack{d_1, \dots , d_k \\  e_1, \dots,  e_k}} \lambda_{d_1,\dots,d_k}  \lambda_{e_1,\dots,e_k}
 \sum_{\substack{ N < n \leq 2N \\ n \equiv \nu_0 \bmod W \\ \forall i ~ [d_i,e_i] | L_i(n) }} \beta_r \left( L_j (n) \right). \]
Similarily to (\ref{3.1}), we may add the $'$-condition here freely. In the first step, we apply the trick devised by Thorne \cite{Thorne}, i.e. we decompose the inner sum 
according to how many prime factors of $ L_j(n)$ divide $[d_j, e_j]$:
\begin{equation}
 \mathcal{S}_j = S_0 + \dots + S_{r},
\end{equation}
where
\[ S_h =   \sideset{}{'}\sum_{\substack{d_1, \dots , d_k \\  e_1, \dots,  e_k}} \lambda_{d_1,\dots,d_k}  \lambda_{e_1,\dots,e_k}
\sum_{\substack{ p_1<\dots<p_{r-h} \\ q=[d_j, e_j]   }} \sum_{\substack{ N < n \leq 2N \\  n \equiv \nu_0 \bmod W \\ \forall i\not=j ~ [d_i,e_i] | L_i(n) \\ q|L_j (n) }} \beta_r \left( L_j (n) \right)   \]
We substitute $L_j (n) = m$. Observe that for any $i \not=j$ the condition $[d_i,e_i]|L_i (n)$ is equivalent to $m \equiv (A_i B_j - A_j B_i) A^{-1} \bmod [d_i,e_i]$, where $A^{-1}$ denotes the inverse element in the multiplicative residue class group (which in this case actually exists because $(d_ie_i,W)=1$). Moreover, $ (A_i B_j - A_jB_i)$ is also coprime to $d_ie_i$, so we can combine all these congruences into one: $m \equiv m_0 \bmod \prod_{i\not=j} [d_i,e_i]$, where $m_0$ is coprime to the modulus. We also have to transform the congruence $n \equiv \nu_0 \bmod W$. To do so, we split it into two: $n \equiv \nu_0 \bmod [A_j,W]/A_j$ and $n \equiv \nu_0 \bmod \mbox{rad} \,A_j$, where $\mbox{rad} \,A_j$ denotes the square-free part of $A_j$. The latter congruence is equivalent to $m \equiv A_j \nu_0 + B_j \bmod A_j \text{rad} \,A_j$ and we can see that  it is compatibile with $m \equiv B_j \bmod A_j$, which is forced by our substitution. This gives
\begin{equation}
S_h =   \sideset{}{'}\sum_{\substack{d_1, \dots , d_k \\  e_1, \dots,  e_k}} \lambda_{d_1,\dots,d_k}  \lambda_{e_1,\dots,e_k}
\sum_{\substack{ p_1<\dots<p_{r-h} \\ q=[d_j, e_j]   }} \sum_{\substack{ A_jN +B_j< m \leq 2A_jN+B_j \\ m \equiv m_0 \bmod   \prod_{i\not=j} [d_i,e_i] \\ q|m \\ m \equiv \mu_0 \bmod [A_j,W]/A_j \\ m \equiv A_j \nu_0 + B_j \bmod A_j \text{rad} \,A_j}} \beta_r \left( m \right),
\end{equation}
where $( \mu_0, [A_j,W]/A_j)=1$. We can simplify the situation further by substituting $m=qt$. The inner sum from the equation above satisfies 
\begin{equation}\label{3.10}
\sum_{\substack{ A_jN +B_j< m \leq 2A_jN+B_j \\ m \equiv m_0 \bmod  \prod_{i\not=j} [d_i,e_i] \\ q|m \\  m \equiv \mu_0 \bmod [A_j,W]/A_j  \\  m \equiv A_j \nu_0 + B_j \bmod A_j\text{rad} \,A_j }} \beta_r \left( m \right) = 
\sum_{\substack{ (A_jN +B_j)/q < t \leq (2A_jN+B_j)/q \\ t \equiv t_0 \bmod u}} \beta_r \left( qt \right)
\end{equation}
for some residue $t_0$ coprime to its modulus $u:=  [A_j,W] \,  \text{rad} \,A_j \prod_{i\not=j} [d_i,e_i] $. We also put $\beta_r (qt) = \widetilde{\beta}_h (t)$ for some function $\widetilde{\beta}_h$ that satisfies
\begin{equation}
 \widetilde{\beta}_h (t) = 
 \begin{cases}
        \widetilde{W}_h \left(\frac{\log p_1}{\log t}, \dots , \frac{\log p_{h-1}}{\log t} \right) & \text{for } t=p_1\cdots p_{h}, \mbox{with } \epsilon < p_1 \leq \dots \leq p_{h},\\
        0 & \text{otherwise}
 \end{cases} 
\end{equation}
for some piecewise smooth function $\widetilde{W}_h \colon [0,1]^{h-1} \longrightarrow \mathbf{R}_{\geq 0}$. It can be done because the largest prime divisor of $qt$ does not divide $q$ as long as it is greater than $R$. This is implied by the fact that $W_r$ is supported on $(x_1, \dots , x_{r-1})$ satisfying $\sum_{i=1}^{r-1} x_i < 1 - \vartheta$ together with $n>N$. Notice that $\widetilde{\beta}_h$ is still depending on $q$, so we have to be careful. The sum on the left-hand of the equation (\ref{3.10}) has the form required by Lemma \ref{BV}. We have
\begin{multline}
\sum_{\substack{ (A_jN +B_j)/q < t \leq (2A_jN+B_j)/q \\ t \equiv t_0 \bmod  u }} \beta_r \left( qt \right) ~=~
\sum_{\substack{ A_jN/q < t \leq 2A_jN/q \\ t \equiv t_0 \bmod u }} \widetilde{\beta}_h \left( t \right) +O (1) \\
= ~ \frac{1}{\varphi([A_j,W] \text{rad} \,A_j) \prod_{i\not=j} \varphi ([d_i,e_i])}  \sum_{\substack{ A_jN/q < t \leq 2A_jN/q }}  \widetilde{\beta}_h \left( t \right) + O \left( \Delta_{\widetilde{\beta},h} \left( A_j N ;  u \right) \,+1 \right).
\end{multline}

We write 
\begin{equation}
S_h =  M_h + E_h,
\end{equation}
where 
\begin{equation}
 M_h = \frac{1}{\varphi([A_j,W] \text{rad} \,A_j)} \sideset{}{'}\sum_{\substack{d_1, \dots , d_k \\  e_1, \dots,  e_k}}
 \frac{ \lambda_{d_1,\dots,d_k}  \lambda_{e_1,\dots,e_k}}{ \prod_{i\not=j} \varphi ([d_i,e_i])} \sum_{\substack{ p_1<\dots<p_{r-h} \\ q=[d_j, e_j]   }}   \sum_{\substack{ A_jN/q < t \leq 2A_jN/q }}  \widetilde{\beta}_h \left( t \right).
\end{equation}
By Lemma \ref{BV} we may obtain
\begin{align}\label{E_h_error}
E_h &\ll
\sideset{}{'}\sum_{\substack{d_1, \dots , d_k \\  e_1, \dots,  e_k}}
|  \lambda_{d_1,\dots,d_k}  \lambda_{e_1,\dots,e_k} | \sum_{\substack{ p_1<\dots<p_{r-h} \\ q=[d_j, e_j]   }}  
 \left( \Delta_{\widetilde{\beta},h} \left( A_j N ;  u \right)  +O(1)  \right)\nonumber \\
&\ll F_{\max}^2 \left( \log^{2k}R  \right)
\sum_{d_j, e_j}
 \sum_{\substack{ p_1<\dots<p_{r-h} \\ q=[d_j, e_j]   }}  
\,  \sideset{}{'}\sum_{\substack{d_1, \dots ,d_{j-1}, d_{j+1}, \dots d_k \\  e_1,\dots ,e_{j-1}, e_{j+1}, \dots  e_k \\ [d_1,e_1] \cdots [d_k,e_k] \leq R^2}}
 \left( \Delta_{\widetilde{\beta},h} \left( A_j N ;  u \right)  +O(1)  \right) \\
 &\ll_U  F_{\max}^2 N \left( \log^{-U+O(1)}N \right) \sum_{d,e} 
  \frac{1}{[d,e]} \nonumber \\
 &\ll F_{\max}^2 N \log^{-U+O(1)}N. \nonumber
\end{align}
for any $U \geq 1$. After rescaling $U$ here a bit we are done.

Now, we concentrate on the main term of $S_h$. It is convenient to put back $\widetilde{\beta}_h (t) = \beta_r (qt)$. We find

\begin{multline}\label{qq'}
\sum_{\substack{ p_1<\dots<p_{r-h} \\ q=[d_j, e_j]   }}   \sum_{\substack{ A_jN/q < t \leq 2A_jN/q }}  \beta_r (qt) = 
\sum_{\substack{ p_1<\dots<p_{r-h} \\ q=[d_j, e_j]   }}   \sum_{\substack{ A_jN/q < t \leq 2A_jN/q }}  
\sum_{\substack{ p_1'<\dots < p_h' \\ q'=t } } \beta_r (qt) \\
= \sum_{\substack{ p_1<\dots<p_{r-h} \\ q=[d_j, e_j]   }}    
\sum_{\substack{ p_1'<\dots < p_h' } }  \beta_r (qq') 
\mathbf{1}_{A_jN/q < q' \leq 2A_jN/q},  
\end{multline}
where $q'=\prod_{i=1}^h p_i'$. We wish to extract the greatest prime divisor of $qq'$ from the summations. We remember that it has to be larger than $R$, so the only possibility is $p'_h$. We set this index apart and name it $p$. Therefore, we put $q''=q'/p$ and the expression from (\ref{qq'}) equals
\begin{multline}
 \sum_{\substack{ p_1<\dots<p_{r-h} \\ q=[d_j, e_j]   }}    
\sum_{\substack{ p_1'<\dots < p_{h-1}' } }  
\sum_{p> \max(p_{h-1}',R) } \beta_r (qq''p)  \mathbf{1}_{A_jN/qq'' < p \leq 2A_jN/qq''} \\
=   \sum_{\substack{ p_1<\dots<p_{r-1} \\ [d_j, e_j]|q'''\\ \Omega ([d_i,e_i])=r-h  }}    
\sum_{p } W_r \left( \frac{\log p_1}{\log pq'''} , \dots , \frac{\log p_{r-1}}{\log pq'''} \right)  \mathbf{1}_{A_jN/q''' < p \leq 2A_jN/q'''},
\end{multline}
where $q''' = \prod_{i=1}^{r-1} p_i$. The condition $p>\max (p_{h-1}',R)$ could be erased due to the definition of $W_r$. Notice that the 'old' collection of $p_1, \dots , p_{r-h}$ is not necessarily equivalent to the 'new' collection -- there is a possibility that all of the $p_i$ and $p_i'$ were mixed with each other. From now on, we relabel $q'''$ as $q$ for the sake of convenience. We conclude that $M_0 + \dots + M_r$ equals
\begin{multline}\label{wniosek_prop5.2}
 \frac{1}{\varphi([A_j,W] \text{rad} \,A_j)} \sideset{}{'}\sum_{\substack{d_1, \dots , d_k \\  e_1, \dots,  e_k}}
 \frac{ \lambda_{d_1,\dots,d_k}  \lambda_{e_1,\dots,e_k}}{ \prod_{i\not=j} \varphi ([d_i,e_i])}
\sum_{\substack{ p_1<\dots<p_{r-1} \\ [d_j, e_j]|q   }}    
\sum_{ A_jN/q < p \leq 2A_jN/q } W_r \left( \frac{\log p_1}{\log pq} , \dots , \frac{\log p_{r-1}}{\log pq} \right)  \\
= \frac{1}{\varphi([A_j,W] \text{rad} \,A_j)}
  \sum_{\substack{ p_1<\dots<p_{r-1}   }}    T^{(j)}_q
\sum_{ A_jN/q < p \leq 2A_jN/q } W_r \left( \frac{\log p_1}{\log pq} , \dots , \frac{\log p_{r-1}}{\log pq} \right). 
\end{multline}
Combining  (\ref{E_h_error}), (\ref{wniosek_prop5.2}) with $(8.13)$ and $(8.14)$ from \cite{MaynardK}, we obtain for any $U \geq 1$
\begin{multline}
 \sum_{\substack{ N < n \leq 2N \\ n \equiv \nu_0 \bmod W}} \beta_r \left( L_j (n) \right)  \left( \sum_{\substack{d_1, \dots , d_k \\ \forall i ~ d_i | L_i(n) }} \lambda_{d_1,\dots,d_k} \right)^2  \\
= ~ \frac{ N}{\varphi(W) \log N}  \sum_{\substack{ p_1<\dots<p_{r-1}}} \frac{T^{(j)}_q}{q} 
  \alpha \left( \frac{\log p_1}{\log R} , \dots , \frac{\log p_{r-1}}{\log R} \right) \left( 1 + O\left( \frac{1}{\log N} \right) \right) + O_U(N \log^{-U} N),
\end{multline}
  where we noticed that
\begin{equation}
\frac{A_j}{\varphi([A_j,W] \text{rad} \,A_j)} = \frac{1}{\varphi (W)}.
\qedhere
\end{equation}

\end{proof}
 To finish the proof of Proposition \ref{5.2} we have to perform exactly the same reasoning which appears in the proof of Lemma 8.3 from \cite{MaynardK}. The only difference is the error term coming from transforming $T_q^{(j)}$ into the integral as in Lemma \ref{calkaI1} which, by the fact that $\alpha \ll 1$, can be easily calculated as follows:
  \begin{equation}
 \ll \frac{ F_{\max}^2 \varphi (W)^{k} N (\log R)^{k+1} }{W^{k+1} (\log N) D_0}   \sum_{R^\epsilon < p_1 < \dots < p_{r-1} < R } \frac{1}{q} \ll   \frac{ F_{\max}^2 \varphi (W)^{k} N (\log R)^{k} |\log \epsilon |^{r-1}}{W^{k+1}  D_0}
  \end{equation}
   where we added the restriction $R^\epsilon < p_1$ under the sum because of the support of $W_r$.

\subsection{Proof of Proposition \ref{5.3}}

\begin{proof}
The proof is the same regardless of whether we assume Hypothesis 1 or Hypothesis 2, so we describe only the first case. 
Define $A:= 2\max (A_1, \dots , A_k)$. Recall that $n \equiv \nu_0 \bmod W$ implies $(\mathcal{P}(n), W)=1$. We wish to estimate from above the following expression:

\begin{equation}\label{propozycja5.3}
\sum_{D_0 \leq p< AN^{1/2}} \sum_{\substack {N < n \leq 2N \\ n \equiv \nu_0 \bmod W \\ p^2|  L_j (n) }} \left(   \sum_{\substack{ d_1, \dots , d_k \\ \forall i ~ d_i|L_i(n)}} \lambda_{d_1,\dots, d_k}  \right)^2.
\end{equation}
We split the outer sum as follows:
\begin{equation}\label{3.23}
 \sum_{D_0 \leq p< AN^{1/2}}  = \sum_{D_0 \leq p< N^\eta}  +  \sum_{N^\eta \leq p< AN^{1/2}}. 
 \end{equation}
To calculate the second sum from (\ref{3.23}) we apply Lemma \ref{szacowanieSelbergow} and the divisor bound $\tau (n) \ll n^{o(1)} $ to obtain
\begin{align}
\sum_{N^\eta \leq p< AN^{1/2}} \sum_{\substack {N < n \leq 2N \\ n \equiv \nu_0 \bmod W \\ p^2|  L_j (n) }} \left(   \sum_{\substack{ d_1, \dots , d_k \\ \forall i ~ d_i|L_i(n)}} \lambda_{d_1,\dots, d_k}  \right)^2 \ll& ~ N^{o(1)}  \sum_{N^\eta \leq p< AN^{1/2}} \sum_{\substack {N < n \leq 2N \\ n \equiv \nu_0 \bmod W \\ p^2|  L_j (n) }} 1 \\
\ll& ~    N^{1+o(1)}  \sum_{p \geq N^{\eta}} \frac{1}{p^2} \ll N^{1-\eta + o(1)}.
\end{align}
In the case of the first sum from (\ref{3.23}) we can rearrange the order of summation and get
\begin{equation}\label{tutaj!}
\sum_{D_0 \leq p< N^\eta} 
  \sideset{}{'}\sum_{\substack{ d_1, \dots , d_k \\ e_1, \dots , e_k}} 
  \lambda_{d_1,\dots, d_k} \lambda_{e_1,\dots, e_k}  \sum_{\substack {N < n \leq 2N \\ n \equiv \nu_0 \bmod W \\ p^2|L_j (n)  \\ \forall i ~ [d_i,e_i] |L_i(n)}} 1.
  \end{equation}
As before in similar cases, we may add the $'$-condition freely, because the sum equals 0 otherwise.
 By Chinese remainder theorem the last summand equals
 \begin{equation}
 \frac{N}{W [d_j,e_j,p^2] \prod_{i \not= j}^k [d_i,e_i]} + O(1).
 \end{equation}
 Therefore, the expression from (\ref{tutaj!}) is
 \begin{equation}
  \frac{N}{W}\sum_{D_0 \leq p< N^\eta}  \sideset{}{'}\sum_{\substack{ d_1, \dots , d_k \\ e_1, \dots , e_k}}  \frac{  \lambda_{d_1,\dots, d_k} \lambda_{e_1,\dots, e_k}}{ [d_j,e_j,p^2] \prod_{i \not= j}^k [d_i,e_i]} + O\left( 
      \sum_{D_0 \leq p< N^\eta} 
   \sideset{}{'}\sum_{\substack{ d_1, \dots , d_k \\ e_1, \dots , e_k}} | \lambda_{d_1,\dots, d_k} \lambda_{e_1,\dots, e_k} | \right  ).
 \end{equation}
In order to calculate the error term we again apply Lemma \ref{szacowanieSelbergow} to estimate the Selberg weights. We also see that the product $d_1\dots d_ke_1 \dots e_k$ can take only values lower than $R^2$, so we obtain 
\begin{equation}
      \sum_{D_0 \leq p< N^\eta} 
   \sideset{}{'}\sum_{\substack{ d_1, \dots , d_k \\ e_1, \dots , e_k}} | \lambda_{d_1,\dots, d_k} \lambda_{e_1,\dots, e_k} | \ll
       y^2_{\max} \sum_{r < R^2N^\eta} \tau_{2k+1} (r) \ll   F^2_{\max} N^{\eta + o(1) }R^2 ,
\end{equation}
which is neglible for $2 \vartheta < 1- \eta$.

The main term equals
\begin{equation}
  \frac{N}{W}\sum_{D_0 \leq p< N^\eta}  \frac{T_{1, \dots , 1, p , 1, \dots , 1}}{p^2},
\end{equation}
where $p$ appears on the $j$-th coordinate. From Lemma \ref{Ttilde} we get
\begin{equation}\label{3.29}
\begin{gathered}
 \widetilde{T}_{1, \dots , 1, p , 1, \dots , 1} \ll  
  \frac{ \varphi (W)^k \log^k R}{W^k} 
 \int \limits_0^1 \dots \int \limits_0^1 G(t_1, \dots , t_k)\, dt_1 \, \dots \, dt_k 
 ~+ O \left( \frac{ F_{\max }^2 \varphi (W)^k (\log R)^{k} }{W^k D_0} \right)
 \end{gathered}
\end{equation}
 where
 \begin{equation}\label{3.30}
 G(t_1, \dots, t_k) = \left( F(t_1, \dots , t_k) - F (t_1, \dots , t_{j-1} , 
t_j + \frac{\log p}{\log R} , t_{j+1}, \dots , t_k ) \right)^2.
 \end{equation}
 Note that
 \begin{equation}\label{3.31}
G(t_1, \dots , t_k) \ll \frac{F_{\max}^2 \log^2 p}{\log^2 R}
 \end{equation}
for $t_j \leq 1- \frac{\log p}{ \log R}$, since 
 \begin{equation}\label{3.32}
 \frac{ \partial F}{\partial t_j} \ll F_{\max}.
 \end{equation}
For $t_j \geq  1- \frac{\log p}{ \log R}$ we obtain simply
\begin{equation}\label{3.33}
G(t_1, \dots , t_k) \ll F_{\max}^2.
\end{equation}
Combining (\ref{3.29})--(\ref{3.33}), we get
\begin{equation}
\widetilde{T}_{1, \dots , 1, p , 1, \dots , 1} \ll   \frac{ \varphi (W)^k (\log R)^{k}  }{W^k} 
 \left(  \frac{F_{\max}^2 \log^2 p}{\log^2 R} +  \frac{F_{\max}^2 \log p}{\log R} \right) \ll  
   \frac{ \varphi (W)^k \log^{k-1} R \log p}{W^k} .
\end{equation}
Thus,
\begin{multline}
\frac{N}{W}\sum_{D_0 \leq p< N^\eta} \frac{T_{1, \dots , 1, p , 1, \dots , 1}}{p^2} = 
\frac{N}{W}\sum_{D_0 \leq p< N^\eta} \frac{\widetilde{T}_{1, \dots , 1, p , 1, \dots , 1}}{p^2}  \\
+~ O \left( \frac{ F^2_{\max}\varphi (W)^k N (\log R)^k }{ W^{k+1} D_0 }
\sum_{D_0 \leq p< N^\eta} \frac{1}{p^2} \right).
\end{multline}
To sum up, 
\begin{multline}
\sum_{D_0 \leq p< N^\eta} \sum_{\substack {N < n \leq 2N \\ n \equiv \nu_0 \bmod W \\ p^2|  L_j (n) }} \left(   \sum_{\substack{ d_1, \dots , d_k \\ \forall i ~ d_i|L_i(n)}} \lambda_{d_1,\dots, d_k}  \right)^2   \\
\ll \frac{ F_{\max}^2  \varphi (W)^k N \log^{k-1} R}{W^{k+1}}\left(   \sum_{D_0 \leq p \leq N^\eta} \frac{\log p}{p^2}  + \frac{\log R}{D_0} \right) \ll \frac{ F_{\max}^2 \varphi (W)^k N (\log R)^{k} }{W^{k+1}D_0}.
\qedhere
\end{multline}

\end{proof}


\section{Proof of the main theorem}

\subsection{Setup}

We apply Propositions \ref{5.1}, \ref{5.2}, \ref{5.3} and \ref{5.4} to estimate $\mathcal{S}$ from below. We assume Hypothesis \ref{A} or Hypothesis \ref{B}. Fix $\epsilon > 0$ and put
\begin{equation}\label{4.1}
W_0 (x) =   1- \frac{\vartheta}{\vartheta_0}x ,
\end{equation}

\begin{equation}
W_{r,s} (x_1 , \dots , x_{r-1}) = \end{equation}
\[ \begin{cases}
       \frac{1}{\vartheta_0} - s - \frac{1}{\vartheta_0}\sum_{i=1}^{r-s} x_i,  & \text{if } \epsilon<x_1<\dots<x_{r-s} \leq \vartheta_0 < x_{r-s+1} < \dots < x_{r-1} \text{ and }   \\
& ~~   \sum_{i=1}^{r-1} x_i < 1- x_{r-1} , \\
        0 & \text{otherwise}
        \end{cases}
\]
for any $r,s \in \mathbf{N}$. 

By Proposition \ref{5.3} we have
\begin{multline}\label{4.4}
\mathcal{S}' \leq 
 \sum_{\substack {N < n \leq 2N \\ n \equiv \nu_0 \bmod W \\ \mu \left( \mathcal{P}(n) \right)^2 \not=1 }} \left( \sigma + \frac{\log \mathcal{P}(n)}{\log R_0}\right) \Lambda_{\text{Sel}}^2 (n)  \ll 
  \sum_{\substack {N < n \leq 2N \\ n \equiv \nu_0 \bmod W \\ \mu \left( \mathcal{P}(n) \right)^2 \not=1 }} \Lambda_{\text{Sel}}^2 (n)
\ll  \frac{ F_{\max}^2 \varphi (W)^k N (\log R)^{k}  }{W^{k+1}D_0}. 
\end{multline}

Proposition \ref{5.1} gives us 
\begin{equation}\label{4.5}
T_0 =  \frac{\varphi(W)^k N (\log R)^{k}  }{W^{k+1}} \sum_{j=1}^k J_0^{(j)}  + O\left( \frac{F_{\max}^2 \varphi(W)^k N (\log R)^{k} ( \epsilon + \frac{|\log \epsilon |}{D_0} ) }{W^{k+1} } \right).
\end{equation}

We apply Proposition \ref{5.2} with $\beta_r (n)=\chi_{r,s} (n)$, where $\chi_{r,s} (n)$ is defined as in (\ref{1.31}) and where $J_r$ is relabeled as $J_{r,s}$. We get
\begin{equation}\label{4.6}
 T_{r,s}^{(j)} =   \frac{\varphi(W)^k N (\log R)^{k+1} }{W^{k+1} \log N} J_{r,s}^{(j)} + O_\epsilon \left(\frac{F_{\max}^2 \varphi(W)^k N (\log R)^{k}  }{W^{k+1} D_0} \right).
 \end{equation}
 
 Proposition \ref{5.4} gives
 \begin{equation}\label{4.7}
 S_0 = 
  \frac{\varphi(W)^k N (\log R)^{k} }{W^{k+1}} J + O\left( \frac{F^2_{\max} \varphi(W)^k N (\log R)^{k} }{W^{k+1}D_0} \right).
 \end{equation}
 
 Combining (\ref{4.4}), (\ref{4.5}), (\ref{4.6}), and (\ref{4.7}) we obtain
 \begin{multline}
 \mathcal{S} \geq  \frac{\varphi(W)^k N (\log R)^{k} }{W^{k+1}}\left(  \sigma J - \sum_{j=1}^kJ_0^{(j)} + \vartheta \sum_{j=1}^k  \sum_{r=1}^h \sum_{s=1}^r J_{r,s}^{(j)}
  \right) - \\
  O \left(  \frac{\epsilon F_{\max}^2 \varphi(W)^k N (\log R)^{k} }{W^{k+1}} \right) -  O_\epsilon \left(\frac{F_{\max}^2 \varphi(W)^k N (\log R)^{k}  }{W^{k+1} D_0} \right).
  \end{multline}
 From this we can see that if
 \begin{equation}
   \sigma = \frac{ \sum_{j=1}^kJ_0^{(j)} - \vartheta \sum_{j=1}^k  \sum_{r=1}^h \sum_{s=1}^r J_{r,s}^{(j)} }{J} + \epsilon C_F ,
 \end{equation}
where $C_F$ is a sufficiently large constant depending only on $F$, then $\mathcal{S}>0$. We put
 \begin{equation}
 \widetilde{\Upsilon}_k (F; \vartheta, \vartheta_0, \epsilon )=
  \frac{ \sum_{j=1}^kJ_0^{(j)} - \vartheta \sum_{j=1}^k  \sum_{r=1}^h \sum_{s=1}^r J_{r,s}^{(j)} }{J} + \frac{k}{\vartheta_0}.
 \end{equation}
 Therefore, from (\ref{1.22}) and (\ref{1.26}) we have
 \begin{equation}\label{4.10}
 \Omega ( \mathcal{P} (n) ) \leq  \widetilde{\Upsilon}_k (F; \vartheta, \vartheta_0, \epsilon ) + O_F(\epsilon)
 \end{equation}
 infinitely often. We also note that $ \widetilde{\Upsilon}_k$ is actually well defined for every $\vartheta \in [0, \frac{1}{2}]$, $\vartheta_0 \in [ 0, 1]$, $\epsilon \in [0,1]$. We put
\begin{equation} 
\begin{split}
\widetilde{\Omega}_k ( \vartheta, \vartheta_0, \epsilon ) = \inf_{} \{ \widetilde{\Upsilon}_k (F ; \vartheta, \vartheta_0, \epsilon ) : F \mbox{ is smooth and supported on } \mathcal{R}_k \}, \\
 \widetilde{\Omega}_k^{\text{ext}} (\vartheta, \vartheta_0, \epsilon) = \inf_{} \{ \widetilde{\Upsilon}_k (F; \vartheta, \vartheta_0, \epsilon) : F \mbox{ is smooth and supported on } \mathcal{R}'_k \}.
\end{split}
\end{equation}
We wish to find good upper bounds for $\widetilde{\Omega}_k$ and $\widetilde{\Omega}_k^{\text{ext}}$ by suitable choices of functions $F$. We restrict our attention to multivariate polynomials. Moreover, if our polynomial is symmetric, then the main term from (\ref{4.10}) simplifies to
\begin{equation}\label{4.12}
 \frac{k(J_0 - \vartheta   \sum_{r=1}^h \sum_{s=1}^r J_{r,s} )}{J} + \frac{k}{\vartheta_0},
\end{equation}
where $J_0 = J_0^{(1)}$ and $J_{r,s} = J_{r,s}^{(1)}$ for every $r,s \in \mathbf{N}$. Observe that $\widetilde{\Upsilon}_k$ is continuous and differentiable with respect to each variable. This implies 
\begin{equation}\label{Oeps}
\begin{split}
\widetilde{\Upsilon}_k (F ; \vartheta, \vartheta_0, \epsilon ) &= \widetilde{\Upsilon}_k (F ; \vartheta, \vartheta_0, 0 ) + O_F (\epsilon).
 \end{split}
\end{equation}
We usually take $\epsilon$ very close to $0$, so (\ref{Oeps}) motivates the following definitions. 
\begin{equation}\label{continuity}
\begin{split}
 \Upsilon_k (F ; \vartheta, \vartheta_0 ) &=\widetilde{\Upsilon}_k (F ; \vartheta, \vartheta_0, 0 ) , \\
 \Omega_k^{\text{ext}} (\vartheta, \vartheta_0 ) &=   \widetilde{\Omega}_k^{\text{ext}} (\vartheta, \vartheta_0, 0 ), \\ 
 \Omega_k  (\vartheta, \vartheta_0 ) &=\widetilde{\Omega}_k  (\vartheta, \vartheta_0, 0).
\end{split}
\end{equation}
Therefore, we can summarize the above discussion in the following result. 
\begin{theorem}\label{tw_J} Let $k$ be an integer greater than or equal to $3$. Let also $\epsilon >0$, and $\mathcal{H}$ be an admissible $k$--tuple. Then, we have
\begin{enumerate}
\item $\Omega ( \mathcal{P} (n) ) \leqslant {\Omega}_k + O(\epsilon) $ infinitely often, if $\vartheta$ and $\vartheta_0$ satisfy Hypothesis $\ref{A}$;
\item $\Omega ( \mathcal{P} (n) ) \leqslant {\Omega}_k^{\text{\normalfont{ext}}} + O(\epsilon) $ infinitely often, if $\vartheta$ and $\vartheta_0$ satisfy Hypothesis $\ref{B}$.
\end{enumerate}
\end{theorem}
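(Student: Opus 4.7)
The plan is to chain together the four Propositions \ref{5.1}--\ref{5.4} in the decomposition
\[ \mathcal{S} \geq \sigma S_0 - S' - T_0 + \sum_{j=1}^k \sum_{r=1}^h \sum_{s=1}^r T_{r,s}^{(j)} \]
of $\mathcal{S}(\sigma;N,R_0,R,\mathcal{H},\nu_0)$ from (\ref{1.22}), and then use positivity of $\mathcal{S}$ to force the existence of $n$ in the range $(N,2N]$ with $\Omega(\mathcal{P}(n))$ bounded by a quantity we can optimise in $F$. Most of the work is already distilled in the ``Setup'' subsection; the present proof just packages it.

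First I would fix $F$ smooth, symmetric, supported on $\mathcal{R}_k$ (case (1)) or $\mathcal{R}_k'$ (case (2)), and choose the auxiliary weights $W_0$ and $W_{r,s}$ as in (\ref{4.1})--(\ref{4.4}) so that the identity (\ref{1.11}) reproduces the prime-counting weight in (\ref{1.26}). Under Hypothesis \ref{A} or \ref{B} the conditions of Propositions \ref{5.1}--\ref{5.4} are satisfied (the $GEH[2\vartheta]$ assumption is what feeds Proposition \ref{5.2}; the support condition together with $\vartheta_0+2\vartheta<1$ or $\vartheta_0+\tfrac{2k}{k-1}\vartheta<1$ feeds Proposition \ref{5.1}; and $\vartheta<\tfrac12-\eta$ or $\vartheta<\tfrac{k-1}{2k}-\eta$ feeds Propositions \ref{5.3} and \ref{5.4}). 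Substituting these asymptotics into the decomposition yields, exactly as in (\ref{4.4})--(\ref{4.7}), the lower bound
\[
\mathcal{S} \geq \frac{\varphi(W)^k N (\log R)^{k}}{W^{k+1}}\!\left(\sigma J - \sum_{j=1}^k J_0^{(j)} + \vartheta \sum_{j=1}^k \sum_{r=1}^h \sum_{s=1}^r J_{r,s}^{(j)}\right) - O_F\!\left(\frac{\varphi(W)^k N (\log R)^{k}}{W^{k+1}}\Big(\epsilon+\tfrac{1}{D_0}\Big)\right),
\]
where all implied constants depend only on $F$.

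Next I would choose
\[ \sigma \;=\; \frac{\sum_{j=1}^k J_0^{(j)} - \vartheta\sum_{j=1}^k\sum_{r=1}^h\sum_{s=1}^r J_{r,s}^{(j)}}{J} \;+\; C_F\,\epsilon \]
with $C_F$ taken large enough to absorb the $O_F(\epsilon)$ error and make the bracketed expression strictly positive; since $D_0\to\infty$ with $N$, the $1/D_0$ error is dominated for $N$ large. Strict positivity of $\mathcal{S}$, together with the nonnegativity of $\Lambda_{\mathrm{Sel}}^2$, forces the existence of at least one $n\in(N,2N]$ with $n\equiv\nu_0\pmod W$, $\mathcal{P}(n)$ square-free and $\Omega(\mathcal{P}(n))\leq \sigma$. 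By (\ref{1.26}) and the trivial bound $\log\mathcal{P}(n)\leq k\log(2A_{\max}N)$, this translates (after adding the $k/\vartheta_0$ contribution coming from $\log\mathcal{P}(n)/\log R_0$) into
\[ \Omega(\mathcal{P}(n)) \;\leq\; \widetilde{\Upsilon}_k(F;\vartheta,\vartheta_0,\epsilon) + O_F(\epsilon), \]
infinitely often. The $W$-trick is harmless because any admissible $\mathcal{H}$ admits a $\nu_0$ with $(\mathcal{P}(\nu_0),W)=1$, and all the constants depend only on the fixed data.

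Finally I would take the infimum over admissible smooth $F$, invoking the continuity relation (\ref{continuity}): $\widetilde{\Upsilon}_k(F;\vartheta,\vartheta_0,\epsilon) = \Upsilon_k(F;\vartheta,\vartheta_0) + O_F(\epsilon)$. Passing to the infimum over $F$ supported on $\mathcal{R}_k$ (resp.\ $\mathcal{R}_k'$) gives $\Omega_k(\vartheta,\vartheta_0)$ (resp.\ $\Omega_k^{\mathrm{ext}}(\vartheta,\vartheta_0)$), which yields the two assertions. The only subtle point is the order of quantifiers: because the $O_F(\epsilon)$ error depends on $F$, one must first fix a near-optimal $F$ and only then send $\epsilon\to 0$ (equivalently, require the final $\epsilon$ in the statement to be chosen after $F$). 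This is the single step that requires genuine care; everything else is a mechanical substitution of the propositions already proved in Section~3.
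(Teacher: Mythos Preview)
Your proposal is correct and follows essentially the same approach as the paper: the paper's own ``proof'' of this theorem is precisely the Setup subsection preceding the statement, which chains Propositions \ref{5.1}--\ref{5.4} into the decomposition of $\mathcal{S}$, chooses $\sigma$ as in your display to force $\mathcal{S}>0$, and then reads off the bound on $\Omega(\mathcal{P}(n))$ via (\ref{1.22}) and (\ref{1.26}). Your remark on the order of quantifiers (fix a near-optimal $F$ before sending $\epsilon\to 0$) is a valid clarification that the paper leaves implicit.
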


\subsection{The GEH case}

Let us begin this subsection with an observation concerning the integrals $J_0$ and $J_{r,s}$. The precise shape and the very existence of these integrals can be perceived as a consequence of the relations between (\ref{1.8}) and (\ref{4.10}).
\begin{align}\label{4.14}
\begin{split}
 \sum_{\substack{ p|n,~ p \leq R_0}} \left( 1 - \frac{\log p}{\log R_0} \right)~~ &\text{   transforms into   } ~~ J_0/J,\\ 
  \sum_{r=1}^\infty \sum_{s=1}^r  \chi_{r,s} (n) ~~&\text{   transforms into   } ~~ \left(  \vartheta \sum_{r=1}^\infty \sum_{s=1}^r  J_{r,s} \right) / J,\\
   \frac{\log n}{\log R_0} ~~&\text{   transforms into   } ~~ \frac{k}{\vartheta_0}.
\end{split}
\end{align} 
The $J$ integral is basically a normalising term. Notice that it is relatively easy to calculate ${\Omega}_k $ or ${\Omega}_k^{\text{ext}} $ upon $\vartheta_0=1$, because in such a case we would have $J_{r,s}=0$ for every permissible pair $r,s$. Thus, if $\vartheta_0=1$ and $F$ is symmetric, then (\ref{4.12}) reduces to
\begin{equation}\label{4.76}
 \frac{kJ_0 }{J} + \frac{k}{\vartheta_0},
\end{equation}
We can view $\sum_{r=1}^\infty \sum_{s=1}^r  \chi_{r,s} (n)$ as a correction term which appears to give back the contribution taken away by the $p \leq R_0$ restriction in the upper sum from (\ref{4.14}). Therefore, we can propose the following
\begin{conj}\label{CONJ} The function $\Omega_k (\vartheta, \vartheta_0)$ is constant with respect to $\vartheta_0$. The same applies to ${\Omega}_k^{\text{\normalfont{ext}}}$.
\end{conj}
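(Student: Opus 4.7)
My plan is to identify $\Upsilon_k(F;\vartheta,\vartheta_0)$ with a quantity that is manifestly independent of $\vartheta_0$. For fixed admissible $F$ and $\vartheta$, consider the Selberg-weighted average
\[ \bar{\Omega}(F;\vartheta) := \lim_{N \to \infty} \frac{ \sum_{\substack{N < n \leq 2N \\ n \equiv \nu_0 \bmod W}} \Omega(\mathcal{P}(n)) \Lambda_{\text{Sel}}^2(n) }{\sum_{\substack{N < n \leq 2N \\ n \equiv \nu_0 \bmod W}} \Lambda_{\text{Sel}}^2(n)}. \]
If this limit exists, it cannot depend on $\vartheta_0$: the sieve weights $\Lambda_{\text{Sel}}$ involve only $R=N^{\vartheta}$ and the function $F$, while $\Omega(\mathcal{P}(n))$ does not involve $\vartheta_0$ at all. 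The whole proof then reduces to proving the identity $\bar{\Omega}(F;\vartheta) = \Upsilon_k(F;\vartheta,\vartheta_0)$ for every admissible $\vartheta_0$; taking the infimum over $F$ supported in $\mathcal{R}_k$ (resp.\ $\mathcal{R}_k'$) will yield the statement for $\Omega_k$ (resp.\ $\Omega_k^{\mathrm{ext}}$).

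\textbf{Main argument.} To produce that identity I would apply (\ref{1.11}) with $y=R_0=N^{\vartheta_0}$ to each linear form $L_j(n)$ separately and sum over $j$, obtaining, for squarefree $\mathcal{P}(n)$,
\[ \Omega(\mathcal{P}(n)) = \sum_{j=1}^{k}\sum_{\substack{p \mid L_j(n)\\ p\leq R_0}}\left(1 - \frac{\log p}{\log R_0}\right) + \frac{\log \mathcal{P}(n)}{\log R_0} + \sum_{j=1}^{k}\sum_{r=1}^{\infty}\sum_{s=1}^{r}\chi_{r,s}(L_j(n)); \]
Proposition \ref{5.3} shows the non-squarefree contribution is negligible. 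Multiplying by $\Lambda_{\text{Sel}}^2(n)$, summing, and normalising by $S_0$, then evaluating each piece via Propositions \ref{5.1}, \ref{5.2}, \ref{5.4} (using $\log\mathcal{P}(n)/\log R_0 = k/\vartheta_0 + o(1)$) yields
\[ \bar{\Omega}(F;\vartheta) \;=\; \frac{\sum_{j=1}^{k} J_0^{(j)} - \vartheta\sum_{j=1}^{k}\sum_{r=1}^{\infty}\sum_{s=1}^{r} J_{r,s}^{(j)}}{J} + \frac{k}{\vartheta_0} \;=\; \Upsilon_k(F;\vartheta,\vartheta_0), \]
exactly matching the definition after passing $h\to\infty$.

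\textbf{Main obstacle.} The hard part is justifying the interchange of $\sum_{r=1}^{\infty}$ with the asymptotic limit in $N$. Propositions \ref{5.1}--\ref{5.4} supply asymptotics for each fixed $r$, but the error terms are not obviously summable, and the truncation at $h$ that was acceptable for the one-sided inequality $\mathcal{S}\geq\cdots$ is not acceptable for the equality we need here. The missing input is a uniform tail bound of the form
\[ \sum_{j=1}^{k}\sum_{r>h}\sum_{s=1}^{r} \bigl|T_{r,s}^{(j)}\bigr| = o(S_0) \quad \text{as } h\to\infty, \]
which amounts to a Selberg-weighted moment estimate showing that typical $L_j(n)$ do not acquire an unbounded number of prime factors. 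Producing a rigorous version of this appears to lie beyond the techniques currently available under $GEH$, and is presumably the reason why only numerical evidence (Table F) is offered.

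\textbf{Alternative.} A more local approach would be to differentiate $\Upsilon_k(F;\vartheta,\vartheta_0)$ directly in $\vartheta_0$ and prove $\partial_{\vartheta_0}\Upsilon_k \equiv 0$. Using (\ref{4.1}), one finds $\partial_{\vartheta_0}J_0^{(j)} = (\vartheta/\vartheta_0^{2})\int_{\epsilon}^{\vartheta_0/\vartheta} I_0^{(j)}(y)\,dy$ (the boundary term at $y=\vartheta_0/\vartheta$ vanishes because $W_0(\vartheta_0/\vartheta)=0$), while the $\vartheta_0$-dependence of $J_{r,s}^{(j)}$ enters through the support thresholds $x_{r-s}\leq\vartheta_0<x_{r-s+1}$ of $W_{r,s}$ together with the explicit $1/\vartheta_0$ factors in (4.2). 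One would need to verify that the resulting boundary contributions telescope so as to exactly cancel $\partial_{\vartheta_0}(k/\vartheta_0) = -k/\vartheta_0^{2}$ together with $\partial_{\vartheta_0}\sum_j J_0^{(j)}/J$. This sidesteps the analytic convergence issue but still traffics in an infinite sum in $r$, so the bookkeeping of support-boundary terms remains the main technical burden.
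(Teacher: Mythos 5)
This statement is labelled a \emph{Conjecture} in the paper, and the paper contains no proof of it. What it offers is (i) the informal interpretation around (\ref{4.14}), namely that $\sum_{r,s}\chi_{r,s}$ is a ``correction term which appears to give back the contribution taken away by the $p\leq R_0$ restriction,'' and (ii) the numerical evidence tabulated in Tables D, F, G. There is therefore no ``paper's own proof'' to compare against.

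Your proposal is not a proof either, and you say so yourself, but it is a faithful and correctly argued reconstruction of exactly the heuristic the paper relies on. The observation that $\bar{\Omega}(F;\vartheta)$, if it exists, is tautologically $\vartheta_0$-free, and that Propositions \ref{5.1}--\ref{5.4} applied termwise to the decomposition (\ref{1.11}) of $\Omega(L_j(n))$ would force $\Upsilon_k(F;\vartheta,\vartheta_0)=\bar{\Omega}(F;\vartheta)$, is the right way to articulate what the paper only gestures at. You have also correctly located the genuine gap: Propositions \ref{5.1}--\ref{5.4} give asymptotics only for \emph{fixed} $r$, so passing from the finite truncation $h$ (which only yields the one-sided inequality $\mathcal{S}\geq\cdots$) to the full series requires a uniform tail estimate of the form $\sum_{r>h}\sum_{s\leq r}\lvert T_{r,s}^{(j)}\rvert = o(S_0)$, and nothing in the paper supplies this. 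This is precisely why the statement remains a conjecture. One small additional remark: you also need the \emph{existence} of the limit defining $\bar{\Omega}(F;\vartheta)$, not merely its $\vartheta_0$-independence; without the tail bound this existence is itself unproven, so the tautology ``if it exists it is $\vartheta_0$-free'' does not yet buy anything.

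Your ``Alternative'' route (showing $\partial_{\vartheta_0}\Upsilon_k\equiv 0$ directly) is worth noting because it sidesteps $GEH$ entirely --- $\Upsilon_k$ is a purely analytic object defined by integrals, so its constancy in $\vartheta_0$ is in principle a question of calculus, not of primes in arithmetic progressions. Your computation of $\partial_{\vartheta_0}J_0^{(j)}$, including the vanishing of the boundary term at $y=\vartheta_0/\vartheta$ via $W_0(\vartheta_0/\vartheta)=0$, is correct. But as you point out, the telescoping of the boundary terms produced by differentiating the support thresholds $x_{r-s}\leq\vartheta_0<x_{r-s+1}$ in $W_{r,s}$ across all $(r,s)$ is again an infinite bookkeeping problem, so the same convergence and tail-control issue reappears in a different guise. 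In short: your account is a reasonable and honest elaboration of the paper's heuristic, correctly flags the missing ingredient, and does not claim more than it delivers --- consistent with the paper's decision to leave this as a conjecture supported by numerics rather than a theorem.
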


In the $GEH$ case of Theorem \ref{MAIN} we apply the first part of Theorem \ref{tw_J} with $\vartheta=\vartheta_0=1/3$. Conjecture \ref{CONJ} is probably  not exactly correct the way it is stated, but it has been very useful for choosing good (conjecturally near-optimal) polynomials $F$ for various $k$. In general, it is just much easier to minimise the expression (\ref{4.76}) rather than (\ref{4.12}). Our choices are listed in Table C.

\begin{center}
\centering
\text{Table C.}
\vspace{1mm}
\\
  \begin{tabular}{ | C{0.7cm} | C{6cm}  | C{6cm}  |  }
    \hline
    $k$ & $F_1$ & $F_2$ \\ \hline
    $3$ & $529 - 877 P_1 + 567 P_1^2 - 189 P_1^3$  & $1846 - 3225 P_1 + 2203 P_1^2 - 727 P_1^3 + 228 P_2 - 223 P_1P_2$\\ \hline
    $4$ & $17950 - 36681 P_1 + 28786 P_1^2 - 9510 P_1^3$  & $20875 - 43615 P_1 + 33273 P_1^2 - 10000 P_1^3 + 4867 P_2 - 4649 P_1P_2$\\ \hline
    $5$ & $15566 - 35617 P_1 + 30136 P_1^2 - 9807 P_1^3$  &  $17195 - 40385 P_1 + 33413 P_1^2 - 10000 P_1^3 + 5366 P_2 - 5148 P_1P_2$ \\  \hline
    $6$ &  $12739 - 31508 P_1 + 28087 P_1^2 - 9178 P_1^3$ & $11908 - 30242 P_1 + 26486 P_1^2 - 8071 P_1^3 + 4310 P_2 - 4162 P_1 P_2$  \\  \hline
    $7$ &  $11754 - 30703 P_1 + 28386 P_1^2 - 9354 P_1^3$ & $11091 - 29705 P_1 + 27075 P_1^2 - 8420 P_1^3 + 4322 P_2 - 4197 P_1 P_2$  \\  \hline 
    $8$ & $11131 - 30235 P_1 + 28687 P_1^2 - 9531 P_1^3$  & $10523 - 29241 P_1 + 27419 P_1^2 - 8679 P_1^3 + 4232 P_2 - 4128 P_1 P_2$  \\  \hline
    $9$ & $6710 - 18690 P_1 + 18003 P_1^2 - 6001 P_1^3$  &  $9528 - 27175 P_1 + 26009 P_1^2 - 8351 P_1^3 + 3857 P_2 - 3775 P_1 P_2$ \\  \hline
    $10$ & $6573 - 18606 P_1 + 18072 P_1^2 - 6024 P_1^3$  &  
  $  9726 - 1513 P_1 + 712 P_1^2 - P_1^3 + 4548 P_2 - 3828 P_1 P_2$ \\   
      \hline
  \end{tabular}
\end{center}
Recall that $P_j = \sum_{i=1}^k t_i^j$. The $F_1$ polynomials are chosen to be almost best from all polynomials of the form
 \[ a_0 + a_1(1-P_1) + a_2(1-P_1)^2 + a_3(1-P_1)^3\] 
 with $a_0,a_1,a_2,a_3>0$. One may notice that polynomials listed in the $F_1$ column resemble $(1-P_1)^3$ quite much, up to constant multipliers. On the other hand, we can also take into account all other possible terms of order greater than or equal to $3$ constructed from $(1-P_1)$, $P_2$ and find the almost optimal polynomials $F_2$ of the form 
 \[b_0 + b_1(1-P_1) + b_2(1-P_1)^2 + b_3(1-P_1)^3 + b_4P_2 + b_5(1-P_1)P_2\] 
 with $b_0,b_1,b_2,b_3,b_4,b_5>0$. We are able to calculate the upper bound on $\Upsilon (F_1; \frac{1}{3}, \frac{1}{3})$ thanks to the specific shape of $F_1$ -- as pointed out in Remark \ref{1dimrem} we are able to convert the integrals $J_0$ and the $J_{r,s}$ into simpler ones.
 
   Considerations mentioned above lead us to the results listed in Table D. In the first column we present the upper bound on $\Omega_k$ given by calculating the contributions from $J_0$, $J_{1,1}$, $J_{2,1}$, $J_{3,1}$, $J_{2,2}$, and $J_{3,2}$ into $\Upsilon (F_1)$. In the second column we extract also the contribution incoming from $J_{4,1}$, which can be considered as a black box due to its complicated shape. The third column contains the possible true values of $\Upsilon (F_1)$ predicted by Conjecture \ref{CONJ}. In the fourth column we have the same thing for $F_2$. As we can see in Table D, the differences between $\Upsilon (F_1)$ and $\Upsilon (F_2)$ are probably minuscule. As numerical experiments suggest, we are also unable to win much by considering higher powers of $(1-P_1)$, $P_2$ or even the negative $a_i$, $b_i$.

\begin{center}\label{wyniki}
\centering
\text{Table D.}
\vspace{1mm}
\\
  \begin{tabular}{ | C{0.7cm} |  C{3.2cm}  | C{3.2cm}  | C{3.2cm}  | C{3.2cm} | }
    \hline
       $k$ &  \footnotesize{ Upper bound on $\Omega_k $} & \footnotesize{Upper bound on $\Omega_k$  (contribution from $J_{4,1}$ included)} & \footnotesize{True value of  $\Upsilon_k (F_1)$  foreseen by Conjecture \ref{CONJ} } &  \footnotesize{True value of  $\Upsilon_k (F_2)$  foreseen by Conjecture \ref{CONJ} } \\ \hline
    $3$ &  7.530\dots& \bf{7.415\dots} & 7.38120\dots & 7.38096\dots \\ \hline
    $4$ &  10.750\dots &   \bf{10.523\dots} & 10.44612\dots & 10.44486\dots \\ \hline
    $5$ &  14.192\dots &  \bf{13.828\dots}  & 13.68862\dots & 13.68492\dots \\  \hline
    $6$ &  17.822\dots  & \bf{17.301\dots}    & 17.07933\dots  & 17.07180\dots  \\ \hline
    $7$ &  21.615\dots  &  \bf{20.921\dots}   &  20.59761\dots & 20.58499\dots \\ \hline
    $8$ &  25.550\dots  &  \bf{24.672\dots}   & 24.22809\dots  & 24.20929\dots \\ \hline
    $9$ & 29.614\dots   &  \bf{28.541\dots}   &  27.95889\dots  &  27.93302\dots \\ \hline
    $10$ & 33.795\dots  &  \bf{32.519\dots}  &  31.78061\dots  & 31.74691\dots \\
      \hline
  \end{tabular}
\end{center}
 By (\ref{Oeps}) we have
\begin{equation}
 \widetilde{\Upsilon}_k \left( F_1; \frac{1}{3} - \epsilon, \frac{1}{3}, \epsilon \right) = 
 \Upsilon_k \left( F_1; \frac{1}{3}, \frac{1}{3} \right) + O(\epsilon),
\end{equation}
The choice $\vartheta = 1/3 - \epsilon$ and $\vartheta_0 = 1/3$ satisfies all assertions of Hypothesis \ref{A} under $GEH[2/3]$, so fixing $\epsilon$ sufficiently close to $0$ enables us to use the results from Table D to prove the conditional part of Theorem \ref{MAIN}. 

As we can see, there should be a possibilty to improve the result for $k=9$ and $k=10$ by considering the contributions from integrals like $J_{5,1}$ or $J_{4,2}$ (some experiments with Monte Carlo method suggest that $J_{5,1}$ should contribute much more than $J_{4,2}$; unfortunately, calculating this expression is extremaly onerous). We also have no improvement for $k=3$ in view of what is already proven unconditionally (see \cite{3-tuples}).

There is a problem with using the full strength of $GEH$. Namely, we are limited by the inequality $\vartheta \leq \vartheta_0$. If $\vartheta > \vartheta_0$, then we cannot apply Proposition \ref{5.2} to our choices of $W_{r,s}$ (which are forced by the specific shape of functions $\chi_{r,s} (n)$). On the other hand, we can perform a little trick to overcome this issue. Take a look at the following inequality: 

\begin{equation}\label{4.16'}
\Omega (n) \geq \sum_{\substack{ p|n \\ p \leq y}} \left( 1 - \frac{\log p}{\log y} \right) + \frac{\log n}{\log y} + \sum_{r=1}^\infty \varsigma_r (n),
\end{equation}
where
\begin{equation}
 \varsigma_r (n) = 
\begin{cases}
- \left( \frac{\log n}{\log y} - 1 - \sum_{i=1}^{r-1} \frac{\log p_i}{\log y} \right), & \text{if~} n=p_1 \dots p_r \text{~with~ }  \\
& ~~  p_1 < \dots < p_{r-1} \leq y< p_r < n^{\vartheta},\\
\\
0, & \text{otherwise. }
\end{cases}
\end{equation}
Put 
\begin{equation}
\begin{gathered}
W^{\flat}_{r,s} (x_1 , \dots , x_{r-1}) = \\
 \begin{cases}
       \frac{1}{\vartheta_0} - s - \frac{1}{\vartheta_0}\sum_{i=1}^{r-s} x_i,  & \text{if } \epsilon<x_1<\dots<x_{r-s} \leq \vartheta_0 < x_{r-s+1} < \dots < x_{r-1} \text{ and }   \\
& ~~   \sum_{i=1}^{r-1} x_i < \min ( 1- x_{r-1}, 1-\vartheta ), \\
        0 & \text{otherwise}
        \end{cases}
\end{gathered}
\end{equation}
for all $r,s \in \mathbf{N}$ in the place of $W_{r,s}$. Now, we repeat the reasoning from (\ref{1.22})--(\ref{1.31}) with $\varsigma_r$ instead of $\chi_r$ and an analogous choice of $y$. Notice that we are allowed to apply Propositions \ref{5.1}--\ref{5.4} assuming only points $1,3,4,5$ from Hypothesis \ref{A}. Proceeding like in (\ref{4.1})--(\ref{4.10}) we conclude that if $F$ is a symmetric polynomial of $k$ variables, then we have
 \begin{equation}
 \Omega ( \mathcal{P} (n) ) \leq    \frac{k(J_0 - \vartheta   \sum_{r=1}^h \sum_{s=1}^r J^{\flat}_{r,s} )}{J} + \frac{k}{\vartheta_0} + O(\epsilon),
 \end{equation}
where the $J^\flat_{r,s}$ are the same as $J_{r,s}$ but with $W^{\flat}_{r,s}$ instead of $W_{r,s}$. Now, the condition ${\vartheta \leq \vartheta_0}$ can be discarded, so we can try to use $GEH[ \theta ]$ with an exponent greater than $2/3$, without harming the condition $\vartheta_0 + 2\vartheta < 1$. Unfortunately, this possibility has its price. Note that (\ref{4.16'}) is not an equality like $(1.11)$. That difference implies that some part of contribution given previously by $J_{r,s}$ disappears (it is not surprising because $\mbox{supp} \, (W^\flat_{r,s}) \subset \mbox{supp} \, (W_{r,s})$). If $\vartheta>\vartheta_0$, then the bigger the difference $\vartheta - \vartheta_0$ is, the stronger that phenomenon is going to be. Up to some point we are able to make some little progress over the results listed in Table D. Put $\vartheta_0 = 1 - 2\vartheta$. Define $\Upsilon^\flat_k$, $\widetilde{\Omega}^\flat_k$ and $\Omega^\flat_k$ the same way as their `non-$\flat$' analogues, but with $J^\flat_{r,s}$ instead of $J_{r,s}$. Taking polynomials $F_1$ from Table C and considering the contribution from $J^\flat_{1,1}$, $J^\flat_{2,1}$, $J^\flat_{3,1}$, $J^\flat_{4,1}$, $J^\flat_{2,2}$, $J^\flat_{3,2}$, $J^\flat_{3,3}$ integrals we get 

\begin{center}\label{wyniki}
\centering
\text{Table E.}
\vspace{1mm}
\\
  \begin{tabular}{ | C{0.7cm} | C{1.5cm} |  C{3cm}  |  }
    \hline
      $k$ &  $\vartheta$ &  \footnotesize{Upper bound on $\Omega^\flat_k$} \\ \hline
    $3$ & 0.371 & \bf{7.278\dots}   \\ \hline
    $4$ & 0.365 & \bf{10.389\dots} \\ \hline
    $5$ & 0.355 & \bf{13.704\dots} \\ \hline
    $6$ & 0.355 & \bf{17.184\dots} \\ \hline
    $7$ & 0.352 & \bf{20.817\dots} \\ \hline
    $8$ & 0.350 & \bf{24.582\dots} \\ \hline
    $9$ & 0.347 & \bf{28.467\dots} \\ \hline
    $10$ & 0.345 & \bf{32.459\dots}   \\
      \hline
  \end{tabular}
\end{center}
The improvements over bounds listed in Table D are rather subtle. Numerical experiments suggest that taking greater $\vartheta$ worsens the results because the rising gap between $J_{r,s}$ and $J^\flat_{r,s}$ takes from us more contribution to $\Upsilon^\flat_{k}(F_1)$ than the increasing $\vartheta$ can possibly offer. In order to use the full power of $GEH$ one needs to find a way around this issue. 

Conjecture \ref{CONJ} can also be used to predict the limits of Maynard's original method from \cite{MaynardK}. He used the identity (\ref{1.8}) instead of (\ref{1.11}), so $\vartheta_0 = \frac{1}{2} $ and $\vartheta = \frac{1}{4}$ is the optimal choice in this situation. We find close to best polynomials $F_1$ and $F_2$ (which are not written explicitly here) in the same way as these listed in Table C: 
\newpage
\begin{center}\label{wyniki}
\centering
\text{Table F.}
\vspace{1mm}
\\
  \begin{tabular}{ | C{0.7cm} |  C{3.2cm}  | C{3.2cm}  |  }
    \hline
       $k$ &   \footnotesize{True value of  $\Upsilon_k (F_1)$  foreseen by Conjecture \ref{CONJ} } &  \footnotesize{True value of  $\Upsilon_k (F_2)$  foreseen by Conjecture \ref{CONJ} } \\ \hline
    $3$ & 8.15617\dots & 8.15570\dots \\ \hline
    $4$ & 11.49648\dots & 11.49460\dots \\ \hline
    $5$ & 15.01694\dots & 15.01210\dots \\ \hline
    $6$ & 18.68736\dots &18.67814\dots \\ \hline
    $7$ & 22.48660\dots & 22.47174\dots \\ \hline
    $8$ & 26.39899\dots & 26.37743\dots \\ \hline
    $9$ &  30.41245\dots & 30.38333\dots \\ \hline
    $10$ & 34.51747\dots  & 34.47996\dots \\
      \hline
  \end{tabular}
\end{center}
Our heuristic predicts that Maynard's results from \cite{MaynardK} cannot be improved purely by his method. Moreover, even using multidimensional variation of his techniques does not make the situation much better. It is a good question whether it is possible to cross the $15$ barrier in the case $k=5$ relying only on better choice of polynomial than $F_2$, but without using the extended sieve support like in the next Subsection. It turns out that even if we choose the optimal polynomial $F$ of the form:
 \begin{multline*} c_{0} + c_{1}(1-P_1) + c_{11}(1-P_1)^2 + c_{2}P_2 + c_{111}(1-P_1)^3 +  c_{21}(1-P_1)P_2 + c_{3}P_3 \\
 + c_{1111}(1-P_1)^4 + c_{211}(1-P_1)^2P_2 + c_{22}P_2^2+ c_{31}(1-P_1)P_3  + c_4P_4 
\end{multline*}
with all the coefficients being real (so we can also consider negative values) then by Conjecture \ref{CONJ} we can expect only $\Upsilon_5 (F) = 15.01185\dots$. This discussion shows that the extended sieve support was necessary to prove the unconditional part of Theorem \ref{MAIN}.

Conjecture \ref{CONJ} also allows us to predict the optimal upper bounds on  ${\Omega}_k^{\text{\normalfont{ext}}} \left( \frac{1}{4}, \frac{3}{8} \right)$ incoming from polynomials $F$ of the form $a(1-P_1)^2+b(1-P_1)+c$. The results are listed below:

\begin{center}\label{wyniki}
\centering
\text{Table G.}
\vspace{1mm}
\\
  \begin{tabular}{ | C{0.7cm} |  C{3.2cm}  |  }
    \hline
       $k$ &    \footnotesize{True value of  $\Upsilon_k (F)$  foreseen by Conjecture \ref{CONJ} } \\ \hline
    $3$ & 7.85039\dots  \\ \hline
    $4$ & 11.27780\dots \\ \hline
    $5$ & 14.84799\dots  \\ \hline
    $6$ & 18.55495\dots  \\ \hline
    $7$ & 22.38582\dots  \\ \hline
    $8$ & 26.32852\dots \\ \hline
    $9$ & 30.37274\dots   \\ \hline
    $10$ & 34.50978\dots   \\
      \hline
  \end{tabular}
\end{center}
We can foresee that the extending of sieve support allows us to prove (\ref{glowne_szacowanie}) with $\rho_5=14$ and reprove the main result from \cite{3-tuples}, i.e. that  (\ref{glowne_szacowanie}) is true with $\rho_3=7$. 

The discussion in this subsection leads us to the conclusion that Theorem \ref{MAIN} cannot be improved only by optimizing parameters except possibly the cases $k=9,10$ under $GEH$. One needs to rely on some sort of new ideas in order to set new records. Perhaps, the technology developed in \cite{Polymath8} can be used to expand the sieve support even further and obtain new results.


\subsection{The upper bound for $\Omega_5^{\text{ext}} \left( \frac{1}{4} , \frac{3}{8}  \right) $}
For $k=5$ we put 
\begin{equation} F(t_1,t_2,t_3,t_4,t_5) = 
\begin{cases}
11 + 85(1-P_1) + 170(1-P_1)^2, & \text{if~} (t_1,t_2,t_3,t_4,t_5) \in \mathcal{R}'_5,  \\
0, & \text{otherwise, }
\end{cases}
\end{equation}
and take
\begin{equation}
\vartheta = \frac{1}{4},~~~~~~~\vartheta_0 = \frac{3}{8}. 
\end{equation}
Note that we have $F(t_1, t_2,t_3,t_4,t_5) = f(t_1+t_2+t_3+t_4+t_5)$ for $(t_1,t_2,t_3,t_4,t_5) \in \mathcal{R}'_5$, where 
\begin{equation}
f(x)=11+85(1-x)+170(1-x)^2.
\end{equation}
In the unextended variation of multidimensional Selberg sieve (i.e. when $\text{supp} \, (F) \subset \mathcal{R}_k$) this symmetry transforms our sieve into its one-dimensional analogue (see Remark in \cite[Section 6]{Maynard}). However, our function $F$ does not satisfy it for all points $(t_1,t_2,t_3,t_4,t_5) \in \mathbf{R}_{\geq 0}^5$. For this reason it should not be surprising that we are able to make a progress compared to what is achievable by a one-dimensional sieve. 

With our choices of $k$, $F$, $\vartheta$, and $\vartheta_0$ we shall calculate (\ref{4.12}) to get the desired result. The precise values of integrals appearing in this subsection are found by Mathematica 11.

\subsubsection{Calculating $J_{1,1}$}

For the sake of generality, we perform the calculations for an arbitrary $k$. We also assume that the symmetry $F(t_1, \dots ,t_k) = f(t_1+ \dots +t_k)$ is satisfied for $(t_1, \dots ,t_k) \in \mathbf{R}_{\geq 0}^k$, where $f \colon \mathbf{R}\rightarrow \mathbf{R}$ is a piecewise smooth function. We have
\begin{equation}\label{4.16}
 J_{1,1}=\frac{1- \vartheta_0 }{ \vartheta_0 } \idotsint \limits_{\mathcal{R}_{k-1}}  \left( \int \limits_0^{\rho (t_2, \dots ,t_k)}  f(t_1 + \dots + t_k) \, dt_1 \right)^2\, dt_2 \, \dots\, dt_k
\end{equation}
where $ \rho (t_2, \dots ,t_k) = \sup \{ t_1 \in \mathbf{R} \colon (t_1, \dots , t_k) \in \mathcal{R}'_k \}$. We observe that any permutation of the variables $t_2, \dots , t_k$ does not change the integrand. We also notice that $0\leq t_2 \leq \dots \leq t_k$ implies
\begin{equation}
 \rho (t_2, \dots ,t_k ) = 1- t_3 - \dots - t_k.
\end{equation}
Therefore, the outer integral from (\ref{4.16}) equals
\begin{equation}\label{4.18}
 (k-1)! \idotsint \limits_{\substack{ \mathcal{R}_{k-1} \\ 0\leqslant t_2 \leq \dots \leq t_k }}  \left( \int \limits_{\sum_{i=2}^k t_i}^{1+t_2} f(x)\, dx \right)^2\, dt_2 \, \dots\, dt_k
\end{equation}
We make a substitution $t = t_2 + \dots + t_k$ and transform the integral over $\mathcal{R}_{k-1}$ from expression (\ref{4.18}) into
\begin{equation}\label{4.19}\int  \limits_0^1 \idotsint \limits_{ 0\leqslant t_2 \leq \dots \leq t_{k-1} \leq t - \sum_{i=2}^{k-1} t_i }  \left( \int \limits_t^{1+t_2} f(x)\, dx \right)^2\, dt_2 \, \dots\, dt_{k-1} \, dt
\end{equation}
For the sake of clarity, we put $s$ in the place of $t_2$. We can rewrite (\ref{4.19}) as 
\begin{equation}\label{4.20}
\int  \limits_0^1  \int \limits_0^{\frac{t}{k-1}}  \left( \int \limits_t^{1+s} f(x)\, dx \right)^2
\left(  \int \limits_s^{\frac{t-s}{k-2}}  \int \limits_{t_3}^{\frac{t-s-t_3}{k-3}}  \cdots \int \limits_{t_{k-2}}^{\frac{t-s-t_3-\dots-t_{k-2}}{2}}  
   \, dt_{k-1} \, \dots\, dt_3 \right)\, ds\, dt
\end{equation}
By induction we calculate that the expression in the right-hand side parentheses from (\ref{4.20}) equals
\begin{equation}\label{4.21}
\frac{(t-(k-1)s)^{k-3}}{(k-2)!(k-3)!}.
\end{equation}
Combining (\ref{4.16})--(\ref{4.21}) we conclude that 
\begin{equation}\label{4.22}
 J_{1,1}=\frac{1- \vartheta_0 }{ \vartheta_0 (k-3)! }  \int  \limits_0^1  \int \limits_0^t  \left( \int \limits_t^{1+\frac{s}{k-1}} f(x)\, dx \right)^2   (t-s)^{k-3}\, ds\, dt .
\end{equation}
In the $k=5$ case the expression from (\ref{4.22}) equals
\begin{equation}\label{J1,1_wynik}
\frac{5}{6} \int  \limits_0^1  \int \limits_0^t  \left( \int \limits_t^{1+\frac{s}{4}} f(x)\, dx \right)^2
(t-s)^2\, ds\, dt > 9.4661240888 .
\end{equation}
\begin{remark}\label{1dimrem} Note that in the $\Omega_k$ case, when the function $F$ is supported on $\mathcal{R}_k$, we have $1-t_2-\dots-t_k$ instead of $\rho(t_2,\dots,t_k)$ in (\ref{4.16}). By perfoming the same procedures as in (\ref{4.16})--(\ref{4.22}) we get the same integral as in (\ref{4.22}) but with $1+s$ replaced by $1$ in the limit of integration. That would lead to 
\[  J_{1,1}=\frac{1- \vartheta_0 }{ \vartheta_0 }  \int  \limits_0^1   \left( \int \limits_t^{1} f(x)\, dx \right)^2 
\frac{t^{k-2}}{(k-2)!}\, dt \] 
which is known from \cite[(5.30)]{MaynardK}. The same observation applies to $J_0$ and the other $J_{r,s}$ integrals.
\end{remark}

\subsubsection{Calculating $J$}

We proceed as in the previous subsubsection. We have
\begin{equation}\label{4.24}
 J =  \idotsint \limits_{\mathcal{R}'_{k}}    f(t_1 + \dots + t_k)^2\, dt_1 \, \dots\, dt_k= 
 \idotsint \limits_{\mathcal{R}_{k-1}}  \int \limits_0^{\rho (t_2, \dots ,t_k)}  f(t_1 + \dots + t_k) ^2\, dt_1 \, \dots\, dt_k.
\end{equation}
Taking the lower expression from (\ref{4.24}) and proceeding like in (\ref{4.16})--(\ref{4.21}) we can easily get that in the $k=5$ case we have
\begin{equation}\label{J_wynik}
J =\frac{1}{  2 }  \int  \limits_0^1  \int \limits_0^t  \int \limits_t^{1+\frac{s}{4}} f(x)^2    (t-s)^2 \, dx\, ds\, dt >
14.3115286045.
\end{equation}

\subsubsection{Calculating $J_0$}
The function $F$ is assumed to be symmetric, so we have
\begin{equation}\label{4.26}
 J_0 =  \int \limits_0^{\vartheta_0/\vartheta} \frac{\vartheta_0 - \vartheta y}{\vartheta_0 y} \idotsint \limits_{\mathcal{R}'_{k}}    \left( F(t_1, \dots  ,t_k) - 
 F(t_1+y,t_2, \dots , t_k) \right)^2\, dt_1 \, \dots\, dt_k\, dy .
\end{equation}
The inner integral equals
\begin{multline}\label{4.27}
 \idotsint \limits_{\mathcal{R}_{k-1}} \int \limits_{0}^{1}    \left( F(t_1, \dots  ,t_k) - 
 F(t_1+y,t_2, \dots , t_k) \right)^2\, dt_1\, dt_2 \, \dots\, dt_k \\
=  \idotsint \limits_{\mathcal{R}_{k-1}} \int \limits_{0}^{\varpi (y, t_2, \dots, t_k)}    \left( f(t_1 + \dots + t_k) - 
 f(y + t_1+ \dots + t_k) \right)^2\, dt_1\, dt_2 \, \dots\, dt_k \\
+  \idotsint \limits_{\mathcal{R}_{k-1}} \int \limits_{\varpi (y, t_2, \dots, t_k)}^{\rho (t_2, \dots ,t_k)}   f ( t_1 + \dots  + t_k )^2\, dt_1\, dt_2 \, \dots\, dt_k =: \int_1 + \int_2,
 \end{multline}
where $\varpi (y, t_2, \dots, t_k) = \max \left( 0 , \sup \{ t_1 \in \mathbf{R} \colon (t_1+y,t_2, \dots , t_k) \in \mathcal{R}'_k \} \right)$. We have $\int_1=0$ for $y > 1$. Note that if $0 \leq t_2 \leq \dots \leq t_k $, then
\begin{equation}\label{4.28}
\varpi (y, t_2, \dots, t_k) = \max \left( 0,  1 - y - t_3 - \dots - t_k  \right).
\end{equation}
We again use the fact, that any permutation of $t_2, \dots , t_k$ does not change the integrand, so for $0 \leq y \leq 1$ it is true that $\int_1$ times $1/(k-1)!$ equals
\begin{multline}\label{4.29}
 \idotsint \limits_{\substack{ \mathcal{R}_{k-1} \\ 0 \leq t_2 \leq \dots \leq t_k}} \int \limits_{0}^{ \max \left( 0,  1 - y - t_3 - \dots - t_k  \right)}    \left( f(t_1 + \dots + t_k) - 
 f(y + t_1+ \dots + t_k) \right)^2\, dt_1\, dt_2 \, \dots\, dt_k \\ =
 \int  \limits_0^1 \idotsint \limits_{ 0\leqslant t_2 \leq \dots \leq t_{k-1} \leq t - \sum_{i=2}^{k-1} t_i } 
 \int \limits_{t}^{ \max \left( t,  1 - y + t_2  \right)} 
   \left( f(x) - 
 f(x+y) \right)^2\, dx\, dt_2\, dt_3 \, \dots\, dt_{k-1} \, dt  \\ =
  \int  \limits_0^1 \int \limits_0^{\frac{t}{k-1}}
\left(   \int \limits_{t}^{ \max \left( t,  1 - y +  s  \right)}  \left( f(x) -  f(x+y) \right)^2\, dx \right)
  \int \limits_s^{\frac{t-s}{k-2}}  \int \limits_{t_3}^{\frac{t-s-t_3}{k-3}}  \cdots \int \limits_{t_{k-2}}^{\frac{t-s-t_3-\dots-t_{k-2}}{2}}  
 \, dt_{k-1} \, \dots\, dt_3 \, ds\, dt \\ =
  \frac{1}{(k-1)!(k-3)!}  \int  \limits_0^1 \int \limits_0^t
  \int \limits_{t}^{ \max \left( t,  1 - y + \frac{s}{k-1}  \right)}  \left( f(x) -  f(x+y) \right)^2  
  (t-s)^{k-3}\, dx\, ds\, dt.
 \end{multline}

Let us move on to the $\int_2$ case. By the same argument as above, this integral times $1/(k-1)!$ equals
\begin{multline}\label{4.30}
 \idotsint \limits_{\substack{ \mathcal{R}_{k-1} \\ 0 \leq t_2 \leq \dots \leq t_k}} \int \limits_{ \max \left( 0,  1 - y - t_3 - \dots - t_k  \right)}^{1-t_3-\dots - t_k}    f(t_1 + \dots + t_k )^2\, dt_1\, dt_2 \, \dots\, dt_k  \\ 
=  \int  \limits_0^1 \idotsint \limits_{ 0\leqslant t_2 \leq \dots \leq t_{k-1} \leq t - \sum_{i=2}^{k-1} t_i } 
 \int \limits_{ \max \left( t,  1 - y + t_2  \right)}^{1+t_2} 
   f(x)^2\, dx\, dt_2\, dt_3 \, \dots\, dt_{k-1} \, dt  \\ 
=   \frac{1}{(k-1)!(k-3)!}  \int  \limits_0^1 \int \limits_0^t
  \int \limits_{ \max \left( t,  1 - y + \frac{s}{k-1}  \right)}^{1+\frac{s}{k-1}} f(x)^2  
  (t-s)^{k-3}\, dx\, ds\, dt.
  \end{multline}
Note that the expression above equals $J$ for $y>1$. 

From (\ref{4.26}) and (\ref{4.27}) we have
\begin{equation}\label{4.31}
\begin{gathered}
 J_0 =  \int \limits_0^{\vartheta_0/\vartheta} \frac{\vartheta_0 - \vartheta y}{\vartheta_0 y} \left( \int_1 + \int_2 \right)\, dy. 
\end{gathered}
\end{equation}
In the $k=5$ case we can write that the integral in the expression above equals
\begin{equation}
\frac{1}{2}\left( J_{0;1} + J_{0;2} \right),
\end{equation}
where
\begin{multline}
J_{0;1} =   \left(  \int  \limits_{0}^{\frac{1}{4}}   \int  \limits_{0}^{1-y} \int  \limits_{0}^{t}   \int  \limits_{t}^{1-y+\frac{s}{4}} ~+~
 \int  \limits_{\frac{1}{4}}^{1}     \int  \limits_{0}^{1-y} \int  \limits_{0}^{t}   \int  \limits_{t}^{1-y+\frac{s}{4}}~+~
   \int  \limits_{0}^{\frac{1}{4}} \int  \limits_{1-y}^{1} \int  \limits_{4t+4y-4}^{t}    \int  \limits_{t}^{1-y+\frac{s}{4}}  \right. \\
+ 
 \left.
   \int  \limits_{\frac{1}{4}}^{1}  \int  \limits_{1-y}^{\frac{4-4y}{3} } \int   \limits_{4t+4y-4}^{t}   \int  \limits_{t}^{1-y+\frac{s}{4}} \right)
 \frac{3 - 2 y}{3 y}   \left( f(x) -  f(x+y) \right)^2  
  (t-s)^2\, dx\, ds\, dt\, dy<
  11.3104037062 ,
  \end{multline}
  and
  \begin{multline}
J_{0;2} =   \left(   
     \int  \limits_{0}^{\frac{1}{4}}   \int  \limits_{0}^{1-y} \int  \limits_{0}^{t}   \int  \limits_{1-y+\frac{s}{4}}^{1+\frac{s}{4}} ~+~
     \int  \limits_{\frac{1}{4}}^{1}    \int  \limits_{0}^{1-y} \int  \limits_{0}^{t}   \int  \limits_{1-y+\frac{s}{4}}^{1+\frac{s}{4}} ~+~
       \int  \limits_{0}^{\frac{1}{4}}   \int  \limits_{1-y}^{1} \int  \limits_{4t+4y-4}^{t}    \int  \limits_{1-y+\frac{s}{4}}^{1+\frac{s}{4}}  \right. \\
 \left. +
   \int  \limits_{\frac{1}{4}}^{1}  \int  \limits_{1-y}^{\frac{4-4y}{3} } \int   \limits_{4t+4y-4}^{t}   \int  \limits_{t}^{1-y+\frac{s}{4}} ~+~
      \int  \limits_{\frac{1}{4}}^{1} \int  \limits_{\frac{4-4y}{3}}^{1} \int  \limits_{0}^{t}   \int  \limits_{t}^{1+\frac{s}{4}}
~+~ \int  \limits_{0}^{\frac{1}{4}} \int  \limits_{1-y}^{1} \int  \limits_{0}^{4t+4y-4}   \int  \limits_{t}^{1+\frac{s}{4}}          \right. \\
          \left.  
      ~+~      \int  \limits_{\frac{1}{4}}^{1}    \int  \limits_{1-y}^{\frac{4-4y}{3}} \int   \limits_{0}^{4t+4y-4}   \int  \limits_{t}^{1+\frac{s}{4}} ~+~
            \int  \limits_{1}^{\frac{3}{2}} \int  \limits_{0}^{1} \int  \limits_{0}^{t}   \int  \limits_{t}^{1+\frac{s}{4}}
          \right)
 \frac{3 - 2 y}{3 y}   f \left( x \right)^2  
  (t-s)^2\, dx\, ds\, dt\, dy <  20.2508453206 .
  \end{multline}
 A direct calculation shows that 
\begin{equation}\label{J0_wynik}
J_0 < 15.7806245134. 
\end{equation}

\subsubsection{Calculating $J_{2,1}$}

We have
\begin{equation}\label{4.35}
\begin{gathered}
J_{2,1}= \int \limits_0^{\frac{3}{2}} \frac{1-\vartheta_0 - \vartheta y}{\vartheta_0 y(1- \vartheta y)} 
 \idotsint \limits_{\mathcal{R}_{k-1}} \left( \int \limits_0^1 \left(  F(t_1, \dots , t_k) - F(t_1+y,t_2, \dots , t_k) \right)\, dt_1 \right)^2 dt_2 \, \dots\, dt_k\, dy .
\end{gathered} 
\end{equation}
The inner integral equals
\begin{equation}
\int \limits_0^y F(t_1, \dots , t_k)\, dt_1 =  \int \limits_0^{\max (y, \rho (t_2, \dots , t_k))} f(t_1 + \dots + t_k)\, dt_1  .
\end{equation}
Proceeding very much like in the $J_{1,1}$ case, we deduce that the integral over $\mathcal{R}_{k-1}$ from (\ref{4.35}) equals
\begin{equation}
 (k-1)! \int  \limits_0^1 \idotsint \limits_{ 0\leqslant t_2 \leq \dots \leq t_{k-1} \leq t - \sum_{i=2}^{k-1} t_i }  \left( \int \limits_t^{\max( t+y , 1+t_2 )} f(x)\, dx \right)^2\, dt_2 \, \dots\, dt_{k-1} \, dt.
\end{equation}
We put
\begin{equation}
\mbox{Int}_a^{b}(u) =  \int \limits_{\min (a , 1+u) }^{\max( b , 1+u )} f(x)\, dx.
\end{equation}
Continuing the reasoning from the $J_{1,1}$ case we get
\begin{equation}\label{4.39}
J_{2,1} = \frac{1}{(k-3)!} \int \limits_0^{\frac{3}{2}}
 \int  \limits_0^1  \int \limits_0^t  \frac{1-\vartheta_0 - \vartheta y}{\vartheta_0 y(1- \vartheta y)}  \left( \mbox{Int}_t^{t+y} \left( \frac{s}{k-1} \right) \right)^2   (t-s)^{k-3}\, ds\, dt\, dy  .
\end{equation}
We decompose the outer integral from (\ref{4.39}) as follows: 
\begin{equation}
 \int \limits_0^{\frac{3}{2}}  \int  \limits_0^1  \int \limits_0^t = \int \limits_{R_{2;1}} + \int \limits_{R_{2;2}},
\end{equation}
where
\begin{align}\label{4.41}
\begin{split}
R_{2;1} &= \{ (y,t,s) \in \mathbf{R}^3 \colon 0<y<\frac{3}{2} ,~ 0<t<1,~ 0<s<t,~  t+y > 1+\frac{s}{k-1}  \},  \\
R_{2;2} &= \{ (y,t,s) \in \mathbf{R}^3 \colon 0<y<\frac{3}{2} ,~ 0<t<1,~ 0<s<t,~  t+y < 1+\frac{s}{k-1}  \}.
\end{split}
\end{align}
We see that 
\begin{align}\label{4.42}
\begin{split}
\mbox{Int}_t^{t+y} \left( \frac{s}{k-1} \right) =   
\begin{cases}
\int \limits_{t }^{1 + \frac{s}{k-1}} f(x)\, dx  &\text{if}~~  (y,t,s) \in R_{2;1}, \\
\int \limits_{t }^{t+y} f(x)\, dx ~~ &\text{if}~~ (y,t,s) \in R_{2;2}.
\end{cases}
\end{split}
\end{align}
For $k=5$ we have 
\begin{multline}\label{4.43}
\int \limits_{R_{2;1}} = \left(  
\int \limits_1^{\frac{3}{2}}  \int  \limits_0^{1}  \int \limits_0^t  ~ + ~
 \int \limits_{\frac{1}{4}}^1  \int  \limits_{\frac{4-4y}{3}}^{1}  \int \limits_0^t  ~ + ~
  \int \limits_0^{\frac{1}{4}}  \int  \limits_{1-y}^1  \int \limits_0^{4t+4y-4}  ~ + ~
   \int \limits_{\frac{1}{4}}^1  \int  \limits_{1-y}^{\frac{4-4y}{3}}  \int \limits_0^{4t+4y-4}    \right)
    \frac{1-\vartheta_0 - \vartheta y}{\vartheta_0 y(1- \vartheta y)}\\
\times    \left(   \int \limits_{t }^{1+\frac{s}{4}} f(x)\, dx  \right)^2   (t-s)^{k-3}\, ds\, dt\, dy 
     > 15.2749404974
\end{multline}
and
\begin{multline}\label{4.44}
\int \limits_{R_{2;2}} = \left(  
\int \limits_0^1  \int  \limits_0^{1-y}  \int \limits_0^t  ~ + ~
  \int \limits_0^{\frac{1}{4}}  \int  \limits_{1-y}^1  \int \limits_{4t+4y-4}^t  ~ + ~
   \int \limits_{\frac{1}{4}}^1  \int  \limits_{1-y}^{\frac{4-4y}{3}}  \int \limits_{4t+4y-4}^t    \right)
    \frac{1-\vartheta_0 - \vartheta y}{\vartheta_0 y(1- \vartheta y)}  \\
\times      \left(   \int \limits_{t }^{t+y} f(x)\, dx  \right)^2   (t-s)^{k-3}\, ds\, dt\, dy 
     > 16.5050961382.
\end{multline}
Combining (\ref{4.43}) and (\ref{4.44}) we get
\begin{equation}\label{J2,1_wynik}
J_{2,1} > 15.8900183178.
\end{equation}

\subsubsection{Calculating $J_{3,1}$}

Let us define 
\begin{equation}\label{4.46}
\mathcal{A}_r':= \left\{  x\in [0,4]^{r-1} : 0 < x_1 < \dots < x_{r-1}< 4\vartheta_0 , ~\sum_{i=1}^{r-1} x_i <  \min \left( 4(1 - \vartheta_0), 4 - x_{r-1} \right)     \right\}. 
\end{equation}
Perfoming calculations analogous to the $J_{2,1}$ case we get
\begin{equation}\label{4.47}
\begin{gathered}
J_{3,1} = \frac{1}{(k-3)!} \iint  \limits_{\mathcal{A}_3'}  \int  \limits_0^1  \int \limits_0^t  
 \frac{1-\vartheta_0 - \vartheta (y+z)}{\vartheta_0 y z (1- \vartheta (y+z))}
 \left( \mbox{Int}_t^{t+z}   - \mbox{Int}_{t+y}^{t+y+z}  \right)^2   (t-s)^{k-3}\, ds\, dt\, dz\, dy,
\end{gathered}
\end{equation}
where we write $\mbox{Int}_a^{b}(u) $ instead of $\mbox{Int}_a^{b}\left( \frac{s-1}{k} \right)$ for the sake of clarity. We decompose the integral as follows:
\begin{equation}\label{4.48}
\begin{gathered}
\iint  \limits_{\mathcal{A}_3'}  \int  \limits_0^1  \int \limits_0^t  = \int \limits_{R_{3;1}} + \int \limits_{R_{3;2}} + \int \limits_{R_{3;3}} + \int \limits_{R_{3;4}},
\end{gathered}
\end{equation}
where
\begin{align} 
\begin{split}
R_{3;1} &= R_3 ~ \cap ~ \{ (y,z,t,s) \in \mathbf{R}^4 \colon  t < 1+\frac{s}{k-1} < t+z \},  \\
R_{3;2} &= R_3 ~ \cap ~ \{ (y,z,t,s) \in \mathbf{R}^4 \colon  t+z < 1+\frac{s}{k-1} < t+y \},  \\
R_{3;3} &= R_3 ~ \cap ~ \{ (y,z,t,s) \in \mathbf{R}^4 \colon  t+y < 1+\frac{s}{k-1} < t+y+z \},  \\
R_{3;4} &= R_3 ~ \cap ~ \{ (y,z,t,s) \in \mathbf{R}^4 \colon  t+y+z < 1+\frac{s}{k-1} \} 
\end{split}
\end{align}
with
\begin{equation}
R_3 =  \{ (y,z,t,s) \in \mathbf{R}^4 \colon 0<z<y<\frac{3}{2}, ~y+z<\frac{5}{2},~ 0<t<1,~ 0<s<t \}.
\end{equation}
We have
\begin{equation}\label{4.51}
 \mbox{Int}_t^{t+z}   - \mbox{Int}_{t+y}^{t+y+z} = 
\begin{cases}
  \int \limits_{t }^{1+\frac{s}{k-1}} f(x)\, dx, & \text{if~} (y,z,t,s)  \in R_{3;1}, \\
    \int \limits_{t }^{t+z} f(x)\, dx, & \text{if~} (y,z,t,s)  \in R_{3;2}, \\
    \left(  \int \limits_{t }^{t+z} - \int \limits_{t+y}^{1+\frac{s}{k-1}} \right) f(x)\, dx, & \text{if~} (y,z,t,s)  \in R_{3;3}, \\
    \left(  \int \limits_{t }^{t+z} - \int \limits_{t+y}^{t+y+z} \right) f(x)\, dx, & \text{if~} (y,z,t,s)  \in R_{3;4}. 
\end{cases}
\end{equation}
With (\ref{4.51}) we are able to decompose the integrals $\int_{R_{3;i}}$ into integrals with explicit limits. For $k=5$ and different cases $i=1,2,3,4$ we get $12, 24, 29, 10$ such integrals respectively which gives 75 of them in total. The results are
\begin{align}
\begin{split}
\int \limits_{R_{3;1}} &> 4.968 ,~~~~\int \limits_{R_{3;2}} > 12.158,   \\
\int \limits_{R_{3;3}} &> 4.227 ,~~~~\int \limits_{R_{3;4}} > 1.831 ,     
\end{split}
\end{align}
which implies
\begin{equation}\label{J3,1_wynik}
J_{3,1} > 11.592.
\end{equation}

\subsubsection{Calculating $J_{4,1}$}

We follow the steps from the last subsubsection. We have 
\begin{multline}\label{}
J_{4,1} = \frac{1}{(k-3)!} \iiint  \limits_{\mathcal{A}_4'}  \int  \limits_0^1  \int \limits_0^t  
 \frac{1-\vartheta_0 - \vartheta (y+z+w)}{\vartheta_0 y z w (1- \vartheta (y+z+w))} \\
\times  \left( \mbox{Int}_t^{t+w}   - \mbox{Int}_{t+z}^{t+z+w} 
- \mbox{Int}_{t+y}^{t+y+w}   + \mbox{Int}_{t+y+z}^{t+y+z+w}  \right)^2   (t-s)^{k-3}\, ds\, dt\, dw\, dz\, dy.
\end{multline}
We decompose 
\begin{equation}
\begin{gathered}
\iiint  \limits_{\mathcal{A}_4'}  \int  \limits_0^1  \int \limits_0^t  = \sum_{i=1}^{11} \int \limits_{R_{4;i}},
\end{gathered}
\end{equation}
where 
\begin{align} 
\begin{split}
R_{4;1} &= R_4 ~ \cap ~ \{ (y,z,w,t,s) \in \mathbf{R}^5 \colon  t < 1+\frac{s}{k-1} < t+w \},  \\
R_{4;2} &= R_4 ~ \cap ~ \{ (y,z,w,t,s) \in \mathbf{R}^5 \colon  t+w < 1+\frac{s}{k-1} < t+z \},  \\
R_{4;3} &= R_4 ~ \cap ~ \{ (y,z,w,t,s) \in \mathbf{R}^5 \colon  t+z < 1+\frac{s}{k-1} < t+z+w , ~ y>z+w \},  \\
R_{4;4} &= R_4 ~ \cap ~ \{ (y,z,w,t,s) \in \mathbf{R}^5 \colon  t+z+w < 1+\frac{s}{k-1} < t+y , ~ y>z+w \},  \\
\end{split}
\end{align}
\begin{align} 
\begin{split}
R_{4;5} &= R_4 ~ \cap ~ \{ (y,z,w,t,s) \in \mathbf{R}^5 \colon  t+y < 1+\frac{s}{k-1} < t+y+w , ~ y>z+w \},  \\
R_{4;6} &= R_4 ~ \cap ~ \{ (y,z,w,t,s) \in \mathbf{R}^5 \colon  t+z < 1+\frac{s}{k-1} < t+y  , ~ y<z+w \}, \\
R_{4;7} &= R_4 ~ \cap ~ \{ (y,z,w,t,s) \in \mathbf{R}^5 \colon  t+y < 1+\frac{s}{k-1} < t+z+w , ~ y<z+w \},  \\
R_{4;8} &= R_4 ~ \cap ~ \{ (y,z,w,t,s) \in \mathbf{R}^5 \colon  t+z+w < 1+\frac{s}{k-1} < t+y+w , ~ y<z+w \},  \\
R_{4;9} &= R_4 ~ \cap ~ \{ (y,z,w,t,s) \in \mathbf{R}^5 \colon  t+y+w < 1+\frac{s}{k-1} < t+y+z \},  \\
R_{4;10} &= R_4 ~ \cap ~ \{ (y,z,w,t,s) \in \mathbf{R}^5 \colon t+y+z < 1+\frac{s}{k-1} < t+y+z+w \},  \\
R_{4;11} &= R_4 ~ \cap ~ \{ (y,z,w,t,s) \in \mathbf{R}^5 \colon t+y+z+w < 1+\frac{s}{k-1} \}
\end{split}
\end{align}
with
\begin{equation}
R_4 =  \{ (y,z,w,t,s) \in \mathbf{R}^5 \colon 0<w<z<y<\frac{3}{2}, ~y+z+w<\frac{5}{2},~ 0<t<1,~ 0<s<t \}.
\end{equation}
The expression $ \left( \mbox{Int}_t^{t+w}   - \mbox{Int}_{t+z}^{t+z+w} - \mbox{Int}_{t+y}^{t+y+w}   + \mbox{Int}_{t+y+z}^{t+y+z+w} \right)$ equals
\begin{align*}
 &\int \limits_{t }^{1+\frac{s}{k-1}} f(x)\, dx,  ~~~~ &
& \int \limits_{t }^{t+w} f(x)\, dx , ~~~~ \\
&\left(  \int \limits_{t }^{t+w} - \int \limits_{t+z}^{1+\frac{s}{k-1}} \right) f(x)\, dx, ~~~~&
&\left(  \int \limits_{t }^{t+w} - \int \limits_{t+z}^{t+z+w} \right) f(x)\, dx, \\
&\left(  \int \limits_{t }^{t+w} - \int \limits_{t+z}^{t+z+w} - \int \limits_{t+y}^{1+\frac{s}{k-1}} \right) f(x)\, dx,~~~~&
&\left(  \int \limits_{t }^{t+w} - \int \limits_{t+z}^{1+\frac{s}{k-1}}  \right) f(x)\, dx, \\
&\left(  \int \limits_{t }^{t+w} - \int \limits_{t+z}^{1+\frac{s}{k-1}} - \int \limits_{t+y}^{1+\frac{s}{k-1}} \right) f(x)\, dx,~~~~ &
&\left(  \int \limits_{t }^{t+w} - \int \limits_{t+z}^{t+z+w} - \int \limits_{t+y}^{1+\frac{s}{k-1}} \right) f(x)\, dx, \\
&\left(  \int \limits_{t }^{t+w} - \int \limits_{t+z}^{t+z+w} - \int \limits_{t+y}^{t+y+w} \right) f(x)\, dx, ~~~~&
&\left(  \int \limits_{t }^{t+w} - \int \limits_{t+z}^{t+z+w} - \int \limits_{t+y}^{t+y+w} + \int \limits_{t+y+z}^{1+\frac{s}{k-1}} \right) f(x)\, dx, \\
&\left(  \int \limits_{t }^{t+w} - \int \limits_{t+z}^{t+z+w} - \int \limits_{t+y}^{t+y+w} +  \int \limits_{t+y+z}^{t+y+z+w} \right) f(x)\, dx  \\
\end{align*}
for $(y,z,w,t,s)$ belonging to $R_{4;1}, \dots , R_{4;11}$ respectively. For $k=5$ this allows us to decompose the integrals $\int_{R_{4;i}}$ into integrals with explicit limits and there are $1337$ of them in total. The results are

\begin{align} 
\begin{split}
\int \limits_{R_{4;1}} &> 0.392 ,~~~~\int \limits_{R_{4;2}} > 1.538 , ~~~~\int \limits_{R_{4;3}} > 0.851 ,\\
\int \limits_{R_{4;4}} &> 0.608 ,~~~~\int \limits_{R_{4;5}} > 0.093 ,~~~~ \int \limits_{R_{4;6}} > 0.410 , \\
\int \limits_{R_{4;7}} &> 0.307 ,~~~~\int \limits_{R_{4;8}} > 0.073 ,~~~~\int \limits_{R_{4;9}} > 0.066 ,   \\
\int \limits_{R_{4;10}} &> 0.017 , ~~~\int \limits_{R_{4;11}} > 0.010 
\end{split}
\end{align}
so the overall contribution satisfies
\begin{equation}\label{J4,1_wynik}
J_{4,1} > 4.365.
\end{equation}

\subsubsection{Calculating $J_{2,2}$ and $J_{3,2}$}

Proceeding as in the $J_{1,1}$ case we calculate
\begin{align}\label{J2,2_wynik}
J_{2,2} &= \int \limits_{\frac{3}{2}}^2 \frac{1-2\vartheta_0 }{\vartheta_0 y(1- \vartheta y)}
 \idotsint \limits_{\substack{ \mathcal{R}_{k-1}}} \left( \int \limits_0^1 \left(  F(t_1, \dots , t_k)  \right)\, dt_1 \right)^2\, dt_2 \, \dots\, dt_k\, dy  \\
&=  \frac{1}{(k-3)!} \int \limits_{\frac{3}{2}}^2
 \int  \limits_0^1  \int \limits_0^t  \frac{1-2\vartheta_0 }{\vartheta_0 y(1- \vartheta y)}  \left( \int_t^{1+\frac{s}{k-1}} f(x)\, dx \right)^2   (t-s)^{k-3}\, ds\, dt\, dy  > 
 1.9342154969. \nonumber
\end{align}

Let us move our attention to the $J_{3,2}$ case. Define
\begin{align}
\begin{split}
\mathcal{A}_r'' = 
 \Bigg\{  x\in [0,4]^{r-1} \colon 0<x_1<\dots<x_{r-s} \leq 4\vartheta_0 < x_{r-s+1} < \dots \\
 < x_{r-1}, ~\sum_{i=1}^{r-1} x_i < \min \left( 4( 1 -  \vartheta_0 ), 4 - x_{r-1} \right)     \Bigg\}.
\end{split}
\end{align}
We have
\begin{equation}
\begin{gathered}
J_{3,2} = \frac{1}{(k-3)!} \iint  \limits_{\mathcal{A}_3''}  \int  \limits_0^1  \int \limits_0^t  
 \frac{1- 2\vartheta_0 - \vartheta z}{\vartheta_0 y z (1- \vartheta (y+z))}
 \left( \mbox{Int}_t^{t+z}   - \mbox{Int}_{t+y}^{t+y+z}  \right)^2   (t-s)^{k-3}\, ds\, dt\, dz\, dy ,
\end{gathered} 
\end{equation}

Note that $\mbox{Int}_{t+y}^{t+y+z}$ vanishes because $y>3/2$. We again decompose the integral in order to simplify the integrand. 

\begin{equation}\label{4.57'}
\begin{gathered}
\iint  \limits_{\mathcal{A}_3''}  \int  \limits_0^1  \int \limits_0^t  = \int \limits_{R_{3;1}} + \int \limits_{R_{3;2}},
\end{gathered}
\end{equation}
where
\begin{align} 
\begin{split}
R_{3;1}' &= R_3' ~ \cap ~ \{ (y,z,t,s) \in \mathbf{R}^4 \colon  t < 1+\frac{s}{k-1} < t+z \},  \\
R_{3;2}' &= R_3' ~ \cap ~ \{ (y,z,t,s) \in \mathbf{R}^4 \colon  t+z < 1+\frac{s}{k-1}  \}
\end{split}
\end{align}
with
\begin{equation}
R_3' =  \{ (y,z,t,s) \in \mathbf{R}^4 \colon 0<z<\frac{3}{2}<y, ~y+z<\frac{5}{2},~2y+z<4,~ 0<t<1,~ 0<s<t \}.
\end{equation}
We get
\begin{equation}\label{4.59'}
 \mbox{Int}_t^{t+z}   - \mbox{Int}_{t+y}^{t+y+z} = 
\begin{cases}
  \int \limits_{t }^{1+\frac{s}{k-1}} f(x)\, dx, & \text{if~} (y,z,t,s)  \in R_{3;1}', \\
    \int \limits_{t }^{t+z} f(x)\, dx, & \text{if~} (y,z,t,s)  \in R_{3;2}', \\
\end{cases}
\end{equation}
Therefore, for $k=5$ we have
\begin{multline}
\int \limits_{R_{3;1}'} = 
 \left(  
\int \limits_{\frac{3}{2}}^{\frac{15}{8}} \int \limits_{\frac{1}{4}}^{4-2y} \int  \limits_{\frac{4-4z}{3}}^{1}  \int \limits_0^t  ~ + ~
\int \limits_{\frac{3}{2}}^{\frac{15}{8}} \int \limits_{0}^{\frac{1}{4}} \int  \limits_{1-z}^{1}  \int \limits_{0}^{4t+4z-4}  ~ + ~
\int \limits_{\frac{15}{8}}^{2} \int \limits_{0}^{4-2y} \int  \limits_{1-z}^{1}  \int \limits_{0}^{4t+4z-4}  ~ + ~
\int \limits_{\frac{3}{2}}^{\frac{15}{8}} \int \limits_{\frac{1}{4}}^{4-2y} \int  \limits_{1-z}^{\frac{4-4z}{3}}  \int \limits_0^{4t+4z-4}    \right)  \\
     \frac{1- 2\vartheta_0 - \vartheta z}{\vartheta_0 y z (1- \vartheta (y+z))} 
     \left(   \int \limits_{t }^{1+\frac{s}{4}} f(x)\, dx  \right)^2   (t-s)^{k-3}\, ds\, dt\, dz\, dy 
      > 0.347 , 
      \end{multline}
\begin{multline}
\int \limits_{R_{3;2}'} = \left( 
\int \limits_{\frac{3}{2}}^{\frac{15}{8}} \int \limits_{0}^{4-2y} \int  \limits_{0}^{1-z}  \int \limits_0^t  ~ + ~
\int \limits_{\frac{15}{8}}^{2} \int \limits_{0}^{4-2y} \int  \limits_{0}^{1-z}  \int \limits_0^t  ~ + ~ 
\int \limits_{\frac{3}{2}}^{\frac{15}{8}} \int \limits_{0}^{\frac{1}{4}} \int  \limits_{1-z}^{1}  \int \limits_0^{4t+4z-4}   \right.
 \\  
\left. +~
\int \limits_{\frac{15}{8}}^{2} \int \limits_{0}^{4-2y} \int  \limits_{1-z}^{1}  \int \limits_0^{4t+4z-4}  ~ + ~
\int \limits_{\frac{3}{2}}^{\frac{15}{8}} \int \limits_{\frac{1}{4}}^{4-2y} \int  \limits_{1-z}^{\frac{4-4z}{3}}  \int \limits_0^{4t+4z-4} 
 \right)   \frac{1- 2\vartheta_0 - \vartheta z}{\vartheta_0 y z (1- \vartheta (y+z))}  \\
\times      \left(   \int \limits_{t }^{t+z} f(x)\, dx  \right)^2   (t-s)^{k-3}\, ds\, dt\, dz\, dy 
> 1.564 ,
\end{multline}
which gives
\begin{equation}\label{J3,2_wynik}
J_{3,2} > 0.9555 .
\end{equation}

\subsection{Proof of the unconditional part of Theorem \ref{MAIN}}

We combine (\ref{4.12}), (\ref{J1,1_wynik}), (\ref{J_wynik}), (\ref{J0_wynik}), (\ref{J2,1_wynik}), (\ref{J3,1_wynik}), (\ref{J4,1_wynik}), (\ref{J2,2_wynik}), (\ref{J3,2_wynik}) and prove that
\begin{equation}\label{4.73'}
\begin{gathered}
 \Upsilon_5^{\text{ext}} \left( F; \frac{1}{4}, \frac{3}{8} \right) \leq 
 \frac{5 \left( J_0 - \frac{1}{4} \left( J_{1,1} + J_{2,1}+J_{3,1}+J_{4,1}+J_{2,2}+J_{3,2} \right)  \right)}{J} + \frac{40}{3} < 14.98582.
\end{gathered}
\end{equation}
Hence, equation (\ref{Oeps}) implies
\begin{equation}
\widetilde{\Upsilon}_5^{\text{ext}} \left(F; \frac{1}{4} - \epsilon, \frac{3}{8}, \epsilon \right) < 14.98582 + O (\epsilon ).
\end{equation}
The inequatility above combined with Theorem \ref{tw_J} implies the first part of Theorem \ref{MAIN}. 

\begin{remark}
Taking $k=3$ and 
\begin{equation*} F(t_1,t_2,t_3) = 
\begin{cases}
256 + 819(1-P_1) + 833(1-P_1)^2, & \text{if~} (t_1,t_2,t_3) \in \mathcal{R}'_3,  \\
0, & \text{otherwise, }
\end{cases}
\end{equation*}
 we can perform analogous calculations in order to show that
\[
\widetilde{\Upsilon}_3^{\text{ext}} \left( F; \frac{1}{4} - \epsilon, \frac{3}{8}, \epsilon \right) < 7.928 + O (\epsilon ). \]
This reproves the main theorem from \cite{3-tuples}.
\end{remark}

\bibliographystyle{amsplain}


\end{document}